\setlist[enumerate]{leftmargin=.5in,label={\textup{(\roman*)}}}
\setlist[itemize]{leftmargin=.5in}
\crefname{hypothesis}{Hypothesis}{Hypotheses}
\pgfplotsset{compat=newest}
\pgfplotsset{plot coordinates/math parser=false}
\newlength\figureheight
\newlength\figurewidth
\edef\crefstripprefixinfo{#1}(#2#1#3}%
\crefstripprefix{\crefstripprefixinfo}{#1}#3)}%
\crefstripprefix{\crefstripprefixinfo}{#1}#3}%
\crefstripprefix{\crefstripprefixinfo}{#1}#3)}
\DeclareFontFamily{OMX}{MnSymbolE}{}
\DeclareSymbolFont{MnLargeSymbols}{OMX}{MnSymbolE}{m}{n}
\DeclareFontShape{OMX}{MnSymbolE}{m}{n}{
    <-6>  MnSymbolE5
   <6-7>  MnSymbolE6
   <7-8>  MnSymbolE7
   <8-9>  MnSymbolE8
   <9-10> MnSymbolE9
  <10-12> MnSymbolE10
  <12->   MnSymbolE12
}{}
\DeclareFontShape{OMX}{MnSymbolE}{b}{n}{
    <-6>  MnSymbolE-Bold5
   <6-7>  MnSymbolE-Bold6
   <7-8>  MnSymbolE-Bold7
   <8-9>  MnSymbolE-Bold8
   <9-10> MnSymbolE-Bold9
  <10-12> MnSymbolE-Bold10
  <12->   MnSymbolE-Bold12
}{}
\let\llangle\@undefined
\let\rrangle\@undefined
\DeclareMathDelimiter{\llangle}{\mathopen}%
                     {MnLargeSymbols}{'164}{MnLargeSymbols}{'164}
\DeclareMathDelimiter{\rrangle}{\mathclose}%
                     {MnLargeSymbols}{'171}{MnLargeSymbols}{'171}
\newtheorem{example}{Example}
\newcommand{\figref}[1]{\textup{Fig.~\ref{#1}}}
\newcommand{\teqref}[1]{\textup{Eq.~(\ref{#1})}}
\newcommand{\secref}[1]{\textup{Section~\ref{#1}}}
\newcommand{\thmref}[1]{\textup{Theorem~\ref{#1}}}
\newcommand{\corref}[1]{\textup{Corollary~\ref{#1}}}
\def\ie{\emph{i.e.}}
\def\eg{\emph{e.g.}}
\def\resp{resp.}
\def\RR{\mathbb{R}}
\def\ZZ{\mathbb{Z}}
\DeclareSymbolFont{bbold}{U}{bbold}{m}{n}
\DeclareSymbolFontAlphabet{\mathbbold}{bbold}
\def\im {\operatorname{im}}
\def\ker{\operatorname{ker}}
\newcommand{\grad}{\mathop{\mathrm{grad}}\nolimits}
\newcommand{\curl}{\mathop{\mathrm{curl}}\nolimits}
\renewcommand{\div}{\mathop{\mathrm{div}}\nolimits}
\newcommand{\LD}{\mathop{\mathscr{L}}\nolimits}
\def\ip{\iota}
\def\bowB{\wideparen{B}}
\def\bowmu{\wideparen{\mu}}
\def\bowg{\wideparen{g}}
\def\bowstar{\mathbin{\wideparen{\star}}}
\def\barB{\overline{B}}
\def\barmu{\overline{\mu}}
\def\barg{\overline{g}}
\def\barstar{\mathbin{\overline{\star}}}
\def\bareta{\overline{\eta}}
\title{Force-Free Fields are Conformally Geodesic
}
\author{
  Albert Chern%
\thanks{
    University of California, San Diego, 9500 Gilman Dr, MC 0404 La Jolla, CA 92093-0404
  (\email{alchern@ucsd.edu}).}
\and
  Oliver Gross%
  \thanks{
  Technische Universit{\"a}t Berlin, 
  Stra\ss e des 17. Juni 136, 10623, Berlin, Germany 
  (\email{ogross@math.tu-berlin.de}
  ).}
  }
\begin{document}

\maketitle

\begin{abstract}
In this paper, we establish an equivalence between force-free fields and conformally geodesic fields, and between harmonic fields and conformally eikonal fields. In contrast to previous work, our approach and equivalence results generalize to arbitrary dimension \(n\geq 3\). In accordance with three-dimensional theory, our defining equations emerge as the Euler-Lagrange equations of hierarchies of variational principles---distinguished by the topological constraints they impose---and retain the known inclusions of the special cases from each other. Specifically, we relate stationary points of hierarchies of \(L^2\) \resp\@ \(L^1\)-optimization problems by a conformal change of metric, provide an explicit construction of the conformal factors relating the relevant metrics and identify the field lines of physical vector fields fields as conformal geodesics. 
\end{abstract}

\begin{keywords}
Conformal geometry, geodesics, force-free fields, Beltrami fields, \(L^2\)-optimization, \(L^1\)-optimization
\end{keywords}

\begin{MSCcodes}
53A30, 
53C65, 
53C12, 
53C22, 
58E30, 
53C80 
\end{MSCcodes}


\section{Introduction}\label{sec:Introduction}
Geometric structures are key structural motifs in a multitude of natural systems ranging from molecules and polymers to the field lines of fluid flows or electromagnetic fields in plasma. Therefore, understanding these structures is an important task in both mathematics and the natural sciences~\cite{cantarella1999topological, GM1999:GCT,Yeates_2014, Yeates:2018:GNP, moffatt1969degree, Moffatt2014PNAS}. In the study of variational problems for field lines that foliate a space, there are two thoroughly explored yet relatively disjoint pillars of focuses. The first, arising naturally in plasma physics and hydrodynamics, concerns force-free fields, while the second, particularly relevant in Riemannian geometry and optical physics, involves field lines as geodesics. Remarkably, we demonstrate that these seemingly distinct classes of flux fields share a direct relationship within the framework of conformal geometry.

In three dimensions, force-free fields, equivalently referred to as Beltrami fields, are vector fields \(B\) satisfying \((\curl B)\times B = 0\) and \(\div B = 0\).  In plasma physics, these force-free fields correspond to magnetic fields that produce zero Lorentz force.  They are extensively investigated in solar physics and controlled fusion since they constitute static plasma states with negligible pressure~\cite{Priest:2014:MHD, bellan2018magnetic, Yeates:2018:GNP}. In the realm of fluid dynamics, force-free fields are known as Beltrami velocity fields and constitute special steady solutions to the incompressible Euler equations~\cite{arnold2008topological}.  Force-free fields also include harmonic fields (\(\curl B = 0\), \(\div B = 0\)) as a significant subclass that plays central roles in vacuum electromagnetism, hydrodynamics, and the general theory of vector fields.

On the other hand, geodesic foliations are characterized by vector fields whose integral curves form geodesics \cite{Deshmukh:2020:GVF}. These foliations describe optical paths according to Fermat's principle. A special subclass of geodesic vector fields consists of gradients of distance functions, termed \emph{eikonal fields}.  These fields correspond to solutions to Beckmann optimal transport problems \cite{Santambrogio:2015:OT,Brezis:2019:PP, Deshmukh:2019:GVF}, untwisted light fields with applications in caustic designs \cite{Schwartzburg:2014:HCC}, and calibrated forms in calibrated geometry \cite{Harvey:1982:CG}. 

A natural question to ask is: \emph{``given a vector field on a manifold, does there exists a Riemannian metric such that the field lines form a geodesic foliation?''} Necessary and sufficient conditions for an affirmative answer have been given in, \eg, \cite{Gluck:79:OL, gluck2006dynamical} or \cite{sullivan1978foliation}. So-called \emph{geodesible} vector fields have been studied in numerous contexts. For example, they are of interest in the context of adaptions of the \emph{Seifert conjecture} or \emph{Weinstein conjecture} and relate to, \eg, Reeb vector fields on contact manifolds, stable Hamiltonian structures or Beltrami fields~\cite{Etnyre:2000:CTH, Rechtman2009UAD, Cieliebak2015, cardona2021geometry}. 

A generalized concept of geodesic fields is the notion of \emph{conformally geodesic fields} \cite{Fialkow:1939:CG,Fialkow:1942:CTC,Dunajski:2021:VPC}, which are fields that become geodesic after some conformal change of metric. Conformal geodesic fields can depict optical paths in a medium with a non-uniform index of refraction~\cite{Schwartzburg:2014:HCC}. We refer to~\cite{Gross:2024:CGIMHD} for an in depth overview over how all of these fields are related.
\begin{remark}
     Some authors use the term \emph{conformal geodesic} for vector fields which satisfy \(\nabla_X X = f X\) for some scalar function \(f\), \ie, whose integral curves are geodesics up to reparametrization. However, we follow the notion of conformal geodesic coined by, \eg, Fialkow \cite{Fialkow:1939:CG} as fields whose integral curves are geodesic after a conformal change of ambient metric, rather than merely along the integral curve. 
\end{remark}

The main results of this paper are equivalence theorems between the two classes:
\begin{theorem}
    \label{thm:MainTheorem1}
    Force-free fields are conformally geodesic.
\end{theorem}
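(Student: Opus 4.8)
The plan is to pass from the three-dimensional vector-calculus formulation to a metric-independent one in terms of differential forms, extract the fact that a force-free field has \emph{gradient} acceleration, and then absorb that gradient into a conformal factor. Write $\beta := B^\flat$ for the $1$-form metrically dual to $B$. In dimension three the condition $(\curl B)\times B = 0$ is exactly $\iota_B\,d\beta = 0$, and this is the formulation I would adopt as the definition of force-free in arbitrary dimension $n\geq 3$ (together with $\div B = 0$, which however is \emph{not} needed for this theorem). The single identity driving everything is
\begin{equation}
  (\nabla_B B)^\flat = \iota_B\,d\beta + \tfrac12\,d\lvert B\rvert^2,
  \label{eq:plan-key}
\end{equation}
valid on any Riemannian manifold; it follows from Cartan's formula $\LD_B\beta = \iota_B d\beta + d\,\iota_B\beta$ together with $\iota_B\beta = \lvert B\rvert^2$ and a one-line check in normal coordinates. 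In three dimensions \eqref{eq:plan-key} is the classical identity $(B\cdot\nabla)B = \tfrac12\grad\lvert B\rvert^2 - B\times\curl B$.

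Feeding the force-free condition $\iota_B d\beta = 0$ into \eqref{eq:plan-key} gives $\nabla_B B = \tfrac12\grad\lvert B\rvert^2$, so the acceleration of a force-free field is a pure gradient. On the open set where $B\neq 0$ I would normalize to the unit field $X := B/\lvert B\rvert$ and set $\psi := \ln\lvert B\rvert$. Using $\nabla_B B = \lvert B\rvert\grad\lvert B\rvert$ and the Leibniz rule, a direct computation yields
\begin{equation}
  \nabla_X X = \grad\psi - (X\psi)\,X = (\grad\psi)^\perp,
  \label{eq:plan-acc}
\end{equation}
so the geodesic curvature of the field lines is precisely the component of $\grad\ln\lvert B\rvert$ orthogonal to $X$. (That $\nabla_X X\perp X$ is automatic, since $\lvert X\rvert\equiv 1$.)

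It remains to annihilate this orthogonal component by a conformal change. Under $\tilde g = e^{2\phi}g$ the Levi-Civita connection transforms as $\tilde\nabla_Y Z = \nabla_Y Z + (Y\phi)Z + (Z\phi)Y - g(Y,Z)\grad\phi$, so for the $g$-unit field $X$,
\begin{equation}
  \tilde\nabla_X X = \nabla_X X + 2(X\phi)\,X - \grad\phi.
  \label{eq:plan-conf}
\end{equation}
Choosing the explicit conformal factor $e^{2\phi} = \lvert B\rvert^2$, i.e.\ $\phi = \psi = \ln\lvert B\rvert$, and substituting \eqref{eq:plan-acc} into \eqref{eq:plan-conf} makes the two $\grad\psi$ terms cancel and leaves $\tilde\nabla_X X = (X\psi)\,X$, parallel to $X$. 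Hence the field lines of $B$ are unparametrized geodesics of $\tilde g = \lvert B\rvert^2 g$; reparametrizing by $\tilde g$-arclength turns the foliation into a genuine geodesic foliation, which is exactly the Fialkow notion of conformally geodesic from the Remark above.

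The conceptual content sits in a single cancellation, so the real work lies in the setup and the caveats. The principal obstacle is fixing the correct $n$-dimensional avatar of the force-free condition: the cross product and $\curl$ are three-dimensional, so I must argue that $\iota_B d\beta = 0$ is the right generalization and that \eqref{eq:plan-key} holds verbatim in every dimension (the normal-coordinate computation settles this, but it is the crux linking the physics to the geometry). A second, genuinely technical point is the degeneracy of $\lvert B\rvert^2 g$ at zeros of $B$: the statement should be read on $\{B\neq 0\}$, where the field lines live anyway, and I would either restrict there or note that $\lvert B\rvert^2$ extends as a nonnegative function with the metric degenerating only along the singular set. Everything else---the transformation law \eqref{eq:plan-conf} and the cancellation---is routine.
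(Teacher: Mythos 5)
Your proof is correct, and it rests on the same two pillars as the paper's argument: the identity $\iota_B dB^\flat=(\nabla_BB)^\flat-\tfrac12 d|B|^2$ (this is exactly \cref{thm:WadsleyLemma}, which you rederive via Cartan's formula) and the conformal factor $|B|^2$, which is the paper's $\barg=|\beta|^2_{\bowg}\,\bowg$ since $|\beta|_{\bowg}=|\bowB|_{\bowg}$. The execution, however, is genuinely different. You work at the level of the Levi-Civita connection: force-freeness gives $\nabla_BB=\tfrac12\grad|B|^2$, the $g$-unit field $X$ has geodesic curvature $(\grad\ln|B|)^\perp$, and the transformation law of $\nabla$ under $\tilde g=e^{2\phi}g$ with $\phi=\ln|B|$ cancels exactly that term, leaving $\tilde\nabla_XX\parallel X$. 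The paper never invokes the connection transformation law; instead it characterizes geodesic flux forms by $\iota_B d\eta=0$ for a normalization $\eta$ of $\star\beta$ (\cref{thm:GeodesicIffNormalizationProp}, itself a consequence of \cref{thm:WadsleyLemma}) and exploits the conformal weight of the Hodge star on $1$-forms: under $\barg=|\beta|^2_{\bowg}\,\bowg$ one has $\bowstar\beta=|\barB|^{-1}_{\barg}\barstar\beta$, so the force-free equation $\iota_{\bowB}d\bowstar\beta=0$ in the old metric \emph{is} verbatim the geodesic equation in the new one. That form-level route is what lets the paper prove all three equivalences of \cref{thm:MainTheorem1Formal} at once (force-free/harmonic/exact harmonic versus geodesic/eikonal/exact eikonal), in both directions, and to interpret them variationally via $\|\beta\|^2_{L^2,\bowg}=\|\beta\|_{L^1,\barg}$, i.e.\ as the correspondence between $L^2$- and $L^1$-stationary points under metric-independent constraints (\cref{thm:L2ToL1}). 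Your connection-level computation is more elementary and self-contained, and it makes the geometric mechanism transparent (force-free fields have pure-gradient acceleration, which a conformal factor can absorb); what it does not give by itself is the converse direction or the harmonic/eikonal refinements and their optimal-transport interpretation. Your two side remarks are also consistent with the paper: closedness of $\beta$ (divergence-freeness of $B$) is indeed not needed for this implication, and the degeneracy of $|B|^2g$ at zeros of $B$ is handled there exactly as you propose, by restricting to the support of the flux form.
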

\begin{theorem}
    \label{thm:MainTheorem2}
    Harmonic fields are conformally eikonal.
\end{theorem}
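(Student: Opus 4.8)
The plan is to exploit that harmonic fields form a subclass of force-free fields, so that \thmref{thm:MainTheorem1} already supplies a conformal metric rendering the field lines geodesic; the remaining task is to upgrade ``geodesic'' to ``eikonal,'' i.e.\ to show that the resulting geodesic unit field is in fact the gradient of a (local) distance function. Throughout I work in arbitrary dimension $n\geq 3$ by passing to the associated $1$-form $\beta \coloneqq B^{\flat}$, under which $\curl B = 0$ becomes $d\beta = 0$, $\div B = 0$ becomes $d\star\beta = 0$, and the Beltrami condition $(\curl B)\times B=0$ becomes $\iota_{B}\,d\beta = 0$. Since $d\beta=0$ forces $\iota_B\,d\beta=0$, every harmonic field is force-free, and the construction below reuses the very conformal factor produced in \thmref{thm:MainTheorem1}.

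First I would fix the conformal metric $\tilde g \coloneqq \lvert B\rvert_g^{2}\,g$ (assuming $B$ nonvanishing, so that $\tilde g$ is a genuine metric away from the zero set of $B$) and record how it interacts with the field. A direct computation gives the $\tilde g$-unit field in the direction of $B$ as $T \coloneqq B/\lvert B\rvert_g^{2}$, with $\lvert T\rvert_{\tilde g}=1$ and, crucially, $T^{\flat_{\tilde g}} = \tilde g(T,\cdot) = \beta$. Recalling the standard identity $(\nabla^{\tilde g}_{T}T)^{\flat_{\tilde g}} = \iota_{T}\,d\,T^{\flat_{\tilde g}}$ for a $\tilde g$-unit field, and using that $d$ is metric-independent, the geodesic criterion $\iota_{T}\,d\,T^{\flat_{\tilde g}}=0$ reduces to $\iota_{B}\,d\beta=0$, which is exactly the force-free condition; this is the step I would carry over verbatim from \thmref{thm:MainTheorem1}.

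The genuinely new input is the curl-free hypothesis. Since $B$ is harmonic, $d\beta = 0$, so $T^{\flat_{\tilde g}}=\beta$ is closed and hence locally exact, $\beta = du$. Then $T = (du)^{\sharp_{\tilde g}} = \nabla^{\tilde g} u$ is a $\tilde g$-gradient, and combined with $\lvert T\rvert_{\tilde g}=1$ this says precisely that $u$ solves the eikonal equation $\lvert \nabla^{\tilde g} u\rvert_{\tilde g}=1$. Differentiating this relation in an arbitrary direction and invoking the symmetry of the Hessian of $u$ yields $\nabla^{\tilde g}_{T}T = 0$, so the integral curves are $\tilde g$-geodesics and $u$ is locally a $\tilde g$-signed distance function; thus the normalized field $T$ is conformally eikonal, as claimed.

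I expect the main obstacle to lie not in the local computation but in the passage from closed to exact: a globally defined potential $u$, and hence a bona fide distance function, exists only when $\beta$ represents the trivial de Rham class, which is exactly where the topological constraints and the $L^{1}$ variational characterization emphasized in the introduction must enter. Secondary care is needed at the zeros of $B$, where $\tilde g$ degenerates, and in making the form-theoretic translations of $\curl$, $\div$, and the eikonal and geodesic conditions rigorous and sign-consistent across all $n\geq 3$.
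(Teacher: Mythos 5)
Your proof is correct and is essentially the paper's own argument: the paper uses the same conformal factor \(\barg = |\beta|_{\bowg}^2\,\bowg\) and the same key identity, namely that under this change the original (closed) covector field becomes the unit covector field of the direction field --- in the paper's flux-form language, \(\bowstar\beta = |\barB|_{\barg}^{-1}\barstar\beta\) (\cref{eq:HodgeBetaTransformation}), so \(d\bowstar\beta = 0\) says precisely that \(\barstar\beta\) admits a \emph{closed} normalization, i.e.\ \(\beta\) is eikonal with respect to \(\barg\) (\cref{thm:MainTheorem1Formal}). Your opening detour through \cref{thm:MainTheorem1} is unnecessary (closedness of \(T^{\flat_{\tilde g}} = B^\flat\) is all that is needed), but harmless.

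One correction: the ``main obstacle'' you flag at the end is not an obstacle for this theorem. In the paper, \emph{eikonal} is defined (\cref{def:Eikonal}) by the existence of a \emph{closed} normalization of \(\star\beta\); a global potential, and hence a bona fide distance function, is demanded only for the finer notion of \emph{exact eikonal}, which is paired with \emph{exact harmonic} rather than harmonic fields (part (iii) of \cref{thm:MainTheorem1Formal}, proved via the Hodge--Morrey--Friedrichs decomposition in \cref{thm:VariationalExactHarmonic}). So your argument is already complete at the point where you observe that \(T^{\flat_{\tilde g}} = B^\flat\) is closed; the de Rham class of \(B^\flat\) only determines which level of the hierarchy (eikonal vs.\ exact eikonal) one lands in, exactly mirroring harmonic vs.\ exact harmonic on the \(L^2\) side. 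Your secondary concern about zeros of \(B\) is legitimate and is handled in the paper by working with normalizations (subdifferentials) and restricting to the support of \(\beta\).
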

These theorems can be expressed as statements about field lines on an \(n\)-dimensional conformal manifold. 
In the absence of a specific metric, field lines are merely represented by a closed \((n-1)\)-form \(\beta\) (equivalently, a 1-current), referred to as a flux form.%

 \begin{remark} We acknowledge that magnetic fields in dimensions other than 3 should remain as 2-forms instead of \((n-1)\)-forms, since they arise as the curvature of a connection in the context of \(U(1)\) gauge theory~\cite[Sec. 10.5.1]{nakahara2018geometry}.  However, this paper's primary focus is on flux forms that describe field lines foliating a space.
 \end{remark}
 Each metric within the conformal class enables a vector field representation of the flux form, as well as examinations of metric-dependent qualities such as being geodesic or being force-free. \cref{thm:MainTheorem1,thm:MainTheorem2} assert that \emph{a flux form admits a metric with respect to which it is force-free (\resp\@ harmonic) if and only if it admits a (possibly different, but conformally equivalent) metric with respect to which it is geodesic (\resp\@ eikonal).} Our result extends previous results (see, \eg, \cite{Etnyre:2000:CTH, rechtman_2010}) in the sense that we can establish an explicit relation between the \resp\@ relevant metrics and in contrast to previous approaches (see, \eg, \cite{cardona2021geometry}) our definitions and results generalize to higher dimensions.

There are several significant implications from the equivalence theorems.  

\subsection{Structures in Steady Euler Flow}
Steady Euler flows are governed by \(\nabla_BB + \grad p = 0\), which can be rewritten as \((\curl B)\times B + \grad b = 0\), where \(B\) is the divergence-free velocity field, \(p\) is the fluid pressure, and \(b = p + \frac{1}{2}|B|^2\) is the Bernoulli pressure.
In 1965, V.I.~Arnold \cite{Arnold:2014:TES} provided a sequence of structural theorems that describe the increasing complexity in a steady Euler flow. When \(\grad b\neq 0\), the fluid domain is decomposed into finitely many cells fibered into invariant tori or annuli given by the level sets of the Bernoulli pressure \(b\).  The flow lines generated by \(B\) are either all closed or all dense on each invariant surface.  When \(\grad b = 0\), the Bernoulli level sets no longer exist, and we obtain a Beltrami field \((\curl B)\times B = 0\), implying \(\curl B = \lambda B\) for some scalar function \(\lambda\).  By taking the divergence, we get \((\grad \lambda)\mathrel{\bot} B\), which implies that the flow \(B\) can still admit invariant surfaces given by the level sets of \(\lambda\), provided that \(\grad \lambda\neq 0\). 

\begin{figure}[h!]
    \centering
    \includegraphics[width = \textwidth]{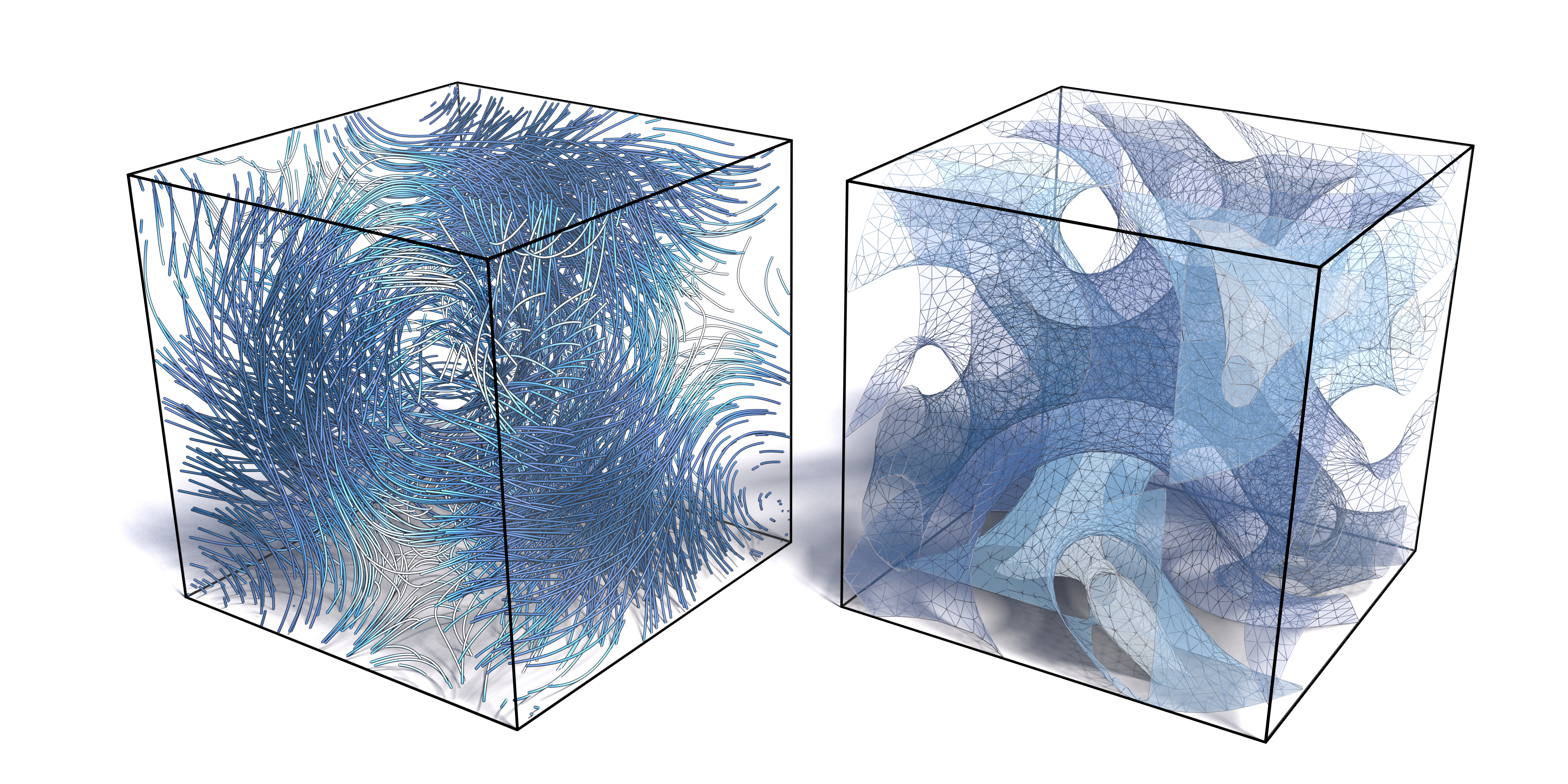}
    \caption{Left: Field lines of the ABC-flow \cref{eq:ABCFlow} for \(A=B=C=1\). Right: Levelsets of the squared magnitude of the flow field which, taken as a conformal factor, makes the field lines geodesic (\cref{thm:ABDCorollary}).}
    \label{fig:ABCFlow}
\end{figure}
If \(\lambda\) is a constant, then the flow lines for \(B\) become chaotic.  A popular example of such a Beltrami field with constant \(\lambda\) is the \emph{Arnold–Beltrami–Childress flow (ABC-flow, \cref{fig:ABCFlow})}, which  (on the three dimensional torus \((\nicefrac{\RR}{2\pi})^3\)) satisfies 
\begin{align}
    \label{eq:ABCFlow}
    \begin{bmatrix}
        \dot x \\ \dot y \\ \dot z
    \end{bmatrix}=\begin{bmatrix}
        A\sin(z) + C\cos(y) \\ B\sin(x) + A\cos(z) \\ C\sin(y) + B\cos(x)
    \end{bmatrix}
\end{align}
for parameters \(A,B,C\in\RR\) and is known to exhibit chaotic streamlines.  For a survey of this topic we refer the reader to \cite[Ch.~2.1]{arnold2008topological} and \cite{enciso2016beltrami, Berger:2023:SEF}.
\begin{corollary}\label{thm:ABDCorollary}
    ABC-flows are conformally geodesic.
\end{corollary}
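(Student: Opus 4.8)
The plan is to deduce the corollary directly from \cref{thm:MainTheorem1} by exhibiting the ABC-flow as a force-free field on the flat torus. Concretely, I would equip \((\nicefrac{\RR}{2\pi})^3\) with the standard Euclidean metric, so that \(\curl\) and \(\div\) carry their usual coordinate expressions, and then verify that the vector field \(B\) in \cref{eq:ABCFlow} satisfies the two defining equations of a Beltrami field, namely \(\curl B = \lambda B\) with \(\lambda\) constant and \(\div B = 0\). Once this is in place, \cref{thm:MainTheorem1} applies verbatim and the conclusion follows; the entire content of the corollary is that the specific field \cref{eq:ABCFlow} belongs to the class covered by the main theorem.

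The verification is a short direct computation that exploits the designed structure of \cref{eq:ABCFlow}: each component \(B_i\) depends only on the two coordinates other than its own, and is an \(A,B,C\)-weighted combination of one sine and one cosine. First I would compute \(\div B = \partial_x B_1 + \partial_y B_2 + \partial_z B_3\) and observe that every summand vanishes identically, since \(B_1\) is independent of \(x\), \(B_2\) of \(y\), and \(B_3\) of \(z\). Next I would compute \(\curl B\) componentwise; the derivatives combine, with signs, so that \((\curl B)_i = B_i\) for each \(i\), whence \(\curl B = B\). Thus \(B\) is force-free with the constant eigenvalue \(\lambda = 1\), consistent with \(\grad\lambda = 0 \perp B\).

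With \(B\) identified as a force-free field, \cref{thm:MainTheorem1} supplies a conformal factor rendering its field lines geodesic; by the explicit construction underlying the theorem (cf.\ \cref{fig:ABCFlow}) this factor is the squared magnitude \(|B|^2\) measured in the Euclidean metric. The only genuine obstacle beyond the routine curl computation is that the symmetric ABC-flow is \emph{not} nowhere-vanishing: for \(A=B=C=1\) the point \((\tfrac{3\pi}{4},\tfrac{3\pi}{4},\tfrac{3\pi}{4})\) is a stagnation point, where \(|B|^2 = 0\) and the conformal factor degenerates, so \(|B|^2 g\) fails to be a metric there. I would therefore state the conclusion on the open dense set \(\{B\neq 0\}\), where the field lines genuinely foliate and \(|B|^2 g\) is a bona fide metric in the Euclidean conformal class; the integral curves then form a geodesic foliation of this region, with the stagnation locus excluded as the natural singular set of the foliation.
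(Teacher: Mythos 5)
Your proposal is correct and follows essentially the same route as the paper: verify that the ABC field is divergence-free with \(\curl B = B\) (hence force-free/Beltrami with constant \(\lambda=1\)), then invoke \cref{thm:MainTheorem1} with the conformal factor \(|B|^2\), exactly as indicated in the caption of \cref{fig:ABCFlow}. Your restriction to the open set \(\{B\neq 0\}\) to handle the stagnation points of the symmetric case is also consistent with the paper's own treatment, which confines the theory to the support of the flux form in the remark on flux forms with non-global support.
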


\subsection{Solar Coronal Loops}
The solar atmosphere is filled with magnetic fields that form arches connecting positive and negative surface magnetic fluxes.  In more active regions of sun's surface, the magnetic fields concentrate into strong and often twisted flux ropes connecting sunspots.  These flux ropes are generally modeled by force-free magnetic fields.  In quiet regions of the solar surface, the magnetic fields are relaxed to harmonic fields.  A popular model for a harmonic magnetic field in the solar atmosphere is known as the \emph{potential-field source surface} model.  In particular, one observes an absence of twisted magnetic fields in these quiet regions as the twists have been resolved through dissipative reconnection events over a longer period of relaxation time~\cite{Yeates2020}.
\begin{figure}[h]
    \centering
    \includegraphics[width = .85\textwidth]{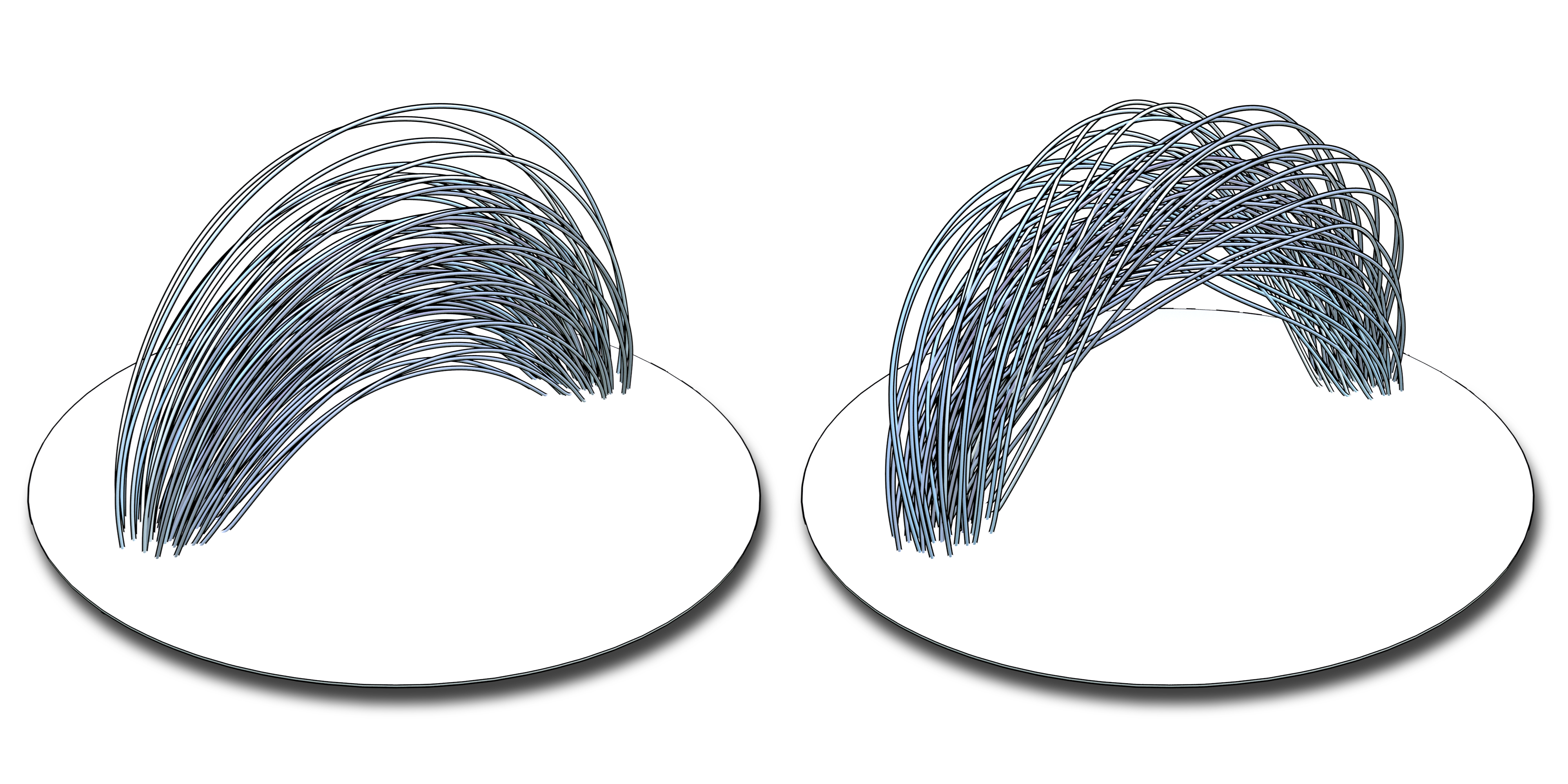}
    \caption{In a static equilibrium and with negligible gas pressure, the magnetic field lines of coronal loops as observed in the solar corona constitute geodesic foliations. In contrast to the twisted case (right), the untwisted case (left) additionally realize the Beckmann optimal transportation plan from the source flux density to sink flux density on the solar surface.}
    \label{fig:SolarLoops}
\end{figure}

Our \cref{thm:MainTheorem1,thm:MainTheorem2} allow precise characterizations of the distinction between active flux ropes and quiet harmonic fields in terms of geodesics and optimal transports.  The flux ropes consist of conformal geodesics connecting pairs of source and sink on the solar surface.  The relaxed harmonic fields, on the other hand, are conformal eikonal fields which not only comprise geodesics but also form source--sink pairings as the Beckmann (1-Wasserstein, earth-mover) optimal transportation plan from the source flux density to sink flux density~\cite{Santambrogio:2015:OT}. 

\begin{corollary}
    Potential-field models of the solar corona yield magnetic field lines that are conformally Beckmann optimal transportation paths between the magnetic sources and sinks on the sun's surface.  The more general force-free magnetic fields are conformally geodesic foliations whose topological connectivity between the source and sink ends is constrained (\cref{fig:SolarLoops}).
\end{corollary}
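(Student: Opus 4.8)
The plan is to read off both assertions from \cref{thm:MainTheorem1,thm:MainTheorem2} together with the variational identification of eikonal fields with Beckmann optimal transport recalled in the introduction. For the first assertion I would model the potential-field source surface configuration as a harmonic flux form $\beta$ on the coronal domain, i.e. a region $M$ bounded below by (a patch of) the solar surface $\partial M$, where the normal component of $\beta$ along $\partial M$ prescribes the surface flux and its positive and negative parts define the source density $\mu_+$ and sink density $\mu_-$. Applying \cref{thm:MainTheorem2} produces a conformal factor $e^{2\varphi}$ such that, in the rescaled metric $\tilde g = e^{2\varphi} g$, the flux is eikonal, meaning the dual vector field is the gradient $\grad_{\tilde g} u$ of a $\tilde g$-distance function with $|\grad_{\tilde g} u|_{\tilde g} = 1$.

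The second, and main, step of the first assertion is to promote ``eikonal'' to ``Beckmann optimal.'' I would invoke Kantorovich--Beckmann duality: the $1$-Wasserstein (earth-mover) problem of transporting $\mu_+$ to $\mu_-$ across $M$ admits the flux formulation $\min\{\int_M |J|_{\tilde g}\,d\mathrm{vol}_{\tilde g} : \div_{\tilde g} J = \mu_+-\mu_-\}$, whose dual maximizes $\int u\,d(\mu_+-\mu_-)$ over potentials with $|\grad_{\tilde g} u|_{\tilde g}\le 1$. The eikonal structure handed to us by \cref{thm:MainTheorem2} supplies exactly a competitor that is feasible for the dual (the distance function saturates the Lipschitz bound) and whose gradient is parallel to the flux; the complementary-slackness (calibration) identity $\langle J,\grad_{\tilde g}u\rangle_{\tilde g} = |J|_{\tilde g}$ then certifies simultaneous optimality of $\beta$ and $u$. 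Hence the field lines of $\beta$, reparametrized as $\tilde g$-geodesics, are precisely the optimal transport rays, which is the claim after undoing the conformal change.

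For the second assertion I would apply \cref{thm:MainTheorem1} directly: a force-free $\beta$ becomes geodesic in a conformally rescaled metric, so its field lines form a geodesic foliation whose leaves connect source to sink ends on $\partial M$. The remaining content is the word ``constrained.'' Here I would point to the hierarchy of variational principles underlying the two theorems: the harmonic/eikonal case is the unconstrained $L^1$ (Beckmann) problem, whose minimizer realizes the optimal source--sink pairing, whereas the force-free case arises from the same $L^1$-type functional subject to an additional topological (helicity / homological linking) constraint that forbids the twist from being relaxed away. Consequently the force-free geodesic foliation is not free to choose the optimal matching but must respect the prescribed connectivity class, which is exactly the constrained optimal-transport picture asserted.

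The step I expect to be the genuine obstacle is the duality/calibration argument of the second paragraph, specifically verifying the complementary-slackness identity on a domain with boundary and with a possibly non-smooth Kantorovich potential: one must ensure that the eikonal field produced by \cref{thm:MainTheorem2} coincides with a genuine maximizer of the dual problem (not merely a critical point), which requires controlling the behaviour of $u$ on its cut locus and checking that the prescribed boundary flux matches the transport marginals $\mu_\pm$. Translating the abstract conformal-geodesic statements of the theorems into the boundary-value setting of the coronal domain---so that ``source and sink ends on the sun's surface'' are rigorously the marginals---is the accompanying technical point.
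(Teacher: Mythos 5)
Your proposal is correct and follows the same skeleton as the paper, which states this corollary without a separate proof: both assertions are read off from \cref{thm:MainTheorem1,thm:MainTheorem2} (in their precise form, \cref{thm:MainTheorem1Formal}), and the distinction between the untwisted (optimal-transport) and twisted (connectivity-constrained) cases comes from the hierarchy of variational principles. The genuine difference lies in how the step ``eikonal \(\Rightarrow\) Beckmann optimal'' is justified. You do it by an explicit Kantorovich--Beckmann duality and complementary-slackness argument; the paper instead gets it from its \(L^1\) hierarchy (\cref{thm:VariationalExactEikonal,thm:VariationalEikonal,thm:VariationalTwistedGeodesic}) combined with \cref{thm:L2ToL1}: the \(L^1\) functional is convex and the admissible set affine, so the stationary points characterized there are automatically global minimizers, and the identification of that minimization problem with Beckmann transport is a citation (Santambrogio). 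As a consequence, the obstacle you single out---controlling the Kantorovich potential on its cut locus and upgrading a dual critical point to a maximizer---dissolves in the paper's formulation: ``eikonal'' is defined (\cref{def:Eikonal,def:Normalization}) via the existence of a closed normalization \(\eta\in\partial|\beta|\) of \(\star\beta\), not via a globally defined distance function, so no regularity discussion is needed; weak duality plus your calibration identity \(\langle\eta\,|\,\beta\rangle = |\beta|\) already certifies optimality. Two smaller points of precision: the potential-field source surface model is \emph{exact} harmonic, hence by \cref{thm:MainTheorem1Formal} exact eikonal, corresponding to the \emph{unconstrained} \(L^1\) problem---the genuine Beckmann problem---whereas merely harmonic forms correspond to the homologically constrained problem, a distinction that is not vacuous on the topologically nontrivial coronal shell; and the ``constraint'' in the force-free case is, in the paper's mechanism, not a helicity functional but the restriction to isotopy variations \(\mathring\beta = -\LD_\xi\beta\) (\cref{thm:VariationalTwistedGeodesic}), which is exactly what fixes the source--sink connectivity of the field lines.
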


One can conversely explore non-eikonal geodesic foliations and draw analogies from the phenomena in solar flux ropes.  For example, one can connect a source and destination density by a bundle of geodesics with an overall twist.  The bundle becomes untwisted when the connectivity is the optimal transport (\cref{fig:HyperboloidTransport}). 
\begin{figure}[h]
    \centering
    \includegraphics[width = .85\textwidth]{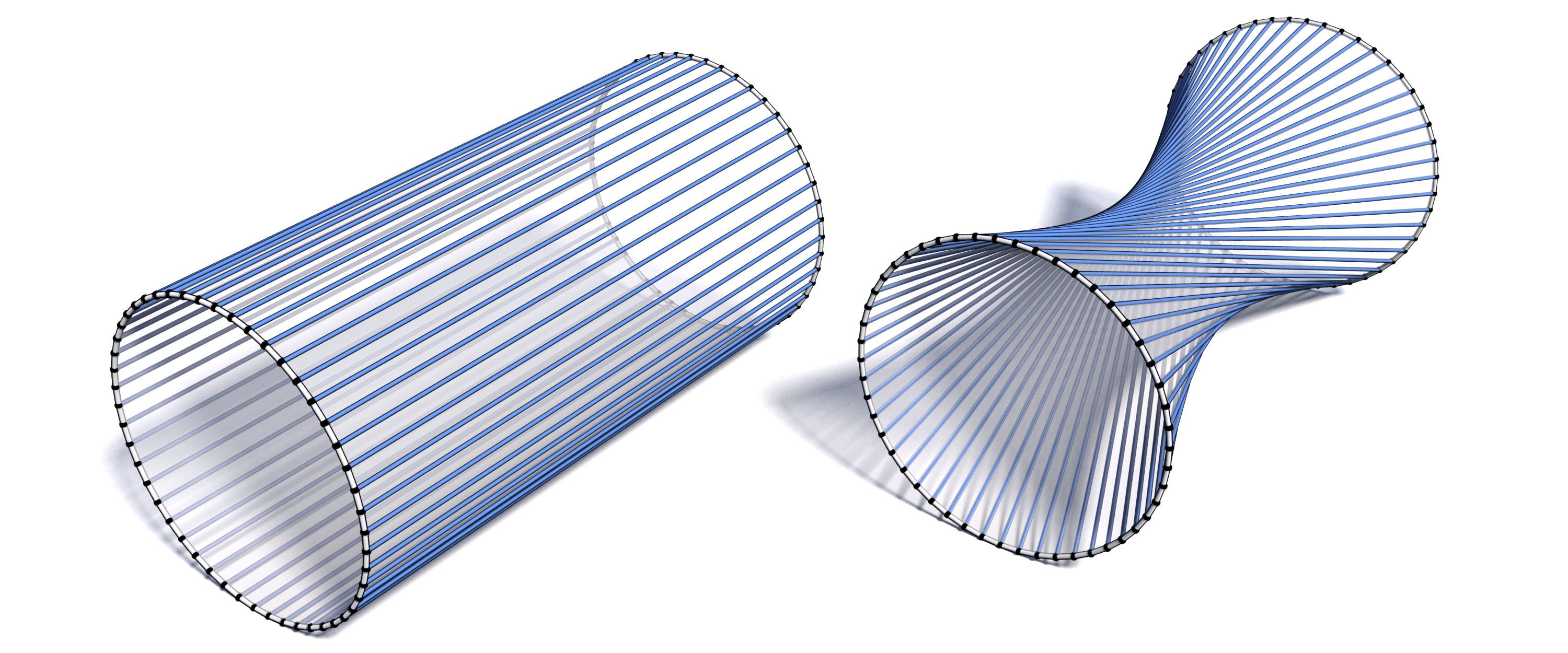}
    \caption{Left: Eikonal geodesic foliation realizing a Beckmann optimal transport plan. Right: \emph{Twisted geodesic foliation} with constrained connectivity between source and sink endpoints.}
    \label{fig:HyperboloidTransport}
\end{figure}

\section{Flux Forms in Riemannian Geometry}
\label{sec:FluxFormsInRiemannianGeometry}
Let \(M\) be a compact and oriented \(n\)-dimensional Riemannian manifold possibly with boundaries and \(\beta\in\Omega^{n-1}(M)\) a closed \((n-1)\)-form, \ie\@ \(d\beta=0\), which satisfies \(j^\ast_{\partial M}\beta = \beta_{\partial M}\) for a given boudnary condition \(\beta_{\partial M}\in\Omega^{n-1}(\partial M)\). We will refer to \(\beta\) as a \emph{flux form} and denote the Riemannian metric of \(M\) by \(g\) as well as the induced volume form, \emph{Hodge star} and norm by \(\mu\), \(\star\) and \(|\cdot|\) respectively. The Riemannian structure induces a norm on \(k\)-forms which, for \(\omega\in\Lambda^kT_p^\ast(M)\), is defined by
\begin{align}
    |\omega|^2\coloneqq \star(\omega\wedge(\star\omega)).
\end{align}
Moreover, from the non-degenerate pairing \[\langle\cdot|\cdot\rangle\colon \Lambda^kT_p^\ast(M)\times\Lambda^{(n-k)}T_p^\ast(M)\mapsto\RR,\, (\eta,\omega)\mapsto\star(\eta\wedge\omega)\] we have an isomorphism
\(\Lambda^kT_p^\ast(M)\cong\Lambda^{(n-k)}T_p^\ast(M)\).

A flux form together with a metric give rise to a \emph{vector field} \(B\in\Gamma TM\) \emph{associated to the flux form}  which is defined by 
\begin{align}
    \label{eq:AssocitatedVectorField}
    \iota_B\mu=\beta,
\end{align}
 where \(\iota\) denotes the \emph{interior product}.

\subsection{Force-Free and (Exact) Harmonic Flux Forms}
Our investigations focus on fields whose Lorentz-force \((\curl B)\times B\) vanishes. With help of the vector calculus identity \((\curl B)\times B = \nabla_BB - \tfrac{1}{2}\grad |B|^2\) we can free ourselves from the dimensional restrictions on the \(\curl\)-operator and the cross product and express this property in arbitrary dimensions. Moreover, physical forces are favorably expressed as \(1\)-forms~\cite{mackay_2020}, which suggests that we are interested in fields lines for which the \(1\)-form
\[(\nabla_BB)^\flat - \tfrac{1}{2}d|B|^2 \in\Omega^1(M)\]
vanishes. Here, \((\cdot)^\flat\) denotes the \emph{musical isomorphism} which turns a vector field \(X\in\Gamma TM\) into a \(1\)-form \(X^\flat(\cdot) = g(X,\cdot)\in\Omega^1(M)\). In the \(3\)-dimensional case, it is interpreted as the Lorentz-force, while for higher dimensions this physical picture is no longer valid. Nonetheless we will see that the corresponding fields are always co-linear with their \(\curl\) (whenever  this is reasonably defined) and therefore indeed capture an essential property of these special fields. 

\begin{lemma}\label{thm:WadsleyLemma}
	A vector field \(X\in\Gamma TM\) on a Riemannian manifold \(M\) satisfies 
	\begin{align}\label{eq:WadsleyFormula}
		\iota_XdX^\flat = (\nabla_XX)^\flat - \tfrac{1}{2}d|X|^2.
	\end{align}
\end{lemma}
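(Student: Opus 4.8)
The plan is to route everything through the Lie derivative by Cartan's magic formula and then unwind it using the two defining properties of the Levi-Civita connection. First I would apply Cartan's formula to the $1$-form $X^\flat$, writing
\[
\mathcal{L}_X X^\flat = \iota_X d X^\flat + d\big(\iota_X X^\flat\big).
\]
The exact term is immediate: since $\iota_X X^\flat = X^\flat(X) = g(X,X) = |X|^2$, we get $d(\iota_X X^\flat) = d|X|^2$. Rearranging, the asserted identity \eqref{eq:WadsleyFormula} becomes equivalent to the single reduced statement
\[
\mathcal{L}_X X^\flat = (\nabla_X X)^\flat + \tfrac{1}{2}\,d|X|^2,
\]
and this is the identity I would actually establish.

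To prove the reduced identity I would evaluate both sides on an arbitrary vector field $Y$ and invoke metric compatibility and vanishing torsion. Expanding the Lie derivative of a $1$-form gives
\[
(\mathcal{L}_X X^\flat)(Y) = X\big(g(X,Y)\big) - g\big(X,[X,Y]\big).
\]
Metric compatibility rewrites the first term as $g(\nabla_X X, Y) + g(X,\nabla_X Y)$, while the torsion-free identity $[X,Y] = \nabla_X Y - \nabla_Y X$ rewrites the second as $g(X,\nabla_X Y) - g(X,\nabla_Y X)$. The two occurrences of $g(X,\nabla_X Y)$ cancel, leaving
\[
(\mathcal{L}_X X^\flat)(Y) = g(\nabla_X X, Y) + g(X,\nabla_Y X).
\]

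Finally I would identify the residual term as half an exact differential: metric compatibility gives $Y(|X|^2) = 2\,g(X,\nabla_Y X)$, so that $g(X,\nabla_Y X) = \tfrac{1}{2}(d|X|^2)(Y)$, while $g(\nabla_X X, Y) = (\nabla_X X)^\flat(Y)$ by definition of the flat operator. Since $Y$ is arbitrary, this yields the reduced identity and hence \eqref{eq:WadsleyFormula}. There is no deep obstacle here—the whole computation is a direct manipulation—but the step that carries the content, and where I would be careful with the bookkeeping, is the cancellation: the torsion-free hypothesis is precisely what removes the $g(X,\nabla_X Y)$ terms, and metric compatibility is precisely what turns the remaining symmetric term into an exact differential. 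The coordinate-free argument above is cleanest; as a sanity check one could reproduce the same cancellation in geodesic normal coordinates centered at a point, where the connection coefficients vanish and both sides reduce to transparent first-order expressions.
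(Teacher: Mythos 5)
Your proof is correct, and every step checks out: the Cartan formula reduction, the expansion of the Lie derivative on a test field $Y$, the cancellation via torsion-freeness, and the identification $g(X,\nabla_Y X) = \tfrac{1}{2}(d|X|^2)(Y)$ via metric compatibility. However, it takes a genuinely different route from the paper. The paper never introduces the Lie derivative: it works with the identity vector-valued $1$-form $I \in \Omega^1(M;TM)$, observes that torsion-freeness is exactly the statement $d^\nabla I = 0$, and from this reads off the explicit formula $dX^\flat = g(\nabla X \wedge I)$, i.e.\ $dX^\flat(Y,Z) = g(\nabla_Y X, Z) - g(\nabla_Z X, Y)$; contracting this $2$-form with $X$ then gives the claim in one line. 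So the paper computes $dX^\flat$ directly and contracts, whereas you avoid ever computing $dX^\flat$ by routing through $\mathcal{L}_X X^\flat$ and Cartan's magic formula. Both arguments consume the same two hypotheses (vanishing torsion and metric compatibility), but they are deployed differently: the paper's version is more compact and yields the reusable identity $dX^\flat = g(\nabla X\wedge I)$ as a byproduct, at the cost of requiring the exterior covariant derivative calculus for vector-valued forms; yours is more elementary and self-contained, needing only the standard formulas for $\mathcal{L}_X$ acting on functions and $1$-forms, and makes visible exactly where each hypothesis enters (torsion-freeness kills the $g(X,\nabla_X Y)$ terms, compatibility produces the exact term $\tfrac{1}{2}d|X|^2$).
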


\begin{proof}
Denoting the identity vector-valued 1-form \(I\in \Omega^1(M;TM)\), \(I(X)\coloneqq X\), and by the torsion-freeness \(d^\nabla I = 0\) of the connection \(\nabla\) we have \(dX^\flat = g(\nabla X\wedge I)\). Therefore, contracting with \(X\), we find
\[\iota_XdX^\flat = g(\nabla_XX,I) - g(\nabla X, X) = (\nabla_XX)^\flat -\tfrac{1}{2}d|X|^2,\]
as claimed. 
\end{proof}
\cref{thm:WadsleyLemma} suggests that the Lorentz-force of a vector field can be expressed as \(\iota_BdB^\flat\), which gives a more concise form of the expression. In particular, with 
\begin{align*}
    \beta = \star B^\flat, \qquad \star\beta = (-1)^{n-1}B^\flat
\end{align*}
it allows us to define a notion of force-free flux forms on manifolds of arbitrary dimensions.
\begin{definition}
\label{def:Beltrami}
    A closed flux form \(\beta\in\Omega^{n-1}(M)\) is called \emph{force-free} if is satisfies 
    \begin{align}
        \label{eq:BeltramiEq}
        \iota_Bd\star\beta=0.
    \end{align}
\end{definition}
As \(\curl B=0\) implies \((\curl B)\times B=0\), harmonic fields constitute an important special case of force-free fields. 
\begin{figure}[H]
    \centering
    \includegraphics[width = .95\textwidth]{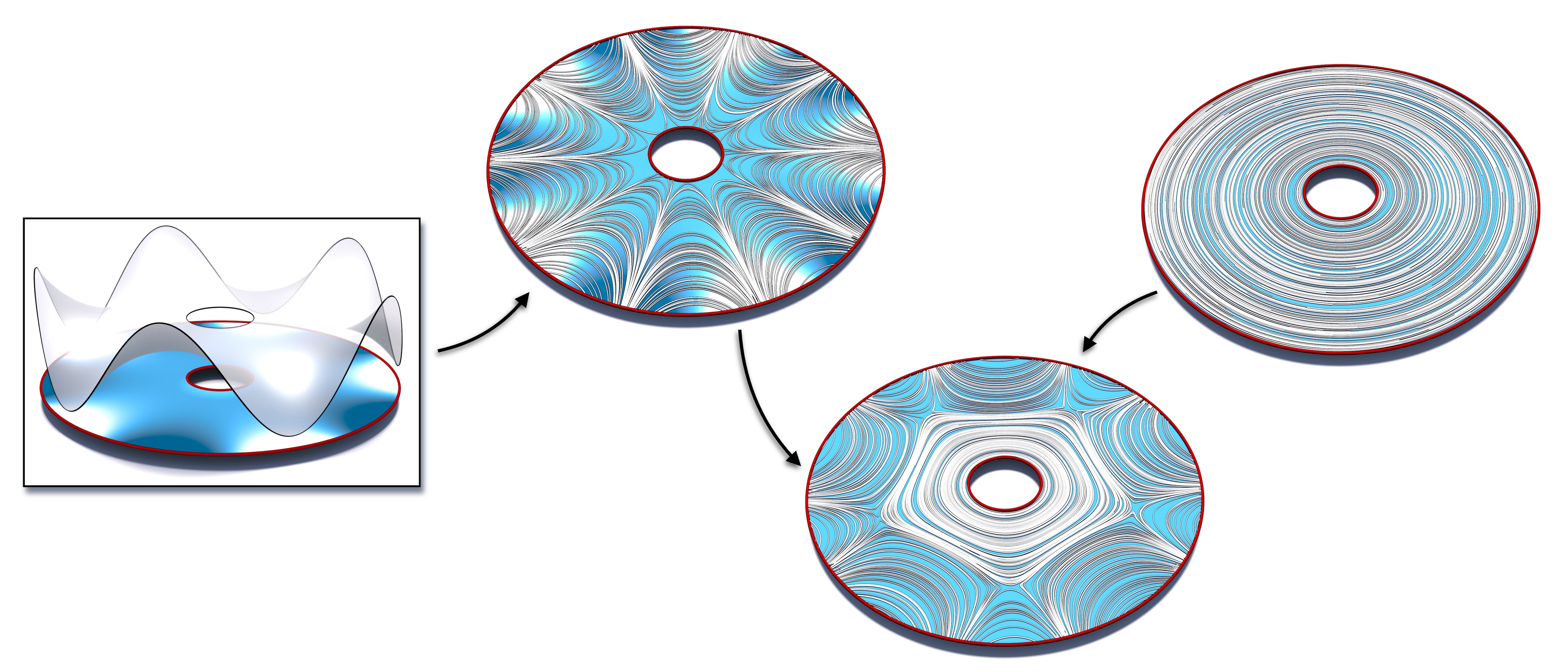}
    \caption{A non-exact harmonic vector field (bottom) build from the gradient vector field (top left) of a harmonic function (inset) and a vector field corresponding to a generator of the de Rham cohomology of the annulus (top right).}
    \label{fig:HarmonicOnAnnulus}
\end{figure}

\begin{definition}
    \label{def:(Exact)Harmonic}
    Let \(\beta\in\Omega^{n-1}(M)\) be a closed flux form. Then, 
    \begin{enumerate}
        \item \(\beta\) is called \emph{harmonic} if it is co-closed, \ie, \(d\star\beta=0\,.\)
        \item \(\beta\) is called \emph{exact harmonic} if it is co-exact, \ie, \(\beta\in\im (\star d)\,.\)
    \end{enumerate}
\end{definition}

Note that with these definitions (exact) harmonic flux forms indeed are special cases of force-free flux forms. Moreover, all exact harmonic forms are harmonic, whereas the converse does not hold. In the case that \(\beta\) is exact harmonic, the associated vector field is the gradient of some harmonic function, whereas the vector field associated to a merely harmonic flux form may have components corresponding to the non-trivial generators of the de Rham cohomology of the domain (\cref{fig:HarmonicOnAnnulus}). As an immediate consequence of \cref{eq:BeltramiEq} and \cref{def:(Exact)Harmonic} we conclude
\begin{proposition}
\label{prop:HarmoicImpliesBeltrami}
A closed flux form \(\beta\in\Omega^{n-1}(M)\) which is harmonic is force-free.
\end{proposition}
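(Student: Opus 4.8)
The plan is simply to unwind the two definitions and observe that the force-free condition is a contracted, hence weaker, form of the harmonic condition. By \cref{def:Beltrami}, calling \(\beta\) force-free means that its associated vector field \(B\) (determined by \(\iota_B\mu=\beta\)) satisfies \(\iota_B d\star\beta = 0\). By \cref{def:(Exact)Harmonic}, calling \(\beta\) harmonic is the \emph{a priori} stronger requirement that the entire \(2\)-form \(d\star\beta\) vanish, not merely its contraction against \(B\). So the work is entirely a matter of comparing these two statements.

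With this in hand the implication is immediate: if \(\beta\) is harmonic then \(d\star\beta = 0\), and contracting the zero \(2\)-form against \(B\) gives
\[\iota_B d\star\beta = \iota_B 0 = 0,\]
which is precisely the defining equation \cref{eq:BeltramiEq} of a force-free flux form. Hence every harmonic flux form is force-free, and no computation beyond substitution is required.

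I do not anticipate any genuine obstacle here; the only point worth flagging is conceptual rather than technical. Because the passage from harmonicity to force-freeness discards all of \(d\star\beta\) except its \(B\)-component, the inclusion is strict: a force-free field only needs \(d\star\beta\) to be annihilated by \(\iota_B\) (equivalently, via \cref{thm:WadsleyLemma} together with \(\star\beta = (-1)^{n-1}B^\flat\), to have vanishing Lorentz force \(\iota_B dB^\flat\)), whereas harmonicity forces \(d\star\beta\) to vanish identically. This matches the surrounding discussion that the converse fails, and the geometric picture that harmonic (curl-free, eikonal) fields form a proper subclass of the force-free (geodesic) family.
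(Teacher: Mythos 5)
Your proof is correct and is exactly the paper's argument: the paper presents this proposition as an immediate consequence of \cref{eq:BeltramiEq} and \cref{def:(Exact)Harmonic}, namely that \(d\star\beta = 0\) trivially implies \(\iota_B d\star\beta = 0\). Your additional remark on the strictness of the inclusion is consistent with the paper's surrounding discussion but is not needed for the statement itself.
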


\subsubsection{Force-Free vs. Beltrami Forms}
In the realm of fluid dynamics, force-free fields on a 3-dimensional Riemannian manifold \(M\) are referred to as \emph{Beltrami fields}. They are commonly characterized as those vector fields whose \(\curl\) is co-linear to the original field, \ie, \(\curl B=\lambda B\) for some smooth function \(\lambda\in C^\infty(M)\). Generalizing these fields to dimensions \(n>3\), a common approach is to use this property as the defining property (see, \eg, \cite{Rechtman2009UAD, cardona2021geometry}). 
\begin{definition}\label{def:BeltramiField}
	Let \(M\) be a Riemannian manifold of odd dimension \(2n+1\). Then a vector field \(B\in\Gamma TM\) is Beltrami if there is \(\lambda\in C^\infty(M)\) such that
	\begin{align}
		\label{eq:BeltramiEqColinear}
		\curl B=\lambda B,
	\end{align}
	where the vector field \(\curl B\in\Gamma TM\) is defined by
	\begin{align}\label{eq:DefCurl}
		\iota_{\curl B}\mu=(dB^\flat)^n\in\Omega^{2n}(M).
	\end{align}
 The function \(\lambda\) is referred to as the \emph{proportionality factor}.
\end{definition}

Being restricted to odd-dimensional manifolds, this approach is clearly conceptually very different from \cref{def:Beltrami}. Nonetheless, the two definitions coincide on a \(3\)-dimensional Riemannian manifold. For odd-dimensions dimensions \(n>3\) there is a subtle difference, which is why in this paper we carefully distinguish between the two notions of force-free and Beltrami fields. The following \cref{thm:ForceFreeImpliesBeltrami} asserts that force-free fields are Beltrami.
\begin{proposition}\label{thm:ForceFreeImpliesBeltrami}
		Let \(B\in\Gamma TM\) be  a nowhere vanishing, divergence-free and force-free vector field on a Riemannian manifold \(M\) of odd dimension \(2n+1\). Then \(B\) is Beltrami. 
\end{proposition}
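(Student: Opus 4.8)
The plan is to unwind the two definitions into statements about the single $2$-form $\omega := dB^\flat$ and then compare the two $2n$-forms $\omega^n$ and $\iota_B\mu$. First I would rewrite the force-free condition in the concrete form it takes on a $(2n+1)$-dimensional manifold: since the ambient dimension is odd, $\star\beta = (-1)^{(2n+1)-1}B^\flat = B^\flat$, so $d\star\beta = dB^\flat = \omega$, and \cref{eq:BeltramiEq} reduces to
\[
\iota_B\omega = 0.
\]
The divergence-free hypothesis enters here only to guarantee that $\beta = \iota_B\mu$ is closed, so that $B$ genuinely is the vector field associated to a flux form and \cref{def:Beltrami} applies in the first place.

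Next I would propagate $\iota_B\omega = 0$ through the exterior powers of $\omega$. Because $\iota_B$ is an antiderivation and $\omega$ has even degree, the Leibniz expansion of $\iota_B(\omega^n)$ produces only terms each carrying a factor $\iota_B\omega$, with all the intervening signs $(-1)^{2k}$ equal to $+1$; a one-line induction then gives
\[
\iota_B(\omega^n) = n\,(\iota_B\omega)\wedge\omega^{n-1} = 0.
\]
By the definition \cref{eq:DefCurl} we have $\omega^n = \iota_{\curl B}\mu$, so this identity reads $\iota_B\iota_{\curl B}\mu = 0$.

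The heart of the argument is then a pointwise linear-algebra fact: on an oriented $(2n+1)$-dimensional vector space with volume form $\mu$, if $B\neq 0$ and a vector $W$ satisfy $\iota_B\iota_W\mu = 0$, then $W$ is a scalar multiple of $B$. I would prove this by completing $B$ to a basis $B = e_0, e_1,\dots,e_{2n}$ with dual basis $e^0,\dots,e^{2n}$ and $\mu$ proportional to $e^0\wedge\cdots\wedge e^{2n}$, writing $W = \sum_j w_j e_j$, and checking that $\iota_B\iota_W\mu$ is a linear combination of the $(2n-1)$-forms obtained by deleting $e^0$ and one further $e^j$ with $j\geq 1$. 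These are linearly independent, so its vanishing forces $w_1 = \dots = w_{2n} = 0$, i.e.\ $W = w_0\,B$. Applying this with $W = \curl B$ (legitimate since $B$ is nowhere vanishing) shows that $\curl B$ and $B$ are collinear at every point.

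Finally I would upgrade pointwise collinearity to the global statement $\curl B = \lambda B$ with a \emph{smooth} proportionality factor by setting $\lambda := g(\curl B, B)/|B|^2$, which is well defined and smooth precisely because $B$ vanishes nowhere; projecting the collinearity relation onto $B$ then forces $\curl B = \lambda B$, so $B$ is Beltrami in the sense of \cref{def:Beltrami}. The only points requiring care are the sign bookkeeping in the antiderivation step and the clean statement and proof of the collinearity lemma; everything else is a direct translation of the definitions.
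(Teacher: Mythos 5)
Your proof is correct and follows essentially the same route as the paper's: reduce force-freeness to \(\iota_B dB^\flat = 0\), use the antiderivation property to get \(\iota_B\iota_{\curl B}\mu = \iota_B(dB^\flat)^n = 0\), and conclude collinearity \(\curl B = \lambda B\). The paper compresses this into two lines; your additions (the sign bookkeeping, the pointwise linear-algebra lemma, and the explicit smooth formula \(\lambda = g(\curl B, B)/|B|^2\)) are precisely the details it leaves implicit, and they are all sound.
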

\begin{proof}
	By assumption \(B\) is force-free, \ie, \(\iota_BdB^\flat=0\). Thus, \[\iota_B\iota_{\curl B}\mu = \iota_B(dB^\flat)^n=0,\] from which we conclude the existence of a function \(\lambda\in C^\infty(M)\) such that \(\curl B=\lambda B\). 
\end{proof}
However, the converse statement only holds with an additional assumption.
\begin{definition}
	The \emph{rank} of a \(2\)-form \(\omega\in\Omega^2(M)\) is the largest power \(r\in\ZZ_{\geq1}\) such that \(\omega^r\neq 0\) and \(\omega^{r+1}=0\).
	Here, for \(p\in\ZZ_{\geq 1}\), the term \(\omega^p\) denotes the \(p\)-fold wedge product \(\omega\wedge\ldots\wedge\omega\) of \(\omega\) with itself.
\end{definition}
The rank is said to be \emph{maximal} if \(r=n\) on a manifold of even dimension \(2n\), \resp\@ odd dimension \(2n+1\). We will leave it to the reader (see, \eg,~\cite{Gross:2024:CGIMHD}) to verify 
\begin{lemma}\label{thm:GenericImplicationLemma}
	Let \(M\) be a manifold of odd dimension \(2n+1\) and \(\omega\in\Omega^2(M)\) of maximal rank. Then for every vector field \(X\in\Gamma TM\) we have that \(\iota_X\omega^n=0\) if and only if \(\iota_X\omega=0\).
\end{lemma}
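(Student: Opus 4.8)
The plan is to work pointwise and reduce the whole statement to one algebraic identity together with a linear-algebra injectivity fact. First I would record, for any vector field \(X\) and any \(2\)-form \(\omega\), the derivation identity
\[
\iota_X\omega^n = n\,(\iota_X\omega)\wedge\omega^{n-1},
\]
which follows from the graded Leibniz rule \(\iota_X(\alpha\wedge\eta)=(\iota_X\alpha)\wedge\eta+(-1)^{\deg\alpha}\alpha\wedge\iota_X\eta\) by induction on \(n\); the sign is harmless since \(\deg\omega=2\) is even, and the \(1\)-form \(\iota_X\omega\) commutes with \(\omega\) under \(\wedge\). This identity makes the forward implication immediate: \(\iota_X\omega=0\) gives \(\iota_X\omega^n=0\) at once. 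It also shows that the converse is equivalent to the purely pointwise claim that, whenever \(\omega_p\) has maximal rank, wedging with \(\omega^{n-1}\) is injective on \(1\)-forms.

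For the converse I would fix a point \(p\) and invoke the normal form for a skew form of maximal rank \(2n\) on the \((2n+1)\)-dimensional space \(T_pM\): there is a basis \(\zeta,\alpha^1,\beta^1,\ldots,\alpha^n,\beta^n\) of \(T_p^\ast M\) with \(\omega=\sum_{i=1}^n\alpha^i\wedge\beta^i\), where \(\zeta\) is dual to the one-dimensional radical \(\ker\omega\). Expanding, \(\omega^{n-1}=(n-1)!\sum_{|S|=n-1}\bigwedge_{k\in S}(\alpha^k\wedge\beta^k)\), which involves no \(\zeta\). I would then compute the images of the basis covectors under \(L^{n-1}(\cdot):=(\cdot)\wedge\omega^{n-1}\): the factor \(\alpha^i\wedge\omega^{n-1}\) survives only for the single \((n-1)\)-subset omitting \(i\), giving (up to a nonzero scalar) the basis \((2n-1)\)-form obtained by deleting \(\beta^i\) from the full product of all \(\alpha^k,\beta^k\); likewise \(\beta^i\wedge\omega^{n-1}\) deletes \(\alpha^i\); and \(\zeta\wedge\omega^{n-1}\) is the only image carrying a \(\zeta\)-factor. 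These \(2n+1\) forms are pairwise-distinct basis elements of \(\Lambda^{2n-1}T_p^\ast M\) (the \(\zeta\)-term is separated from the rest by its \(\zeta\)-factor, and the remaining \(2n\) are distinct deletions), hence linearly independent, so \(L^{n-1}\) is injective.

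Combining the pieces: if \(\iota_X\omega^n=0\) then \((\iota_X\omega)\wedge\omega^{n-1}=0\) by the identity, and injectivity of \(L^{n-1}\) forces \(\iota_X\omega=0\). As everything is verified pointwise and the normal form exists at each point, this gives the equivalence on all of \(M\).

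I expect the only genuine content to be the injectivity of \(L^{n-1}\): this is exactly the bottom-degree hard-Lefschetz isomorphism of symplectic linear algebra, and the maximal-rank hypothesis is precisely what guarantees it—if the rank dropped, \(\omega^{n-1}\) would acquire a larger radical and \(L^{n-1}\) a kernel. The only wrinkle relative to the standard even-dimensional Lefschetz statement is the extra radical direction \(\zeta\), but since \(\omega^{n-1}\) omits \(\zeta\) entirely, the image \(\zeta\wedge\omega^{n-1}\) is automatically independent of the others; alternatively one may note that \(\iota_X\omega\) always annihilates \(\ker\omega\) and so lies in \(\spanset\{\alpha^i,\beta^i\}\), reducing directly to the genuinely symplectic \(2n\)-dimensional subspace where classical Lefschetz applies.
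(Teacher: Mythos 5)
Your proof is correct, but there is nothing in the paper to compare it against: the authors explicitly leave this lemma to the reader (with a pointer to \cite{Gross:2024:CGIMHD}), so your argument fills a gap rather than paralleling an existing one. The structure you chose is the standard and complete one: the antiderivation identity \(\iota_X\omega^n = n\,(\iota_X\omega)\wedge\omega^{n-1}\) gives the forward implication for free and reduces the converse to the injectivity of \(\tau\mapsto\tau\wedge\omega^{n-1}\) on \(1\)-forms, which you verify pointwise from the linear Darboux normal form \(\omega=\sum_i\alpha^i\wedge\beta^i\); your bookkeeping of the images of the basis covectors (including the radical direction \(\zeta\), whose image is the only one carrying a \(\zeta\)-factor) is sound, as is the alternative reduction via the observation that \(\iota_X\omega\) annihilates \(\ker\omega\). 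One point worth flagging: you tacitly read ``maximal rank'' as a pointwise condition on \(\omega_p\), whereas the paper's definition of rank (\(\omega^n\neq 0\), \(\omega^{n+1}=0\) as forms on \(M\)) admits a global reading under which the converse implication is actually false --- for instance \(\omega = dx_1\wedge dx_2 + f\,dx_3\wedge dx_4\) on \(\RR^5\) with \(f\) a bump function and \(X = g\,\partial_{x_1}\) with \(g\) supported in the interior of \(\{f=0\}\) satisfies \(\iota_X\omega^2\equiv 0\) but \(\iota_X\omega\not\equiv 0\). Your pointwise reading is the correct one, and it is consistent with how the paper applies the lemma (maximal rank almost everywhere, followed by a continuity argument), but stating that interpretation explicitly would make the proof airtight.
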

Following \cite{cardona2021geometry} we will refer to the vector field \(B\in\Gamma TM\) (\resp\@ \(B^\flat\)) on a Riemannian manifold \(M\) as \emph{generic} if \(dB^\flat\) has maximal rank almost everywhere.
\begin{proposition}
		Let \(B\in\Gamma TM\) be a nowhere vanishing and generic Beltrami vector field on a Riemannian manifold \(M\) of odd dimension \(2n+1\). Then \(B\) is force-free. 
\end{proposition}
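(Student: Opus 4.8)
The plan is to deduce the force-free identity \(\iota_B dB^\flat=0\) from the Beltrami condition by first establishing its contracted top-power version \(\iota_B(dB^\flat)^n=0\) and then inverting that step through \cref{thm:GenericImplicationLemma}. Writing \(\omega\coloneqq dB^\flat\), recall that on a \((2n+1)\)-manifold a \(2\)-form has rank at most \(n\), and that \(\omega\) has rank exactly \(n\) precisely on the open set \(U\coloneqq\{\omega^n\neq 0\}\). By genericity \(U\) has full measure and is therefore dense in \(M\); on \(U\) the hypotheses of \cref{thm:GenericImplicationLemma} are met with \(X=B\), so on \(U\) it suffices to prove \(\iota_B\omega^n=0\).

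The computation itself is immediate and runs parallel to the proof of \cref{thm:ForceFreeImpliesBeltrami}, but in the opposite direction. Since \(B\) is Beltrami, \(\iota_{\curl B}\mu=\omega^n\) and \(\curl B=\lambda B\) for some \(\lambda\in C^\infty(M)\), whence \(\omega^n=\iota_{\lambda B}\mu=\lambda\,\iota_B\mu\). Contracting once more with \(B\) and using \(\iota_B\iota_B=0\) yields \[\iota_B\omega^n=\lambda\,\iota_B\iota_B\mu=0\] on all of \(M\), and in particular on \(U\). Applying \cref{thm:GenericImplicationLemma} then gives \(\iota_B\omega=\iota_B dB^\flat=0\) at every point of \(U\).

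It remains to upgrade this conclusion from \(U\) to all of \(M\). Since \(B\) and \(B^\flat\) are smooth, \(\iota_B dB^\flat\) is a continuous \(1\)-form vanishing on the dense set \(U\), and hence it vanishes identically, so \(B\) is force-free. I expect the only genuinely delicate point to be precisely this last passage from the full-measure locus of maximal rank to the whole manifold: one must verify that \(\{\omega^n\neq 0\}\) is open (so that a set of full measure is indeed dense) and that the object being extended, \(\iota_B dB^\flat\), is continuous, at which point continuity closes the gap. Everything preceding this is a formal reversal of the chain of implications already used to prove \cref{thm:ForceFreeImpliesBeltrami}, with \cref{thm:GenericImplicationLemma} supplying exactly the inverse of the step \(\iota_B\omega=0\Rightarrow\iota_B\omega^n=0\) that holds for free in the other direction.
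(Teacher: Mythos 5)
Your proposal is correct and follows essentially the same route as the paper's proof: derive \(\iota_B(dB^\flat)^n=0\) from \(\curl B=\lambda B\) together with \(\iota_B\iota_B\mu=0\), invoke \cref{thm:GenericImplicationLemma} on the (dense, by genericity) maximal-rank locus of \(dB^\flat\), and conclude \(\iota_BdB^\flat=0\) everywhere by continuity. Your write-up is in fact slightly more careful than the paper's, since you make explicit that \(\{(dB^\flat)^n\neq 0\}\) is open and that a full-measure set is dense, which is exactly what the paper's closing phrase ``which yields the claim by continuity'' silently relies on.
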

\begin{proof}
	Let \(\lambda\in C^\infty(M)\) such that \(\curl B=\lambda B\), then 
	\[\lambda\beta=\iota_{\curl B}\mu = (dB^\flat)^n\]
	and therefore 
	\[0=\lambda\iota_B\beta =\iota_B(dB^\flat)^n.\]
	By the genericity assumption, \(\lambda\) is non-vanishing almost everywhere and by \cref{thm:GenericImplicationLemma} \(B\in\ker (dB^\flat)^n\) implies \(B\in\ker dB^\flat\) almost everywhere, which yields the claim by continuity. 
\end{proof}
Unfortunately, a known equivalence between geodesible vector fields and Beltrami fields does not generalize to dimensions \(2n+1>3\). A volume preserving Beltrami field which is not geodesible is constructed in~\cite[Sec. 2.2.2]{cardona2021geometry}. 
However, in return for our slightly stronger assumptions, our \cref{def:Beltrami} preserves this equivalence, not only odd, but arbitrary dimensions. Moreover, our definition preserves the property that the defining equations for force-free forms contain (exact) harmonic forms as special cases. Lastly, again in agreement with the 3-dimensional theory, our defining equations emerge as the Euler-Lagrange equations of corresponding variational principles (\cref{sec:HierarchyOfL2,sec:HierarchyOfL1}).

\subsection{Geodesic Flux Forms}
\begin{definition}\label{def:GeodesicVF}
    A flux form \(\beta\in\Omega^{n-1}(M)\) is called \emph{geodesic} if the acceleration of its associated vector field is always proportional to itself, \ie, there is a \(\rho\in C^\infty(M)\) such that 
    \begin{align}
        \label{eq:GeodesicVectorField}    
        \nabla_BB=\rho B,
    \end{align}
    where \(\nabla\) denotes the Levi-Civita connection of the Riemannian metric \(g\). If \(\rho\neq 0\), the vector field \(B\) is called \emph{pre-geodesic}, while for  \(\rho=0\), \(B\) is called \emph{geodesic}.
\end{definition}

The field line associated to a a geodesic flux form trace out geodesics (possibly up to reparametrization,) in the Riemannian manifold. Whenever \(B\) is non vanishing we may consider the \emph{directional vector field} \(H\coloneqq|B|^{-1}B\in\Gamma TM\). The corresponding \emph{directional covector field} is given by 
\(H^\flat\).

For flux forms \(\beta\) with constant length (\ie, the associated vector fields have constant length) \cref{thm:WadsleyLemma} implies
\[0=(\nabla_BB)^\flat=\iota_BdB^\flat = \iota_Bd\star\beta,\]
from which we conclude
\begin{lemma}
\label{thm:GeodesicIff}
A flux form \(\beta\in\Omega^{n-1}(M)\) is geodesic if and only if on its support
\begin{align}
    \label{eq:EquivalentGeodesicDef}
    0=\iota_Bd\left(\tfrac{\star\beta}{|\star\beta|}\right),
\end{align}
where \(B\) is the vector field associated to \(\beta\).
\end{lemma}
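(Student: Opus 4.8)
The plan is to derive the stated criterion directly from \cref{thm:WadsleyLemma} applied not to $B$ itself but to its unit-length renormalization $H = |B|^{-1}B$. The key observation is that $H$ has constant length $|H|\equiv 1$ on the support of $\beta$, so that the troublesome $\tfrac12 d|H|^2$ term in \cref{eq:WadsleyFormula} vanishes identically there. Since $H$ and $B$ point in the same direction, being geodesic is a property of the underlying field line and should be detectable from $H$ alone; the content of the lemma is exactly that the normalized covector $\star\beta/|\star\beta|$ encodes $H^\flat$ via the dictionary $\star\beta = (-1)^{n-1}B^\flat$ already recorded in the excerpt.

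First I would establish the relationship between $H^\flat$ and the normalized form appearing in \cref{eq:EquivalentGeodesicDef}. From $\star\beta = (-1)^{n-1}B^\flat$ and the fact that the Hodge star is a pointwise isometry, we have $|\star\beta| = |B^\flat| = |B|$, hence
\begin{align*}
    \frac{\star\beta}{|\star\beta|} = (-1)^{n-1}\frac{B^\flat}{|B|} = (-1)^{n-1}H^\flat.
\end{align*}
The sign $(-1)^{n-1}$ is a nonzero constant, so $\iota_B d\bigl(\tfrac{\star\beta}{|\star\beta|}\bigr) = 0$ holds if and only if $\iota_B dH^\flat = 0$, and since $B = |B|H$ with $|B|\neq 0$ on the support, this is in turn equivalent to $\iota_H dH^\flat = 0$.

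Next I would apply \cref{thm:WadsleyLemma} with the substitution $X = H$. Because $|H|^2 \equiv 1$ on the support, $d|H|^2 = 0$ there, and \cref{eq:WadsleyFormula} collapses to
\begin{align*}
    \iota_H dH^\flat = (\nabla_H H)^\flat.
\end{align*}
Thus $\iota_H dH^\flat = 0$ is equivalent to $\nabla_H H = 0$. It then remains to connect $\nabla_H H = 0$ with the geodesic condition $\nabla_B B = \rho B$ of \cref{def:GeodesicVF}. Writing $B = |B|H$ and expanding $\nabla_B B = |B|\,\nabla_H(|B|H)$ via the Leibniz rule produces a term proportional to $H$ (coming from differentiating the scalar $|B|$ along $H$) plus a term $|B|^2\nabla_H H$. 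Hence $\nabla_H H$ is proportional to $H$—equivalently to $B$—if and only if $\nabla_B B$ is proportional to $B$, and one checks that $\nabla_H H$, being orthogonal to the unit field $H$, can only be proportional to $H$ when it vanishes outright. This shows $\nabla_B B = \rho B$ for some $\rho$ if and only if $\nabla_H H = 0$, closing the chain of equivalences.

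The main obstacle I anticipate is the bookkeeping in this last reparametrization step: one must verify carefully that normalizing $B$ to unit length converts the pre-geodesic condition $\nabla_B B = \rho B$ (geodesic only up to reparametrization) into the genuinely geodesic condition $\nabla_H H = 0$ for the unit field. The crucial fact is that differentiating $|H|^2 = g(H,H) = 1$ along any direction gives $g(\nabla H, H) = 0$, so $\nabla_H H \perp H$; combined with the requirement that $\nabla_H H$ be a scalar multiple of $H$, this forces $\nabla_H H = 0$. This orthogonality argument is the conceptual heart of why passing to the unit-length representative is the right move, and it is what makes the constant-length hypothesis of \cref{thm:WadsleyLemma} directly applicable.
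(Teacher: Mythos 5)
Your proposal is correct and takes essentially the same route as the paper: the paper likewise obtains the lemma by applying \cref{thm:WadsleyLemma} to the constant-length directional field \(H=|B|^{-1}B\), so that the \(\tfrac{1}{2}d|X|^2\) term drops out and \(\iota_H dH^\flat=(\nabla_H H)^\flat\), with \(\star\beta/|\star\beta|\) identified (up to sign) with \(H^\flat\). Your careful handling of the reparametrization step---that \(\nabla_B B=\rho B\) is equivalent to \(\nabla_H H=0\) because \(\nabla_H H\perp H\)---just makes explicit what the paper leaves implicit.
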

Note that from \teqref{eq:EquivalentGeodesicDef} alone we can conclude that a vector field \(B\) is geodesic if and only if its directional vector field is force-free.

\subsection{Normalizations and Eikonal Flux Forms}
\label{sec:NormalizationsAndEikonalFluxForms}
The statement of \cref{thm:GeodesicIff} can be reformulated to eliminate the restriction to the support of \(\beta\). To this end, we address the ill-posedness of normalization when  \(\star\beta\) becomes zero.
\begin{definition}
    \label{def:Normalization}
    Let \(\alpha\in\Omega^k(M)\). Then a \(k\)-form \(\xi\in\Omega^k(M)\) is called a normalization of \(\alpha\in\Omega^k(M)\) if 
    \[|\alpha|\xi=\alpha \quad\text{and}\quad |\xi|\leq 1.\]
\end{definition}
At every point \(p\in M\), a normalization can be seen as an element of the subdifferential \(\partial|\beta|\) (cf. \cref{sec:HierarchyOfL1}). Thus, whenever the flux form \(\beta\) is non-vanishing, it is uniquely determined. In particular, on the support of a flux form, a normalization coincides with the directional covector field. Therefore, we may more adequately state \cref{thm:GeodesicIff} as follows: 
\begin{proposition}\label{thm:GeodesicIffNormalizationProp}
A closed flux form \(\beta\in\Omega^{n-1}(M)\) is geodesic if and only if there exists a normalization \(\eta\in\Omega^{1}(M)\) of \(\star\beta\) such that \[0=\iota_Bd\eta.\] 
\end{proposition}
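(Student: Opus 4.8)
The plan is to derive this proposition directly from \cref{thm:GeodesicIff}, the whole point being to trade the phrase ``on its support'' for the freedom in choosing a normalization. The two facts I would lean on are, first, that $\star\beta = (-1)^{n-1}B^\flat$, so that $|\star\beta| = |B|$ and on the open set $U \coloneqq \{p : B(p)\neq 0\}$ the normalization of $\star\beta$ is forced to be the directional covector field $\star\beta/|\star\beta| = (-1)^{n-1}H^\flat$; and second, that wherever $B$ vanishes the expression $\iota_B d\eta$ is identically zero, so the condition in the proposition is vacuous off $U$. Since the factor $(-1)^{n-1}$ is a constant, $\iota_B d\eta$ and $\iota_B d(\star\beta/|\star\beta|)$ agree on $U$, and the proposition becomes a bookkeeping statement about extending the condition of the lemma from $U$ to all of $M$.

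For the ``if'' direction I would argue as follows. Suppose $\eta\in\Omega^1(M)$ is a normalization of $\star\beta$ with $\iota_B d\eta = 0$. On $U$ we have $|\star\beta| > 0$, so the defining relation $|\star\beta|\,\eta = \star\beta$ pins down $\eta = \star\beta/|\star\beta|$ there. Hence $0 = \iota_B d\eta = \iota_B d(\star\beta/|\star\beta|)$ on $U$, i.e.\ on the support of $\beta$, and \cref{thm:GeodesicIff} immediately gives that $\beta$ is geodesic. This direction needs no construction at all, only the uniqueness of the normalization where $B$ is nonzero.

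For the ``only if'' direction I would start from \cref{thm:GeodesicIff}, which for a geodesic $\beta$ provides $\iota_B d(\star\beta/|\star\beta|) = 0$ on $U$. I would set $\eta \coloneqq \star\beta/|\star\beta|$ on $U$, where it is smooth, has unit length, and satisfies $|\star\beta|\eta = \star\beta$. On the interior of the zero set $M\setminus\overline{U}$ the relation $|\star\beta|\eta = \star\beta$ degenerates to $0 = 0$, so $\eta$ is unconstrained there apart from $|\eta|\le 1$, and $\iota_B d\eta = 0$ holds for free because $B = 0$. Thus away from $\partial U$ one obtains a normalization satisfying the required identity, and the task reduces to producing a single smooth global one-form that restricts to the directional covector field on $U$.

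The hard part will be exactly this last extension across the boundary $\partial U$, where $B$ vanishes but is approached by points of $U$: in general $\star\beta/|\star\beta|$ need not extend to a smooth element of $\Omega^1(M)$, since the direction of $B$ may fail to have a limit there. I expect to resolve this by emphasizing that $\iota_B d\eta = 0$ is automatic on the entire closed set $\{B = 0\}$, so only a smooth norm-$\le 1$ extension of the directional covector field is needed, not one respecting any differential condition; when $\beta$ is nowhere vanishing (so $U = M$) the choice $\eta = \star\beta/|\star\beta|$ works verbatim, and the general case requires a careful gluing near $\partial U$ whose smoothness is the one genuinely delicate point of the argument.
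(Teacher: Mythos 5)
Your route is the same as the paper's: the paper offers no more proof of \cref{thm:GeodesicIffNormalizationProp} than the observations in your first two paragraphs, namely that on \(U=\{\beta\neq0\}\) the defining relation \(|\star\beta|\,\eta=\star\beta\) forces \(\eta=\star\beta/|\star\beta|\), while off \(U\) the condition \(\iota_Bd\eta=0\) holds vacuously because \(B=0\). In particular your ``if'' direction is complete and coincides with the paper's reasoning.

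The extension problem you flag in the ``only if'' direction is, however, a genuine gap, and it cannot be repaired by any careful gluing: the statement as literally written fails without further hypotheses. On the flat two-torus take \(B=f(y)\,\partial_x\) with \(f\) smooth, \(f(0)=0\), and \(f\) changing sign infinitely often as \(y\to0\) (e.g.\ \(f(y)=e^{-1/y^2}\sin(1/y)\) near \(y=0\), cut off away from \(y=0\)); for \(n=3\) take the product with a further circle. Then \(\beta=\iota_B\mu=f(y)\,dy\) is closed and \(\nabla_BB=0\), so \(\beta\) is geodesic with straight field lines; but on \(U\) any normalization of \(\star\beta\) is forced to equal \(\pm\sgn(f(y))\,dx\), which admits no continuous extension across the circle \(\{y=0\}\), so no \(\eta\in\Omega^1(M)\) satisfying \(|\star\beta|\eta=\star\beta\) exists at all, and the right-hand side of the proposition fails. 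So the honest conclusion of your analysis is not that a delicate gluing remains to be done, but that the proposition needs an extra hypothesis: \(\beta\) nowhere vanishing (your verbatim case \(\eta=\star\beta/|\star\beta|\)), or a zero set across which the direction field extends smoothly with norm \(\leq1\). The paper never addresses this point; it implicitly confines all statements to the support of \(\beta\) (see its later remark on flux forms with non-global support), so your proposal is no less rigorous than the paper's own argument --- but you should state the missing hypothesis explicitly rather than defer the extension as a solvable technicality.
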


As pointed out in \secref{sec:Introduction}, \emph{eikonal fields} are a special subclass of geodesic vector fields. They describe gradients of distance functions and therefore have a unit norm. Thus, the corresponding covector fields are closed normalizations of the corresponding flux forms.
\begin{definition}
    \label{def:Eikonal}
    A closed flux form \(\beta\in\Omega^{n-1}(M)\) is called \emph{eikonal} (\resp\@ \emph{exact eikonal}) if there exists a closed (\resp\@ exact) 
    normalization  \(\eta\in\Omega^1(M)\) of \(\star\beta\).
\end{definition}
\begin{proposition}
\label{prop:EikonalImpliesGeodesic}
A closed flux form \(\beta\in\Omega^{n-1}(M)\) which is eikonal is geodesic.
\end{proposition}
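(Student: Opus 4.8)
The plan is to deduce the statement directly from the normalization-based characterization of geodesic flux forms in \cref{thm:GeodesicIffNormalizationProp}. That proposition asserts that a closed flux form \(\beta\) is geodesic if and only if there exists \emph{some} normalization \(\eta\in\Omega^1(M)\) of \(\star\beta\) satisfying \(\iota_Bd\eta=0\). The eikonal hypothesis (\cref{def:Eikonal}) furnishes exactly such a normalization, but with the strictly stronger property of being closed, so the implication reduces to a single observation.

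Concretely, I would argue as follows. Suppose \(\beta\) is eikonal and let \(\eta\in\Omega^1(M)\) be the closed normalization of \(\star\beta\) guaranteed by \cref{def:Eikonal}, so that by definition \(|\star\beta|\eta=\star\beta\), \(|\eta|\leq 1\), and \(d\eta=0\). Then in particular \(\iota_Bd\eta=\iota_B 0=0\). Hence \(\eta\) is a normalization of \(\star\beta\) meeting the criterion of \cref{thm:GeodesicIffNormalizationProp}, and applying that proposition immediately yields that \(\beta\) is geodesic, as claimed.

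I expect no genuine obstacle here: the entire content has been front-loaded into \cref{thm:GeodesicIffNormalizationProp} and \cref{def:Eikonal}, and the implication amounts to noting that being \emph{closed} is strictly stronger than the defining geodesic condition \(\iota_Bd\eta=0\). This mirrors the structure of \cref{prop:HarmoicImpliesBeltrami}, where being co-closed was likewise strictly stronger than the force-free condition \(\iota_Bd\star\beta=0\); the present statement is precisely its normalized (directional) counterpart, reflecting the inclusion of eikonal fields among geodesic fields in exact parallel with the inclusion of harmonic fields among force-free fields.
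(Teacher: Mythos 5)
Your proof is correct and matches the paper's intent exactly: the paper states \cref{prop:EikonalImpliesGeodesic} without an explicit proof, treating it as an immediate consequence of \cref{def:Eikonal} and \cref{thm:GeodesicIffNormalizationProp}, precisely the one-line deduction you give (a closed normalization satisfies \(d\eta=0\), hence \(\iota_Bd\eta=0\)). Your observation that this parallels \cref{prop:HarmoicImpliesBeltrami} is also the structural point the paper itself emphasizes.
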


\subsection{Flux Forms in Conformal Geometry}
A \emph{conformal class} on an \(n\)-dimensional smooth manifold \(M\) is an equivalence class of Riemannian metrics, where two metrics are  \(g\) and \(h\) are considered \emph{conformally equivalent} if there exists a smooth function \(u\in C^\infty(M)\) such that 
\begin{align}
    \label{eq:ConformalMetric}
    e^{2u}g= h.
\end{align}
A manifold \(M\) together with a conformal structure (denoted by \([g]\)) is referred to as \emph{conformal manifold}.

While the quantities associated with the flux form, as defined in \cref{sec:FluxFormsInRiemannianGeometry}, rely on the specific choice of a Riemannian metric, the flux form \(\beta\) itself and, consequently, the geometry of the corresponding field lines are independent of the metric. Hence, it is possible to define special types of flux forms on a conformal manifold by requiring the existence of a representative metric within the equivalence class that satisfies the defining equations. Consequently, we introduce the following definitions:
\begin{definition}
\label{def:ConformallyVerions}
A closed flux form \(\beta\in\Omega^{n-1}(M)\) on a conformal manifold \(M\) is called 
\begin{enumerate}
    \item \emph{conformally force-free} if there exists a metric in the conformal class of \(M\) such that \(\beta\) is force-free.
    \item \emph{conformally geodesic} if there exists a metric in the conformal class of \(M\) such that \(\beta\) is geodesic.
    \item \emph{conformally harmonic} (\resp\@ \emph{conformally exact harmonic}) if there exists a metric in the conformal class of \(M\) such that \(\beta\) is  harmonic (\resp\@ exact harmonic).
    \item \emph{conformally eikonal} (\resp\@ \emph{conformally exact eikonal}) if there exists a metric in the conformal class of \(M\) such that \(\beta\) is eikonal (\resp\@ exact eikonal).
\end{enumerate}
\end{definition}

Note that with the metric independence of \(\beta\), also the statements of \cref{prop:HarmoicImpliesBeltrami} and \cref{prop:EikonalImpliesGeodesic} carry over to the conformal setup. 

\section{A Hierarchy of Variational Principles for the \(L^2\)-Norm}
\label{sec:HierarchyOfL2}
Both, force-free and the more specialized cases of (exact) harmonic flux forms can equivalently be characterized in terms of variational principles. To this end, one considers the \(L^2\)-norm of the flux form, which is given by 
\begin{align}
    \label{eq:L2Norm}
    \|\beta\|^2_2 \coloneqq \int_M\beta\wedge\star\beta\,.
\end{align}
The different cases then emerge as stationary points of the \(L^2\)-norm under different classes of variations with suitable boundary conditions. 
\begin{theorem}
    \label{thm:VariationalExactHarmonic}
    A closed flux form \(\beta\in\Omega^{n-1}(M)\) with \(j^\ast_{\partial M}\beta=\beta_{\partial M}\) for given boundary data \(\beta_{\partial M}\in\Omega^{n-1}(M)\) is a stationary point of the \(L^2\)-norm (\(d\mathring\beta = 0\) and \(j^\ast_{\partial M}\mathring\beta = 0\)) if and only if \(\beta\) is exact harmonic. 
\end{theorem}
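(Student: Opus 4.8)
The plan is to differentiate $\|\beta\|_2^2 = \int_M\beta\wedge\star\beta$ once, convert stationarity into an $L^2$-orthogonality condition, and then dissect that condition in an interior (Hodge-theoretic) step followed by a topological (duality) step. First I would compute the first variation: writing $\beta_t = \beta + t\mathring\beta$ and using the symmetry $a\wedge\star b = b\wedge\star a$ of the pointwise inner product, the $O(t)$ coefficient of $\int_M\beta_t\wedge\star\beta_t$ equals $2\int_M\mathring\beta\wedge\star\beta$. Hence $\beta$ is a stationary point with respect to the prescribed class of variations precisely when
\[\int_M\mathring\beta\wedge\star\beta = 0\quad\text{for all }\mathring\beta\in\Omega^{n-1}(M)\text{ with }d\mathring\beta=0,\ j^\ast_{\partial M}\mathring\beta=0,\]
i.e. when $\beta$ is $L^2$-orthogonal to the space $V$ of closed $(n-1)$-forms whose boundary pullback vanishes.

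The direction \emph{exact harmonic $\Rightarrow$ stationary} is a direct application of Stokes' theorem. If $\beta\in\im(\star d)$, say $\beta = \star df$, then $\star\beta = (-1)^{n-1}df$, and for any admissible $\mathring\beta$ one has $\mathring\beta\wedge df = (-1)^{n-1}d(f\mathring\beta)$ because $d\mathring\beta = 0$; integrating gives $\int_{\partial M}j^\ast_{\partial M}(f\mathring\beta)=0$ since $j^\ast_{\partial M}\mathring\beta=0$. So the orthogonality holds.

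For the converse I would use $V$ in two stages. In the first stage I restrict to the exact admissible variations $\mathring\beta = d\lambda$ with $\lambda\in\Omega^{n-2}(M)$ and $j^\ast_{\partial M}\lambda=0$; these lie in $V$. Integration by parts gives $\int_M d\lambda\wedge\star\beta = \pm\int_M\lambda\wedge d\star\beta$ with vanishing boundary term (because $j^\ast_{\partial M}\lambda=0$), and letting $\lambda$ range over all compactly supported $(n-2)$-forms the fundamental lemma of the calculus of variations forces $d\star\beta=0$. Thus $\beta$ is harmonic and $\star\beta$ is a closed $1$-form. In the second stage I use that, now that $\star\beta$ is closed and $d\star\beta=0$, the integral $\int_M\mathring\beta\wedge\star\beta$ depends only on the relative class $[\mathring\beta]\in H^{n-1}(M,\partial M)$ and the absolute class $[\star\beta]\in H^1(M)$, and the elements of $V$ realize \emph{every} class in $H^{n-1}(M,\partial M)$. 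By Poincar\'e--Lefschetz duality the resulting pairing $H^{n-1}(M,\partial M)\times H^1(M)\to\RR$ is non-degenerate, so vanishing on all of $V$ forces $[\star\beta]=0$; hence $\star\beta = df$ for some $f\in C^\infty(M)$, and therefore $\beta = (-1)^{n-1}\star df\in\im(\star d)$ is exact harmonic.

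The main obstacle is the second stage: the interior argument only delivers co-closedness ($\beta$ harmonic), and one must still eliminate the possibility of nontrivial cohomology. The substantive work there is to identify the wedge-integral with the non-degenerate Lefschetz pairing, which requires verifying that it descends to cohomology (the boundary terms die because $j^\ast_{\partial M}\mathring\beta=0$ and $d\star\beta=0$) and that the relative de Rham complex is modeled by forms with vanishing pullback so that $V$ surjects onto $H^{n-1}(M,\partial M)$. By contrast the remaining bookkeeping—the factor $2$ and the signs coming from $\star\star=(-1)^{n-1}$ on $(n-1)$-forms—is routine.
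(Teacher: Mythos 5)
Your proof is correct, but for the key step it takes a genuinely different route than the paper. Both arguments start the same way: the first variation turns stationarity into the condition that \(\beta\) be \(L^2\)-orthogonal to \(V=\{\mathring\beta\in\Omega^{n-1}(M)\mid d\mathring\beta=0,\ j^\ast_{\partial M}\mathring\beta=0\}\). The paper then finishes in a single stroke by citing the Hodge--Morrey--Friedrichs decomposition, which identifies \(V^\perp=\im(\star d)\) and hence gives both implications at once. You instead prove this identification by hand in two stages: compactly supported exact variations plus the fundamental lemma of the calculus of variations yield \(d\star\beta=0\) (which is precisely the content of \cref{thm:VariationalHarmonic}); then, once \(\star\beta\) is closed, the wedge integral descends to the Poincar\'e--Lefschetz pairing \(H^{n-1}(M,\partial M)\times H^1(M)\to\RR\), whose non-degeneracy---together with the fact that closed forms with vanishing boundary pullback represent every relative class---forces \([\star\beta]=0\), i.e. \(\star\beta=df\) and \(\beta=(-1)^{n-1}\star df\in\im(\star d)\); your separate Stokes argument for the converse direction is likewise sound. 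What your route buys is a clean separation of the local Euler--Lagrange content (co-closedness) from the global topological content (exactness of \(\star\beta\)), which makes transparent why enlarging the class of variations from homologically constrained ones to all of \(V\) upgrades ``harmonic'' to ``exact harmonic''; it also avoids invoking the full \(L^2\)-decomposition machinery. What it costs is that you must still import two standard but nontrivial facts---non-degeneracy of the Lefschetz pairing and that the Dirichlet subcomplex \(\{\omega\mid j^\ast_{\partial M}\omega=0\}\) computes \(H^\ast(M,\partial M;\RR)\)---both of which are proved in the same reference (Schwarz) that the paper cites for Hodge--Morrey--Friedrichs, so the net reliance on external results ends up comparable, with your version being longer but structurally more informative.
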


\begin{proof}
    The vanishing variation condition of the \(L^2\)-norm \eqref{eq:L2Norm} is given by \[0=\int_M\mathring\beta\wedge\star\beta\] for all \(\mathring\beta\) satisfying \(d\mathring\beta = 0\) and \(j^*_{\partial M}\mathring\beta = 0\).  That is, the stationary condition is equivalent to \[\beta\in\{\mathring\beta\in \Omega^{n-1}(M)\,\vert\, d\mathring\beta = 0,j_{\partial M}^*\mathring\beta = 0\}^\bot = \im(\star d)\] where the last equality is given by the Hodge--Morrey--Friedrichs decomposition~\cite{schwarz2006hodge}.
\end{proof}

\begin{theorem}
    \label{thm:VariationalHarmonic}
    A closed flux form \(\beta\in\Omega^{n-1}(M)\) with \(j^\ast_{\partial M}\beta=\beta_{\partial M}\) for given boundary data \(\beta_{\partial M}\in\Omega^{n-1}(M)\) is a stationary point of the \(L^2\)-norm under \emph{homologically constrained variations}, \ie\@ \(\mathring\beta = d\alpha\) for some \(\alpha\in\Omega^{n-2}(M)\) with \(j^\ast_{\partial M}\alpha = 0\),  if and only if \(\beta\) is harmonic. 
\end{theorem}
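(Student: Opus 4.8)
The plan is to compute the first variation of the \(L^2\)-norm restricted to the admissible class and to reduce the resulting condition to a pointwise equation via integration by parts. As in the proof of \cref{thm:VariationalExactHarmonic}, the vanishing variation of \cref{eq:L2Norm} reads \(0 = \int_M \mathring\beta\wedge\star\beta\); but now the variations are exactly of the form \(\mathring\beta = d\alpha\) with \(\alpha\in\Omega^{n-2}(M)\) subject to \(j^\ast_{\partial M}\alpha = 0\), so stationarity is equivalent to
\[
0 = \int_M d\alpha\wedge\star\beta \qquad \text{for all such } \alpha.
\]
First I would integrate by parts: from \(d(\alpha\wedge\star\beta) = d\alpha\wedge\star\beta + (-1)^{n-2}\alpha\wedge d\star\beta\) and Stokes' theorem, the integral becomes \(\int_{\partial M} j^\ast_{\partial M}(\alpha\wedge\star\beta) - (-1)^{n-2}\int_M\alpha\wedge d\star\beta\). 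The boundary integrand factors as \(j^\ast_{\partial M}\alpha\wedge j^\ast_{\partial M}\star\beta\) and vanishes because \(j^\ast_{\partial M}\alpha = 0\). Hence stationarity is equivalent to \(\int_M\alpha\wedge d\star\beta = 0\) for all admissible \(\alpha\).

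The second step is to extract the pointwise conclusion \(d\star\beta = 0\), which is precisely harmonicity per \cref{def:(Exact)Harmonic}. Here \(\star\beta\in\Omega^1(M)\), so \(d\star\beta\in\Omega^2(M)\), and the wedge pairing \(\Lambda^{n-2}T_p^\ast(M)\times\Lambda^2 T_p^\ast(M)\to\Lambda^n T_p^\ast(M)\) from \cref{sec:FluxFormsInRiemannianGeometry} is nondegenerate. Restricting to \(\alpha\) with compact support in the interior (which automatically satisfy \(j^\ast_{\partial M}\alpha = 0\)), the fundamental lemma of the calculus of variations together with this nondegeneracy forces \(d\star\beta = 0\) on the interior, hence on all of \(M\) by continuity. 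For the converse, assuming \(d\star\beta = 0\) makes both terms of the integration-by-parts identity vanish—the interior term because \(d\star\beta = 0\) and the boundary term because \(j^\ast_{\partial M}\alpha = 0\)—so \(\int_M d\alpha\wedge\star\beta = 0\) for every admissible variation and \(\beta\) is stationary.

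The step requiring the most care is the bookkeeping around the boundary term: one must confirm that the Dirichlet-type condition \(j^\ast_{\partial M}\alpha = 0\) is exactly what annihilates the boundary contribution, so that no unwanted natural boundary condition on \(\beta\) is imposed and the surviving interior condition is genuinely two-sided. Alternatively, mirroring \cref{thm:VariationalExactHarmonic} more closely, I could phrase stationarity as \(\beta\in\{\,d\alpha : \alpha\in\Omega^{n-2}(M),\, j^\ast_{\partial M}\alpha = 0\,\}^\bot\) with respect to the \(L^2\) pairing and read off \(d\star\beta = 0\) directly from the Hodge--Morrey--Friedrichs decomposition, since the \(L^2\)-orthogonal complement of the exact forms with vanishing pullback of the potential is exactly the space of co-closed forms.
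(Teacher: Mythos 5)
Your proof is correct and takes essentially the same route as the paper: compute the first variation, integrate by parts so that the Dirichlet condition \(j^\ast_{\partial M}\alpha = 0\) annihilates the boundary term, and conclude \(d\star\beta = 0\) (your sign bookkeeping matches the paper's \((-1)^{n-1}\int_M\alpha\wedge d\star\beta\), and your fundamental-lemma argument just spells out the final step the paper states without proof). Your alternative orthogonal-complement formulation via Hodge--Morrey--Friedrichs is the mechanism the paper reserves for \cref{thm:VariationalExactHarmonic}, but it is equally valid here.
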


\begin{proof}
    The vanishing variation condition of the \(L^2\)-norm \eqref{eq:L2Norm} under variations \(\mathring\beta = d\alpha\), \(j_{\partial M}^*\alpha\), is given by \[0=\int_M d\alpha\wedge\star\beta = (-1)^{n-1}\int_M\alpha\wedge d\star\beta\] for all \(\alpha\in\Omega^{n-2}(M)\) with \(j_{\partial M}^*\alpha = 0\).  This condition holds if and only if \(d\star\beta = 0\). 
\end{proof}

\begin{theorem}
    \label{thm:VariationalBeltrami}
    A closed flux form \(\beta\in\Omega^{n-1}(M)\) with \(j^\ast_{\partial M}\beta=\beta_{\partial M}\) for given boundary data \(\beta_{\partial M}\in\Omega^{n-1}(M)\) is a stationary point of the \(L^2\)-norm under \emph{isotopy constraint variations}, \ie\@ \(\mathring\beta = -\LD_\xi\beta \) for some \(\xi\in \Gamma TM\) which is compactly supported in the interior of \(M\),  if and only if \(\beta\) is force-free. 
\end{theorem}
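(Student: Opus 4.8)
The plan is to compute the first variation of $\|\beta\|_2^2$ directly and then reduce the resulting stationarity condition, via Stokes' theorem and a single pointwise duality identity, to the force-free equation $\iota_B d\star\beta=0$ of \cref{def:Beltrami}. Since only $\beta$ varies while the metric (hence $\star$ and $\mu$) is held fixed, and $\beta\wedge\star\mathring\beta=\mathring\beta\wedge\star\beta$ for forms of equal degree, the first variation of \cref{eq:L2Norm} is $2\int_M\mathring\beta\wedge\star\beta$. Substituting the admissible variation $\mathring\beta=-\LD_\xi\beta$ and invoking Cartan's magic formula together with the closedness $d\beta=0$ gives $\LD_\xi\beta=d\iota_\xi\beta$, so the stationarity condition reads $\int_M(d\iota_\xi\beta)\wedge\star\beta=0$ for every $\xi$ compactly supported in the interior.

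First I would integrate by parts. Expanding $d\big((\iota_\xi\beta)\wedge\star\beta\big)=(d\iota_\xi\beta)\wedge\star\beta\pm(\iota_\xi\beta)\wedge d\star\beta$ and applying Stokes' theorem, the boundary integral of $(\iota_\xi\beta)\wedge\star\beta$ vanishes because $\xi$ is compactly supported in the interior. Thus, up to an overall sign, stationarity is equivalent to $\int_M(\iota_\xi\beta)\wedge d\star\beta=0$ for all such $\xi$.

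The crux is to convert this pairing into an explicit inner product against $\xi$ so that the fundamental lemma of the calculus of variations applies. Here I would exploit that $\beta\wedge d\star\beta$ is an $(n+1)$-form and hence vanishes identically; applying the antiderivation $\iota_\xi$ then yields the pointwise identity $(\iota_\xi\beta)\wedge d\star\beta=\pm\,\beta\wedge\iota_\xi d\star\beta$. Using $\beta=\star B^\flat$, the relation $\alpha\wedge\star\gamma=\langle\alpha,\gamma\rangle\mu$, and the fact that contraction is the metric adjoint of the wedge (so that $\langle\iota_\xi d\star\beta,B^\flat\rangle=-\langle\iota_B d\star\beta,\xi^\flat\rangle$), the integrand collapses to $(\iota_\xi\beta)\wedge d\star\beta=\langle\iota_B d\star\beta,\xi^\flat\rangle\,\mu$. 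Hence $\beta$ is stationary if and only if $\int_M\langle\iota_B d\star\beta,\xi^\flat\rangle\,\mu=0$ for every compactly supported $\xi$, and the fundamental lemma gives exactly $\iota_B d\star\beta=0$; both implications of the \emph{if and only if} fall out of this one equality. I expect the main obstacle to be the sign bookkeeping in this pointwise step — recognizing $\beta\wedge d\star\beta=0$ and correctly invoking the contraction–wedge adjunction — rather than any analytic difficulty, since the Stokes and fundamental-lemma arguments are routine.
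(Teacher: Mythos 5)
Your proposal is correct and follows essentially the same path as the paper's proof: the first variation \(2\int_M\mathring\beta\wedge\star\beta\), Cartan's formula with \(d\beta=0\) giving \(\mathring\beta=-d\iota_\xi\beta\), Stokes' theorem with the compact support of \(\xi\) killing the boundary term, and the arbitrariness of \(\xi\) converting \(\int_M(\iota_\xi\beta)\wedge d\star\beta=0\) into \(\iota_B d\star\beta=0\), which settles both directions at once. The only divergence is the pointwise step: the paper transfers the contraction from \(\xi\) to \(B\) through a chain of Hodge-star identities, whereas you expand \(0=\iota_\xi(\beta\wedge d\star\beta)\) (a form of degree \(n+1\)) and use the wedge--contraction adjunction; the two computations are equivalent, and your signs do work out exactly, so that \((\iota_\xi\beta)\wedge d\star\beta\) equals the pointwise inner product of \(\iota_B d\star\beta\) with \(\xi^\flat\) times \(\mu\).
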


\begin{proof}
    By Cartan's formula and \(d\beta = 0\), the isotopic variations take the form \(\mathring\beta = -\LD_\xi\beta = -d\ip_{\xi}\beta\) for compactly supported vector fields \(\xi\in\Gamma TM\).
    The variation of \eqref{eq:L2Norm} under such variation is given by
    \begin{align*}
    \tfrac{1}{2}(\|\beta\|_2^2\mathring) &= 
    \int_M - d\ip_\xi\beta\wedge\star\beta = (-1)^n\int_M \ip_\xi\beta\wedge d\star\beta
    =\int_M\star(\xi^\flat\wedge\beta)\wedge d\star\beta \\ &= 
    \int_M\xi^\flat\wedge(\star\beta)\wedge\star d\star\beta=
    (-1)^n\int_M\xi^\flat\wedge\star (\ip_B d\star\beta).
    \end{align*}
    Therefore, the vanishing variation condition for all compactly supported \(\xi\in\Gamma TM\) is equivalent to \(\ip_B d\star\beta = 0\), \ie\@ \(\beta\) is force-free.
\end{proof}

\section{A Hierarchy of Variational Principles for the \(L^1\)-Norm}
\label{sec:HierarchyOfL1}
We now derive the stationary conditions of \(L^1\)-optimization problems with the same sets of boundary conditions and constraints on the variations we have employed for the \(L^2\)-case.  To this end we first note that the integrand \(|\beta|\) of the \(L^1\)-norm fails to be smooth at vanishing points of \(\beta\). Therefore, when considering variations of the \(L^1\)-norm
\begin{align}
        \label{eq:L1EL}
        \biggl(\int_M|B|\,\mu\mathring{\biggr)} = \int_M\biggl( \sqrt{\star\left( \beta\wedge\star\beta\right)} \mathring{\biggr)} = \int_M \mathring{\beta}\wedge \partial|\beta|\,, 
    \end{align} 
we need to resort to the subdifferential
\begin{align}
    \partial|\beta|=\begin{cases}
        \tfrac{\star\beta}{|\star\beta|} & \text{if}\ \beta\neq 0 \\
        \{\alpha\in\Omega^1(M)\mid |\alpha|\leq 1\} & \text{if}\ \beta= 0
    \end{cases}
\end{align}
of \(\beta\) in order to state the stationary conditions. As pointed out in \cref{sec:NormalizationsAndEikonalFluxForms}, the subdifferential \(\partial|\beta|\) consists of the normalizations \(\xi\) of \(\star\beta\). 
\begin{lemma}
    \(\partial|\beta|= \{\xi\in\Omega^1(M)\mid |\xi|\leq1,\ |\star\beta|\xi=\star\beta \}\) 
\end{lemma}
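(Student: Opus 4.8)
The plan is to verify the asserted set equality directly from the explicit expression for $\partial|\beta|$ displayed just above the lemma, splitting into the two cases $\beta\neq 0$ and $\beta=0$ and invoking the single nontrivial fact that the Hodge star is a pointwise isometry, so that $|\star\beta|=|\beta|$. Write $N\coloneqq\{\xi\in\Omega^1(M)\mid |\xi|\leq1,\ |\star\beta|\xi=\star\beta\}$ for the set of normalizations of $\star\beta$ in the sense of \cref{def:Normalization}, so that the claim is exactly $\partial|\beta|=N$.

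First I would treat a point where $\beta\neq 0$. There $\star\beta\neq0$ and hence $|\star\beta|=|\beta|\neq 0$, so the defining equation $|\star\beta|\xi=\star\beta$ has the unique solution $\xi=\star\beta/|\star\beta|=\star\beta/|\beta|$, whose norm is $|\xi|=|\star\beta|/|\beta|=1\leq 1$. Thus $N$ reduces to the singleton $\{\star\beta/|\beta|\}$, which is precisely the first branch of the case formula for $\partial|\beta|$. At a point where $\beta=0$ we have $\star\beta=0$, so the equation $|\star\beta|\xi=\star\beta$ degenerates to $0=0$ and imposes no constraint; there $N=\{\xi\in\Omega^1(M)\mid |\xi|\leq1\}$, which is exactly the second branch. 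Since the two descriptions agree branch by branch, $\partial|\beta|=N$.

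For completeness I would also record why the case formula itself is the subdifferential of the convex function $\beta\mapsto|\beta|$ taken with respect to the duality pairing $\langle\eta|\omega\rangle=\star(\eta\wedge\omega)$ between $\Omega^{n-1}$ and $\Omega^1$: pointwise this is the standard subdifferential of a norm on a finite-dimensional inner-product space, namely the dual unit ball at the origin and the unique supporting covector $\star\beta/|\beta|$ elsewhere. The one point needing care is that this pairing is self-dual, i.e.\ the dual norm of a $1$-form $\xi$ equals $|\xi|$; this follows from $\star(\eta\wedge\xi)=\pm\langle\eta,\star\xi\rangle$ together with the isometry property of $\star$, whence $\sup_{|\eta|\leq1}\star(\eta\wedge\xi)=|\star\xi|=|\xi|$. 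Granting self-duality, the supporting-covector characterization ``$|\xi|\leq1$ and $\star(\beta\wedge\xi)=|\beta|$'' of $\partial|\beta|$ at $\beta\neq0$ coincides with the normalization conditions, since $\star(\beta\wedge(\star\beta/|\beta|))=|\beta|^2/|\beta|=|\beta|$ by the definition $|\beta|^2=\star(\beta\wedge\star\beta)$.

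The argument is essentially a bookkeeping verification: the only genuine inputs are the isometry identity $|\star\beta|=|\beta|$ and the self-duality of the pairing. Accordingly, the main (and only minor) obstacle is keeping the Hodge-star sign conventions straight when confirming that $\star\beta/|\beta|$ is the supporting covector; everything else is immediate case analysis.
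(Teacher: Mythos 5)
Your proof is correct, and it reaches the lemma by a somewhat different organization than the paper's. The paper splits the set equality into two inclusions handled by different means: to show $\partial|\beta|$ is contained in the set of normalizations it reads off the displayed case formula directly, while for the reverse inclusion it returns to the convex-analytic definition and verifies the subgradient inequality $|\tilde\beta|\geq|\beta|+\langle\eta\,|\,\tilde\beta-\beta\rangle$ by hand, in the course of which it asserts without proof the duality equivalence $|\eta|\leq1\ \Leftrightarrow\ \langle\eta\,|\,\tilde\beta\rangle\leq|\tilde\beta|$ for all $\tilde\beta$. You instead compare the case formula with the normalization set purely pointwise---which is the right reading, since the case formula only makes literal sense point by point when $\beta$ vanishes on part of $M$---and then justify, as a separate modular step, that the case formula really is the subdifferential, by citing the standard description of the subdifferential of a norm on a finite-dimensional inner-product space and checking the one hypothesis that is not automatic in this setting: self-duality of the pairing $\langle\eta\,|\,\xi\rangle=\star(\eta\wedge\xi)$, i.e.\ $\sup_{|\eta|\leq1}\star(\eta\wedge\xi)=|\xi|$. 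Both arguments rest on the same two ingredients (the isometry $|\star\beta|=|\beta|$ and the duality of the wedge pairing), so neither is more elementary; what yours buys is a cleaner separation of the two logical layers and an explicit proof of the duality claim that the paper leaves implicit, while the paper's version stays self-contained by never invoking the general norm-subdifferential fact.
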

\begin{proof}
Let \(\eta\in\partial |\beta|\). When \(\beta\neq0\), then \(\eta=\tfrac{\star\beta}{|\star\beta|}\) and therefore \(|\eta|=1\). Moreover, when \(\beta=0\), then \(\eta\in\Omega^1(M)\) which (by definition) satisfies \(|\eta|\leq 1\). Clearly, also \(0\cdot\eta=0\) and therefore \(\eta\) is a normalization. 

Let conversely \(\eta\in\Omega^1(M)\) be a normalization of \(\star\beta\), \ie\@ \(|\star\beta|\eta=\star\beta\) and \(|\eta|\leq 1\). By definition, the subdifferential of \(|\beta|\) is given by 
\[\partial|\beta|=\{\alpha\in\Omega^1(M)\mid |\tilde\beta|\geq|\beta| + \langle\alpha\,|\,\tilde\beta-\beta\rangle\quad \forall\tilde\beta\in\Omega^{n-1}(M)\}.\] Now if \(\beta=0\), then \[|\eta|\leq 1 \quad\Leftrightarrow \quad\sup_{\tilde\beta\in\Omega^{n-k}(M),\, |\tilde\beta|=1}\langle\eta\,|\,\tilde\beta\rangle \leq 1 \quad\Leftrightarrow\quad\langle\eta\,|\,\tilde\beta\rangle \leq |\tilde\beta|\quad \forall\tilde\beta\in\Omega^{n-1}(M).\]
Moreover, if \(\beta\neq0\) we have that \(\langle\eta\,|\,\beta\rangle = |\star\beta|=|\beta|\) and hence 
\[|\tilde\beta|\geq|\beta| + \langle\eta\,|\,\tilde\beta-\beta\rangle = \langle\eta\,|\,\tilde\beta\rangle \quad \forall\tilde\beta\in\Omega^{n-1}(M)\}\] holds if an only if \(|\eta|\leq 1\), which is true by assumption.
\end{proof}

\begin{theorem}
    \label{thm:VariationalExactEikonal}
    A closed flux form \(\beta\in\Omega^{n-1}(M)\) with \(j^\ast_{\partial M}\beta=\beta_{\partial M}\) for given boundary conditions \(\beta_{\partial M}\in\Omega^{n-1}(M)\) is a stationary point of the \(L^1\)-norm (\(d\mathring\beta = 0\) and \(j^\ast_{\partial M}\mathring\beta = 0\)) if and only if \(\beta\) is exact eikonal. 
\end{theorem}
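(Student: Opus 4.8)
The plan is to follow the $L^2$ argument of \cref{thm:VariationalExactHarmonic} almost verbatim, replacing the smooth differential $\star\beta$ by the subdifferential $\partial|\beta|$. Starting from the variation formula \eqref{eq:L1EL}, the first variation in an admissible direction $\mathring\beta$ is $\int_M\mathring\beta\wedge\eta$ with $\eta\in\partial|\beta|$. Since $\int_M|\beta|\,\mu$ is convex and the constraint set is the affine space $\{\beta\mid d\beta=0,\ j^\ast_{\partial M}\beta=\beta_{\partial M}\}$ with direction space $V\coloneqq\{\mathring\beta\mid d\mathring\beta=0,\ j^\ast_{\partial M}\mathring\beta=0\}$, I would first argue that $\beta$ is stationary if and only if some selection of the subdifferential annihilates every feasible direction. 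Concretely, $\beta$ is stationary if and only if there exists $\eta\in\partial|\beta|$ with
\[
0=\int_M\mathring\beta\wedge\eta\qquad\text{for all }\mathring\beta\in V .
\]
By the preceding lemma such an $\eta$ is exactly a normalization of $\star\beta$, so the task reduces to showing that this normalization can be taken \emph{exact}.

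For the Hodge-theoretic step I would transport the pairing into the $L^2$ inner product on $(n-1)$-forms. Let $\zeta\in\Omega^{n-1}(M)$ be the unique form with $\star\zeta=\eta$ (using that $\star\colon\Omega^{n-1}(M)\to\Omega^1(M)$ is an isomorphism); then $\int_M\mathring\beta\wedge\eta=\int_M\mathring\beta\wedge\star\zeta$ is the $L^2$ inner product of $\mathring\beta$ and $\zeta$, and the stationary condition becomes $\zeta\in V^\bot$. The Hodge--Morrey--Friedrichs decomposition — as already invoked in \cref{thm:VariationalExactHarmonic} — identifies $V^\bot=\im(\star d)$, whence $\zeta=\star df$ for some $f\in C^\infty(M)$. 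Applying $\star$ gives $\eta=\star\star df=(-1)^{n-1}df$, an exact $1$-form. Hence $\star\beta$ admits the exact normalization $\eta$, which is precisely the statement that $\beta$ is exact eikonal (\cref{def:Eikonal}).

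The converse runs the same identities backwards. If $\beta$ is exact eikonal then $\star\beta$ has an exact normalization $\eta=df$, and by the lemma $\eta\in\partial|\beta|$. For any admissible $\mathring\beta$, integration by parts using $d\mathring\beta=0$ gives
\[
\int_M\mathring\beta\wedge df=(-1)^{n-1}\int_M d(f\,\mathring\beta)=(-1)^{n-1}\int_{\partial M}f\,j^\ast_{\partial M}\mathring\beta=0,
\]
since $j^\ast_{\partial M}\mathring\beta=0$. Thus the first variation vanishes and $\beta$ is stationary.

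The main obstacle is the very first step: justifying that non-smooth stationarity of the $L^1$-functional is equivalent to the existence of a \emph{single} subgradient lying in $V^\bot$. The delicate set is the zero locus of $\beta$, where $\partial|\beta|$ is the whole unit ball of $1$-forms rather than a single covector, so the set-valued first variation $\int_M\mathring\beta\wedge\partial|\beta|$ must be shown to contain $0$ uniformly over $V$; this is exactly the convex-analytic assertion $\partial|\beta|\cap V^\bot\neq\emptyset$. Once this optimality characterization is secured, the remaining manipulations are identical to the Hodge-theoretic ones already used for the $L^2$-norm.
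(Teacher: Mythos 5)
Your proposal follows essentially the same route as the paper: the paper's proof is literally the one-line remark that, analogously to \cref{thm:VariationalExactHarmonic}, the stationary condition \(0\in\int_M\mathring\beta\wedge\partial|\beta|\) over all admissible \(\mathring\beta\) is equivalent to the existence of an exact normalization \(\eta\in\partial|\beta|\) of \(\star\beta\), and your Hodge--Morrey--Friedrichs identification of \(V^\bot=\im(\star d)\) together with the Stokes-theorem converse is exactly the elided detail. The convex-analytic selection issue you flag (that stationarity means a \emph{single} subgradient lies in \(V^\bot\)) is likewise swept into the paper's set-valued notation, so your treatment is, if anything, more explicit than the original.
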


\begin{proof}
    Analogous to the proof of \cref{thm:VariationalExactHarmonic} we conclude from \cref{eq:L1EL} that the stationary condition \[0\in \int_M \mathring{\beta}\wedge \partial|\beta|\quad \text{for all \(\mathring\beta\) with \(d\mathring\beta = 0\) and \(j^\ast_{\partial M}\mathring\beta = 0\)}\] 
    is equivalent to the existence of an exact normalization \(\eta\in\partial|\beta|\) of \(\star\beta\), \ie\@ \(\beta\) is exact eikonal (\cref{def:Eikonal}).
\end{proof}

\begin{theorem}
    \label{thm:VariationalEikonal}
    A closed flux form \(\beta\in\Omega^{n-1}(M)\) with \(j^\ast_{\partial M}\beta=\beta_{\partial M}\) for given boundary conditions \(\beta_{\partial M}\in\Omega^{n-1}(M)\) is a stationary point of the \(L^1\)-norm under \emph{homologically constraint variations}, \ie\@ \(\mathring\beta = d\alpha\) for some \(\alpha\in\Omega^{n-2}(M)\) with \(j^\ast_{\partial M}\alpha = 0\),  if and only if \(\beta\) is eikonal. 
\end{theorem}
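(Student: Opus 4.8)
The plan is to mirror the two template arguments already in hand: the integration-by-parts manipulation from the proof of \cref{thm:VariationalHarmonic} together with the subdifferential bookkeeping from the proof of \cref{thm:VariationalExactEikonal}. Beginning from the first-variation formula \eqref{eq:L1EL} and inserting the homologically constrained variations \(\mathring\beta = d\alpha\) with \(j^\ast_{\partial M}\alpha = 0\), the stationarity condition unpacks to the existence of a selection \(\eta\in\partial|\beta|\), i.e.\ a normalization of \(\star\beta\) in the sense of \cref{def:Normalization}, such that
\[
0 = \int_M d\alpha\wedge\eta \qquad\text{for all } \alpha\in\Omega^{n-2}(M) \text{ with } j^\ast_{\partial M}\alpha = 0.
\]

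I would then integrate by parts exactly as in \cref{thm:VariationalHarmonic}. From \(d(\alpha\wedge\eta) = d\alpha\wedge\eta + (-1)^{n-2}\alpha\wedge d\eta\) and the vanishing of \(\int_{\partial M} j^\ast_{\partial M}(\alpha\wedge\eta)\) (because \(j^\ast_{\partial M}\alpha = 0\)), Stokes' theorem gives \(\int_M d\alpha\wedge\eta = (-1)^{n-1}\int_M \alpha\wedge d\eta\). The stationarity condition thus reduces to the existence of a normalization \(\eta\) of \(\star\beta\) with \(\int_M \alpha\wedge d\eta = 0\) for every admissible \(\alpha\). Since compactly supported \((n-2)\)-forms are admissible and the pairing \(\langle\cdot|\cdot\rangle\) between \(\Lambda^{n-2}\) and \(\Lambda^{2}\) is nondegenerate, the fundamental lemma of the calculus of variations forces \(d\eta = 0\) in the interior, hence on all of \(M\) by continuity. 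That is, \(\eta\) is a \emph{closed} normalization of \(\star\beta\), which by \cref{def:Eikonal} is precisely the assertion that \(\beta\) is eikonal; conversely, any closed normalization witnesses stationarity by reversing the computation.

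The delicate point, and the only genuine departure from the smooth \(L^2\) argument, is the order of quantifiers imposed by the set-valued subdifferential. Stationarity means that \(0\) lies in the subdifferential of the \(L^1\)-norm restricted to the affine space of homologically constrained competitors, which unpacks to the existence of a \emph{single} subgradient \(\eta\in\partial|\beta|\) annihilating the entire constraint subspace \(\{d\alpha : j^\ast_{\partial M}\alpha = 0\}\), rather than an \(\alpha\)-dependent family of subgradients. It is precisely because \(\eta\) can be fixed before the integration by parts that the fundamental lemma produces one closed normalization, matching \cref{def:Eikonal} verbatim. I expect verifying that this is the correct reading of stationarity for the nonsmooth functional to be the main obstacle, the remaining Stokes and fundamental-lemma steps being routine.
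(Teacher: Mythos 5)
Your proposal is correct and follows essentially the same route as the paper's proof: insert \(\mathring\beta = d\alpha\) into \cref{eq:L1EL}, integrate by parts using \(j^\ast_{\partial M}\alpha = 0\), and conclude that stationarity is equivalent to the existence of a single closed normalization \(\eta\in\partial|\beta|\), i.e.\ that \(\beta\) is eikonal per \cref{def:Eikonal}. The paper compresses this into one line of set-valued notation, \(0\in\int_M\mathring\beta\wedge\partial|\beta| = (-1)^{n-1}\int_M\alpha\wedge d(\partial|\beta|)\), so your explicit treatment of the Stokes step, the fundamental lemma, and the quantifier order (one fixed subgradient \(\eta\) annihilating the whole constraint subspace) is simply a careful unpacking of what the paper leaves implicit.
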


\begin{proof}
    The stationary condition for the variation of the \(L^1\)-norm under variations \(\mathring\beta = d\alpha\), \(j^\ast_{\partial M}\alpha = 0\) is given by 
    \begin{align*}
    0\in \int_M \mathring{\beta}\wedge \partial|\beta| = (-1)^{n-1}\int_M\alpha\wedge d(\partial|\beta|)
    \end{align*}for all \(\alpha\in\Omega^{n-2}(M)\) with \(j_{\partial M}^*\alpha = 0\), which is equivalent to the existence of a closed normalization \(\eta\in\partial|\beta|\) of \(\star\beta\), \ie\@ \(\beta\) is eikonal (\cref{def:Eikonal}).
\end{proof}

\begin{theorem}
    \label{thm:VariationalTwistedGeodesic}
    A closed flux form \(\beta\in\Omega^{n-1}(M)\) with \(j^\ast_{\partial M}\beta=\beta_{\partial M}\) for given boundary conditions \(\beta_{\partial M}\in\Omega^{n-1}(M)\) is a stationary point of the \(L^1\)-norm under \emph{isotopy constraint variations}, \ie\@ \(\mathring\beta = -\LD_\xi\beta \) for some \(\xi\in \Gamma TM\) which is compactly supported in the interior of \(M\),  if and only if there exists a normalization \(\eta\in\partial|\beta|\) of \(\star\beta\) such that 
\((\star\beta)\wedge(\star d\eta)=0\). 
\end{theorem}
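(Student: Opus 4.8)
The plan is to run the variational computation of \cref{thm:VariationalBeltrami} essentially verbatim, but for the \(L^1\)-functional, so that the outer factor \(\star\beta\) of the \(L^2\)-integrand is replaced by a subgradient \(\eta\in\partial|\beta|\). The only structural difference is that \(\eta\) is a \emph{normalization} of \(\star\beta\) rather than \(\star\beta\) itself, so the final expression is kept in the wedge form stated in the theorem rather than collapsed into a global interior-product equation.

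First I would rewrite the constraint exactly as in \cref{thm:VariationalBeltrami}: by Cartan's formula and \(d\beta=0\), the isotopy variations are \(\mathring\beta=-\LD_\xi\beta=-d\ip_\xi\beta\) with \(\xi\in\Gamma TM\) compactly supported in the interior. Reading the subdifferential stationary condition as in \cref{thm:VariationalExactEikonal,thm:VariationalEikonal}, \(\beta\) is a stationary point if and only if there is a normalization \(\eta\in\partial|\beta|\) of \(\star\beta\) with
\[
0=\int_M\mathring\beta\wedge\eta=\int_M-d\ip_\xi\beta\wedge\eta\qquad\text{for every admissible }\xi.
\]
Since \(\ip_\xi\beta\) (of degree \(n-2\)) is compactly supported in the interior, Stokes' theorem removes the exact term and, with the same sign bookkeeping as in \cref{thm:VariationalBeltrami}, gives \(\int_M-d\ip_\xi\beta\wedge\eta=(-1)^n\int_M\ip_\xi\beta\wedge d\eta\).

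Next I would apply the very same interior-product/Hodge-star manipulation used in \cref{thm:VariationalBeltrami}, now with \(d\eta\) in place of \(d\star\beta\): substituting \(\ip_\xi\beta=(-1)^n\star(\xi^\flat\wedge\star\beta)\) and swapping the Hodge star across the wedge (legitimate because \(\xi^\flat\wedge\star\beta\) and \(d\eta\) are both \(2\)-forms, so the commutation sign is trivial) turns the integrand into \(\xi^\flat\wedge(\star\beta)\wedge(\star d\eta)\). Thus stationarity is equivalent to
\[
\int_M\xi^\flat\wedge\bigl((\star\beta)\wedge(\star d\eta)\bigr)=0\qquad\text{for all admissible }\xi.
\]
By the fundamental lemma of the calculus of variations (letting \(\xi\) run over interior bump fields in all directions) together with the non-degeneracy of the wedge pairing \(\Lambda^1T_p^\ast M\times\Lambda^{n-1}T_p^\ast M\to\Lambda^nT_p^\ast M\) recorded in \cref{sec:FluxFormsInRiemannianGeometry}, this holds precisely when \((\star\beta)\wedge(\star d\eta)=0\) pointwise, which is the assertion.

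I expect the genuine difficulty to be bookkeeping rather than any new idea. The two delicate points are: (i) the handling of the set-valued subdifferential, namely that a \emph{single} normalization \(\eta\) must annihilate every isotopy variation at once, and that \(d\eta\) is only unambiguous on the support of \(\beta\) --- but off the support \(\star\beta=0\) makes \((\star\beta)\wedge(\star d\eta)=0\) automatic, so nothing is lost there; and (ii) the Hodge-star and interior-product sign conventions, in particular the identity \(\ip_\xi\beta=(-1)^n\star(\xi^\flat\wedge\star\beta)\) and the degree parity that trivializes the star-swap sign. As a consistency check, note that since \(\star\beta=(-1)^{n-1}B^\flat\), the standard identity \(\star(\ip_B d\eta)=-B^\flat\wedge\star d\eta\) gives \((\star\beta)\wedge(\star d\eta)=(-1)^n\star(\ip_B d\eta)\); hence the emergent condition is equivalent to \(\ip_B d\eta=0\), i.e.\ exactly the geodesic condition of \cref{thm:GeodesicIffNormalizationProp}, confirming the intended place of this twisted-geodesic case in the hierarchy.
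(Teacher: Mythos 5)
Your proof is correct and takes essentially the same route as the paper, which itself simply reruns the computation of \cref{thm:VariationalBeltrami} with a subgradient \(\eta\in\partial|\beta|\) in place of \(\star\beta\), arriving at \(0=\int_M\xi^\flat\wedge\bigl((\star\beta)\wedge(\star d\eta)\bigr)\) and hence the pointwise condition \((\star\beta)\wedge(\star d\eta)=0\). You merely spell out the steps the paper leaves implicit (Stokes, the identity \(\iota_\xi\beta=(-1)^n\star(\xi^\flat\wedge\star\beta)\), the fundamental lemma, and the equivalence with \(\iota_Bd\eta=0\)), all of which check out.
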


\begin{proof}
With analogous arguments as for \cref{thm:VariationalBeltrami} the vanishing condition for all compact-support \(\xi\in\Gamma TM\) is given by
\begin{align*}
0\in -\int_M\partial|\beta|\wedge d\iota_\xi\beta = -\int_M \xi^\flat\wedge\left( (\star\beta)\wedge (\star d (\partial|\beta|))\right),
\end{align*}
which is equivalent to \(0\in (\star\beta)\wedge (\star d (\partial|\beta|))\), \ie\@ the existence of a normalization \(\eta\in\partial|\beta|\) of \(\star\beta\) which satisfies \(0=(\star\beta)\wedge (\star d \eta)\).
\end{proof}
 
The Karush–Kuhn–Tucker (KKT) condition~\cite{Boyd_Vandenberghe_2004} \((\star\beta)\wedge(\star d\eta)=0\) can equivalently be expressed in terms of the associated vector field \(B\) as 
\begin{align}
    \iota_Bd\eta=0
\end{align}
Note that on the support of \(\beta\), the normalization agrees with the directional covector field (\cref{thm:GeodesicIffNormalizationProp}), hence the KKT-condition suggests that the field lines form a geodesic foliation. We refer to these fields as \emph{twisted geodesic foliations} as they do not necessarily solve an optimal transport problem. The corresponding \emph{untwisted} cases solve a Beckmann optimal transport problem and correspond to (exact) eikonal fields (\cref{fig:HyperboloidTransport}).

\begin{remark}[Twisted Minimal Foliations]
    In the field of calibrated geometry~\cite{Harvey:1982:CG, Zhang2013Gluing}, the directional covector field \(\eta\) is referred to as a \emph{calibration}. In more generality, a calibration is a closed form \(\alpha\in\Omega^{k}(M)\) which, for every oriented \(k\)-dimensional subspace \(V\subset T_pM\), satisfies \(\alpha\vert_V\leq \mu_V\), where \(\mu_V\) is the volume form on \(V\) induced by the Riemannian metric. The existence of a calibration gives rise to a foliation of minimal \(k\)-dimensional submanifolds---in our setup a geodesic foliation by field lines. On the basis of the hierarchy of KKT-conditions 
    \begin{equation}
        \{\eta = d\alpha\}\quad\subset\quad \{d\eta=0\} \quad\subset\quad \{\ip_Bd\eta=0\}
    \end{equation}
    we have introduced in this section it is an interesting endeavor to investigate \emph{twisted minimal foliations}, generalizations of twisted geodesic foliations for calibrations with \(k\geq 2\).
\end{remark}

\section{Conformal Change of Metric}
The problem of minimizing the \(L^2\)-norm of a magnetic field in \(\RR^3\) can be approached by introducing a conformal change of the form \(|B|^2 g\), for a non-vanishing magnetic field \(B\)~\cite{Zhang2013Gluing, Padilla22filament,Gross:2023:PK}. This particular (\(B\)-dependent) conformal factor has interesting consequences and explicitly ties together seemingly unrelated fields. More specifically, it turns out that the KKT-conditions for the \(L^1\)-optimization problems can equivalently be derived from the Euler-Lagrange equations for the \(L^2\)-optimization problems by applying a conformal change of metric.

Consider a closed flux form \(\beta\) and a representative of the conformal class \(\bowg\in[\bowg]\). From these given objects, we may construct a conformally changed metric \(\barg\in[\bowg]\) on the support of \(\beta\) by defining
\begin{align}
    \label{eq:ConformalChange}
    \barg\coloneqq|\beta|^2_{\bowg}\,\bowg.
\end{align}
This conformal change determines transformation rules for all metric dependent objects which we defined in \cref{sec:FluxFormsInRiemannianGeometry}: denoting the volume forms induced by the respective metrics by \(\bowmu\) \resp\@ \(\barmu\), the vector fields \(\bowB,\barB\) associated to a \(\beta\) are determined by 
\begin{align}
    \label{eq:BBowBarDefinition}
    \beta = \iota_{\bowB}\bowmu = \iota_{\barB}\barmu\,.
\end{align}
For \(n\geq 3\) they can be expressed in terms of one another as
\begin{align}
    \label{eq:BTransformation}
    \bowB = |\barB|_{\barg}^{-\frac{n}{n-2}}\barB,\quad \barB = |\bowB|_{\bowg}^{-n}\bowB\,,
\end{align}
whereas the corresponding volume forms satisfy
\begin{align}
    \label{eq:MuTransformation}
    \bowmu = |\barB|_{\barg}^{\frac{n}{n-2}}\barmu,\quad \barmu = |\bowB|_{\bowg}^{n}\bowmu
\end{align}
and therefore
\begin{align}
    \label{eq:ConformalFactor}
    |\beta|_{\bowg}=|\bowB|_{\bowg}=|\barB|_{\barg}^{-\frac{1}{n-2}}\,,\quad
    |\beta|_{\barg}=|\barB|_{\barg} = |\bowB|_{\bowg}^{-(n-2)}\,.
\end{align}
Moreover we have
\begin{align}
    \label{eq:HodgeBetaTransformation}
    \bowstar\beta = |\barB|_{\barg}^{-1}\barstar\beta\,,\quad \barstar\beta = |\bowB|_{\bowg}^{-(n-2)}\bowstar\beta\,.
\end{align}

\subsection{Conformal Transformations of Stationary Conditions}
Having established the transformation rules for the individual objects in the Euler-Lagrange equations for \(n\geq 3\), we may derive the corresponding stationary conditions with respect to the conformally changed metric.

Let \(\beta\) be a closed flux form and exact harmonic with respect to \(\bowg\). Then there exists \(\phi\in C^\infty(M)\) such that \(\bowstar\beta=d\phi\) and by \cref{eq:HodgeBetaTransformation}, whenever \(\beta\) is non-zero, we have
\begin{align}
    d\phi = \bowstar\beta = |\barB|_{\barg}^{-1}\barstar\beta.
\end{align}
Globally, this can be stated by saying that there exists \(\phi\in C^\infty(M)\) such that \(d\phi\) is a normalization of \(\barstar\beta\), \ie, \(\beta\) is exact eikonal with respect to \(\barg\).

Similarly, let \(\beta\) be a closed flux form and harmonic with respect to \(\bowg\). Then \(d\bowstar \beta=0\) and by \cref{eq:HodgeBetaTransformation}, whenever \(\beta\) is non-zero, we have
\begin{align}
    0 = d\bowstar\beta = d(|\barB|_{\barg}^{-1}\barstar\beta),
\end{align}
which can be globally stated by asking for the existence of a closed normalization \(\bareta\in \Omega^1(M)\) of \(\barstar\beta\), \ie, \(\beta\) is eikonal with respect to \(\barg\) (see also~\cite{Zhang2013Gluing}).

Finally, let \(\beta\) be a closed flux form which is force-free with respect to \(\bowg\). Then \( \iota_{\bowB} d\bowstar \beta=0\) and by \cref{eq:HodgeBetaTransformation}, whenever \(\beta\) is non-zero, we have
\begin{align}
    0 = \iota_{\bowB}d\bowstar\beta = |\barB|^{-\frac{n}{n-2}}_{\barg}\,\iota_{\barB}d(|\barB|_{\barg}^{-1}\barstar\beta)\,.
\end{align}
This can be stated globally by asking for the existence of a normalization of \(\bareta\in \Omega^1(M)\) of \(\barstar\beta\) which satisfies \(0=\iota_{\barB}d\bareta\), \ie, the vector field \(\barB\) associated with \(\beta\) forms---up to reparametrization---a geodesic foliation. 

\subsection{Main Theorem}
Considering the squared \(L^2\)-norm of a flux form and apply the conformal change of metric we have
\begin{align}
    \label{eq:ChangeOfMetricInIntegral}
    \|\beta\|_{L^2,\bowg}^2 = \int_M |\bowB|_{\bowg}^2\,\bowmu = \int_M|\barB|_{\barg}\,\barmu = \|\beta\|_{L^1,\barg}. 
\end{align}
Moreover, we note that the constraints and boundary conditions in \cref{thm:VariationalExactHarmonic,thm:VariationalHarmonic,thm:VariationalBeltrami} were expressed independent of a metric. Therefore, after fixing the respective metrics, we conclude
\begin{theorem}
    \label{thm:L2ToL1}
    For \(n\geq 3\), after the conformal change of metric \(\barg = |\beta|_{\bowg}^2\,\bowg\), stationary points of the squared \(L^2\)-norm with respect to \(\bowg\) become stationary points of the \(L^1\)-norm with respect to \(\barg\) with the same constraints and boundary conditions and vice versa.
\end{theorem}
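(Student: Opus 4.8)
The plan is to establish \thmref{thm:L2ToL1} by verifying that each of the three $L^2$-stationarity conditions (\cref{thm:VariationalExactHarmonic,thm:VariationalHarmonic,thm:VariationalBeltrami}) transforms, under the conformal change $\barg = |\beta|_{\bowg}^2\,\bowg$, precisely into the matching $L^1$-stationarity condition (\cref{thm:VariationalExactEikonal,thm:VariationalEikonal,thm:VariationalTwistedGeodesic}). The key observation enabling this is the integral identity \cref{eq:ChangeOfMetricInIntegral}, which already shows $\|\beta\|_{L^2,\bowg}^2 = \|\beta\|_{L^1,\barg}$ as functionals of $\beta$, so that variations of one coincide with variations of the other. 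Since the three classes of admissible variations---unconstrained closed variations, homologically constrained variations $\mathring\beta = d\alpha$, and isotopy-constrained variations $\mathring\beta = -\LD_\xi\beta$---are all defined purely in terms of $\beta$ and differential-topological data (exterior derivative, Lie derivative, pullback to $\partial M$) with no reference to any metric, the admissible variation spaces are literally identical for both functionals. Hence a flux form is $\bowg$-stationary for the squared $L^2$-norm within a given variation class if and only if it is $\barg$-stationary for the $L^1$-norm within the same class.

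The first step I would carry out is to make the functional identity \cref{eq:ChangeOfMetricInIntegral} rigorous on the support of $\beta$, using the transformation rules \cref{eq:MuTransformation,eq:ConformalFactor}: there $|\bowB|_{\bowg}^2\,\bowmu = |\barB|_{\barg}^{-2/(n-2)}\cdot|\barB|_{\barg}^{n/(n-2)}\barmu = |\barB|_{\barg}\,\barmu$, confirming the pointwise integrand match. I would emphasize that the conformal factor $|\beta|_{\bowg}^2$ is itself a function of $\beta$, so the map $\beta \mapsto \barg$ is not a fixed background change of metric but is tied to the very form being varied; nonetheless, because both functionals are being compared \emph{as functionals of $\beta$} with the identical value at each $\beta$, their first variations along any admissible family agree verbatim. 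The second step is then to invoke the derivations in \cref{sec:HierarchyOfL2,sec:HierarchyOfL1}: since the Euler--Lagrange/KKT computations there produce, respectively, the $\bowg$-exact-harmonic, $\bowg$-harmonic, and $\bowg$-force-free conditions on one side and the $\barg$-exact-eikonal, $\barg$-eikonal, and $\barg$-twisted-geodesic conditions on the other, matching them pairwise via the preceding subsection's pointwise transformations (\cref{eq:HodgeBetaTransformation} and the normalization identities) closes the equivalence.

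The main obstacle I anticipate is the behavior at the zero set of $\beta$, where $\barg = |\beta|_{\bowg}^2\,\bowg$ degenerates and is only defined on the support of $\beta$; the transformation rules \cref{eq:BTransformation,eq:MuTransformation,eq:HodgeBetaTransformation} and the identity \cref{eq:ChangeOfMetricInIntegral} are all stated only ``whenever $\beta$ is non-zero.'' The resolution is precisely the subdifferential formalism of \cref{sec:HierarchyOfL1}: the $L^1$ functional and its stationarity conditions are formulated globally through the subdifferential $\partial|\beta|$ and the notion of normalization (\defref{def:Normalization}), which remain well-defined across the vanishing locus even though the pointwise normalized covector does not. Thus, rather than trying to extend the degenerate metric $\barg$ across $\{\beta = 0\}$, I would phrase the final equivalence entirely at the level of the $\beta$-intrinsic functional identity \cref{eq:ChangeOfMetricInIntegral} together with the already-proven variational characterizations, so that the correspondence holds globally and the singular set is handled by the $L^1$ theory's built-in convexity machinery rather than by any metric surgery.

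<br>

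Finally, I would assemble these observations into the theorem statement: for $n\ge 3$ and for each of the three constraint classes, $\beta$ is a stationary point of $\|\cdot\|_{L^2,\bowg}^2$ if and only if it is a stationary point of $\|\cdot\|_{L^1,\barg}$, with the passage between the paired defining conditions supplied by the transformation rules of the previous subsection; the ``vice versa'' direction is automatic since the conformal change $\barg = |\beta|^2_{\bowg}\bowg$ is invertible on the support via $\bowg = |\beta|_{\barg}^{2/(n-2)}\,\barg$ (read off from \cref{eq:ConformalFactor}), so the argument is symmetric in the two metrics.
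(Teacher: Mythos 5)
Your overall architecture coincides with the paper's: the transformation rules, the metric-independence of the three variation classes, the identity \cref{eq:ChangeOfMetricInIntegral}, the pairwise matching of the Euler--Lagrange conditions of \cref{sec:HierarchyOfL2} with the KKT conditions of \cref{sec:HierarchyOfL1} through \cref{eq:HodgeBetaTransformation}, and the normalization/subdifferential formalism at the zero set. Your pointwise integrand check and the inverse formula \(\bowg=|\beta|_{\barg}^{2/(n-2)}\,\barg\) for the ``vice versa'' direction are both correct. However, what you present as the ``key observation'' is not a valid deduction, and your proposal only succeeds because your second step independently reproduces the paper's actual argument.

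The concrete error: the identity \(\|\beta\|^2_{L^2,\bowg}=\|\beta\|_{L^1,\barg}\) holds \emph{as an identity of functionals} only if the metric on the right-hand side is recomputed as \(|\tilde\beta|^2_{\bowg}\,\bowg\) at every competitor \(\tilde\beta\). The theorem, by contrast, concerns stationarity of the \(L^1\)-norm with \(\barg\) \emph{frozen} at the value determined by the candidate point \(\beta\). These are different functionals: with \(\barg\) fixed one has \(\|\tilde\beta\|_{L^1,\barg}=\int_M|\beta|_{\bowg}\,|\tilde\beta|_{\bowg}\,\bowmu\), which is positively homogeneous of degree one in \(\tilde\beta\) and agrees with the quadratic functional \(\|\tilde\beta\|^2_{L^2,\bowg}\) only at \(\tilde\beta=\beta\) itself. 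Equal values at a single point imply nothing about first variations, so ``their first variations along any admissible family agree verbatim'' does not follow: the variation of the functional with \(\beta\)-dependent metric splits into the frozen-metric variation plus a term coming from the variation of the metric, and a priori that extra term could cancel or distort the frozen-metric one. What makes the theorem true is that the extra term is itself proportional to the frozen-metric term; concretely, on the support of \(\beta\) one computes that the first variation of \(\|\cdot\|^2_{L^2,\bowg}\) at \(\beta\) is \(2\int_M\mathring\beta\wedge\bowstar\beta\) while that of \(\|\cdot\|_{L^1,\barg}\) at \(\beta\) is \(\int_M\mathring\beta\wedge\bowstar\beta\), so the two stationarity conditions coincide. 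This proportionality is established not by the value identity but by exactly the pointwise Euler--Lagrange/KKT matching via \cref{eq:HodgeBetaTransformation} that your second step invokes (the paper's transformed stationary conditions). So either present that matching as the proof---which is what the paper does---or replace the ``verbatim'' claim by the explicit two-term variation computation above.
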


\begin{remark}[Flux-Forms with Non-Global Support]
It is well-known that stationary points of \(L^1\)-optimization problems, such as Beckmann optimal transport problems, typically exhibit sparse support (\cite{Santambrogio:2015:OT}, \cref{fig:L1vsL2}). Specifically, for points \(p\in M\) where  \(\beta\) vanishes, it is not possible to define a non-degenerate metric using \(|\beta|^2\) as a conformal factor. However, the integrity of our theory, which focuses on the geometry of field lines, remains unaffected. The concept of a field line associated with a flux form inherently assumes that the flux form is non-vanishing. Consequently, all the theory and results presented in this paper are only well-defined within the support of the flux form and whenever one of the integrals in \cref{eq:ChangeOfMetricInIntegral} is defined.
\end{remark}

Taking into account different constraints on the admissible variations we conclude:
\begin{theorem}
    \label{thm:MainTheorem1Formal}
    Let \(M\) be an \(n\)-dimensional conformal manifold, \(n\geq 3\), \(\beta\in\Omega^{n-1}(M)\) be a closed flux form with \(j^\ast_{\partial M}\beta=\beta_{\partial M}\) for given boundary conditions \(\beta_{\partial M}\in\Omega^{n-1}(M)\) and \(\bowg,\barg\in[\bowg]\) be related by \(\barg = |\beta|_{\bowg}^2\,\bowg\). Then:
    \begin{enumerate}
        \item \(\beta\) is force-free with respect to \(\bowg\) if and only if it is geodesic with respect to \(\barg\).
        \item \(\beta\) is harmonic with respect to \(\bowg\) if and only if it is eikonal with respect to \(\barg\).
        \item \(\beta\) is exact harmonic with respect to \(\bowg\) if and only if it is exact eikonal with respect to \(\barg\).
    \end{enumerate}
\end{theorem}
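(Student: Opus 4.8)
The plan is to prove \cref{thm:MainTheorem1Formal} by combining the variational characterizations of \cref{sec:HierarchyOfL2,sec:HierarchyOfL1} with the explicit conformal transformation rules \cref{eq:ConformalFactor,eq:HodgeBetaTransformation}, rather than manipulating the defining equations directly. The key observation is that the three pairs of equivalences in the theorem correspond exactly to the three pairs of constraint classes (unconstrained, homologically constrained, isotopy constrained) under which stationary points of the \(L^2\)- and \(L^1\)-norms were characterized. Since the constraints themselves are purely topological and hence metric-independent, the entire statement reduces to \cref{thm:L2ToL1} once the individual dictionary between the \(L^2\)-stationary conditions (with respect to \(\bowg\)) and the \(L^1\)-stationary conditions (with respect to \(\barg\)) is established.

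First I would verify the integral identity \cref{eq:ChangeOfMetricInIntegral}, namely \(\|\beta\|_{L^2,\bowg}^2 = \|\beta\|_{L^1,\barg}\), using the volume-form and norm transformations \cref{eq:MuTransformation,eq:ConformalFactor}; this makes the correspondence between the two functionals precise and shows why the conformal factor \(|\beta|_{\bowg}^2\) is the natural one. Next I would handle the three items in turn, matching each \(L^2\)-theorem to its \(L^1\)-counterpart: item (iii) pairs \cref{thm:VariationalExactHarmonic} with \cref{thm:VariationalExactEikonal} (exact harmonic \(\leftrightarrow\) exact eikonal), item (ii) pairs \cref{thm:VariationalHarmonic} with \cref{thm:VariationalEikonal} (harmonic \(\leftrightarrow\) eikonal), and item (i) pairs \cref{thm:VariationalBeltrami} with \cref{thm:VariationalTwistedGeodesic} (force-free \(\leftrightarrow\) twisted geodesic). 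For each, the argument is: being force-free/harmonic/exact harmonic with respect to \(\bowg\) means \(\beta\) is \(L^2\)-stationary under the relevant constraint; by \cref{thm:L2ToL1} this is equivalent to \(\beta\) being \(L^1\)-stationary with respect to \(\barg\) under the same constraint; and by the corresponding \(L^1\)-theorem this is the geodesic/eikonal/exact eikonal condition. In fact the explicit computations already carried out in \S\,``Conformal Transformations of Stationary Conditions'' supply the pointwise verification for each case via \cref{eq:HodgeBetaTransformation}, so I would cite those displayed identities to confirm that the transformed KKT-condition \(\iota_{\barB}d\bareta = 0\) (for a normalization \(\bareta\) of \(\barstar\beta\)) is exactly the geodesic condition from \cref{thm:GeodesicIffNormalizationProp}, and analogously that the closed (\resp\@ exact) normalization conditions characterize the eikonal (\resp\@ exact eikonal) cases.

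The main obstacle I anticipate is bookkeeping at the vanishing set of \(\beta\): the conformal factor \(|\beta|_{\bowg}^2\) degenerates where \(\beta=0\), so \(\barg\) is only defined on the support of \(\beta\), and the normalization \(\partial|\beta|\) is genuinely set-valued there. I would address this by restricting all statements to the support of \(\beta\), exactly as flagged in the Flux-Forms-with-Non-Global-Support remark, noting that the notion of field lines (and hence of being geodesic/eikonal) is only meaningful where \(\beta\neq 0\); the subtlety is then confined to checking that the global conditions (existence of a closed or exact normalization) transform correctly, which is precisely what the displayed computations accomplish by passing from the pointwise identity \(d\phi = |\barB|_{\barg}^{-1}\barstar\beta\) to the global statement. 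The remaining care is in the force-free case, where the factor \(|\barB|_{\barg}^{-n/(n-2)}\) multiplying \(\iota_{\barB}d(|\barB|_{\barg}^{-1}\barstar\beta)\) is nonzero on the support, so it may be divided out without affecting the vanishing condition.
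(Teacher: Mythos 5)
Your proposal is correct and takes essentially the same approach as the paper: the paper also proves \cref{thm:MainTheorem1Formal} by pairing the \(L^2\)-variational characterizations (\cref{thm:VariationalExactHarmonic,thm:VariationalHarmonic,thm:VariationalBeltrami}) with their \(L^1\)-counterparts (\cref{thm:VariationalExactEikonal,thm:VariationalEikonal,thm:VariationalTwistedGeodesic}) through \cref{thm:L2ToL1} and the identity \cref{eq:ChangeOfMetricInIntegral}, backed by the same explicit pointwise computations via \cref{eq:HodgeBetaTransformation} and the identification of the KKT-condition with the geodesic condition of \cref{thm:GeodesicIffNormalizationProp}. Your handling of the vanishing set of \(\beta\) likewise mirrors the paper's remark on flux forms with non-global support.
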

Note that, contrasting previous work (see,\eg, \cite{cardona_2021, cardona2021geometry}), \cref{thm:MainTheorem1Formal} holds in arbitrary dimensions \(n\geq 3\) while still preserving the equivalence results. Moreover, in agreement with the 3-dimensional theory, our defining equations emerge as the Euler-Lagrange equations of variational principles and retain the known inclusions of the special cases from each other.

\begin{corollary}
\label{thm:UnitBetaBeltramiGeodesic}
If \(|\beta| = 1\), then \(\beta\) is force-free if and only if \(\beta\) is geodesic.
\end{corollary}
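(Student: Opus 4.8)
The plan is to exploit that the hypothesis \(|\beta|=1\) trivializes the \(\beta\)-dependent conformal factor, so that the two metrics in \cref{thm:MainTheorem1Formal} coincide. First I would record that the associated vector field satisfies \(\beta=\iota_B\mu\) with \(\star\beta=(-1)^{n-1}B^\flat\); since contraction with the volume form is a pointwise isometry, \(|\beta|=|B|\), and so \(|\beta|=1\) says exactly that \(|B|\) is constant. The conformal change \(\barg=|\beta|_{\bowg}^2\,\bowg=\bowg\) is then the identity map on metrics, and \cref{thm:MainTheorem1Formal}(i) immediately gives that \(\beta\) is force-free with respect to \(\bowg\) if and only if it is geodesic with respect to the same metric. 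This is the one-line route and is the reason the corollary sits directly after the main theorem.

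For a self-contained verification that does not route through the full conformal machinery, I would instead apply \cref{thm:WadsleyLemma} directly to \(X=B\). Because \(|B|\) is constant we have \(d|B|^2=0\), so \cref{eq:WadsleyFormula} collapses to \(\iota_B dB^\flat=(\nabla_B B)^\flat\). Using \(d\star\beta=(-1)^{n-1}dB^\flat\), the force-free condition \(\iota_B d\star\beta=0\) of \cref{def:Beltrami} is equivalent to \(\iota_B dB^\flat=0\), which by this collapsed identity is equivalent to \(\nabla_B B=0\).

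It remains to match this with the geodesic condition \(\nabla_B B=\rho B\) of \cref{def:GeodesicVF}, and the single genuine obstacle in an otherwise immediate argument is to rule out a nonzero proportionality factor \(\rho\). Here I would pair \(\nabla_B B=\rho B\) with \(B\) in the metric and use compatibility: constancy of \(|B|\) yields \(g(\nabla_B B,B)=\tfrac12\,\iota_B d|B|^2=0\), hence \(\rho\,|B|^2=0\), and since \(|B|=1\neq 0\) we conclude \(\rho=0\). Thus the geodesic condition is also equivalent to \(\nabla_B B=0\), and both notions coincide, which is precisely the claimed equivalence.
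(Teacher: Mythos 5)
Your proposal is correct, and your first paragraph is precisely the paper's (implicit) argument: the corollary is stated without proof immediately after \cref{thm:MainTheorem1Formal} because \(|\beta|_{\bowg}=1\) makes the conformal change \(\barg=|\beta|_{\bowg}^2\,\bowg\) trivial, so force-free and geodesic are equivalent with respect to one and the same metric. Your self-contained verification via \cref{thm:WadsleyLemma} is also sound, and it correctly handles the one real subtlety, namely that \cref{def:GeodesicVF} permits a nonzero proportionality factor \(\rho\): pairing \(\nabla_BB=\rho B\) with \(B\) and using constancy of \(|B|\) forces \(\rho=0\). The only streamlining worth noting is that the paper already packages this step for you: \cref{thm:GeodesicIff} says \(\beta\) is geodesic iff \(\iota_Bd\bigl(\tfrac{\star\beta}{|\star\beta|}\bigr)=0\), and with \(|\star\beta|=1\) this is literally the force-free condition \(\iota_Bd\star\beta=0\) of \cref{def:Beltrami}, so the direct route can bypass Wadsley's lemma and the \(\rho\)-argument entirely. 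What your longer route buys is transparency about where constancy of the norm enters (killing both the \(d|B|^2\) term and the reparametrization freedom), which the one-line conformal argument hides inside the machinery of \cref{thm:L2ToL1}.
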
 
\begin{figure}[h]
    \centering
    \includegraphics[width = \textwidth]{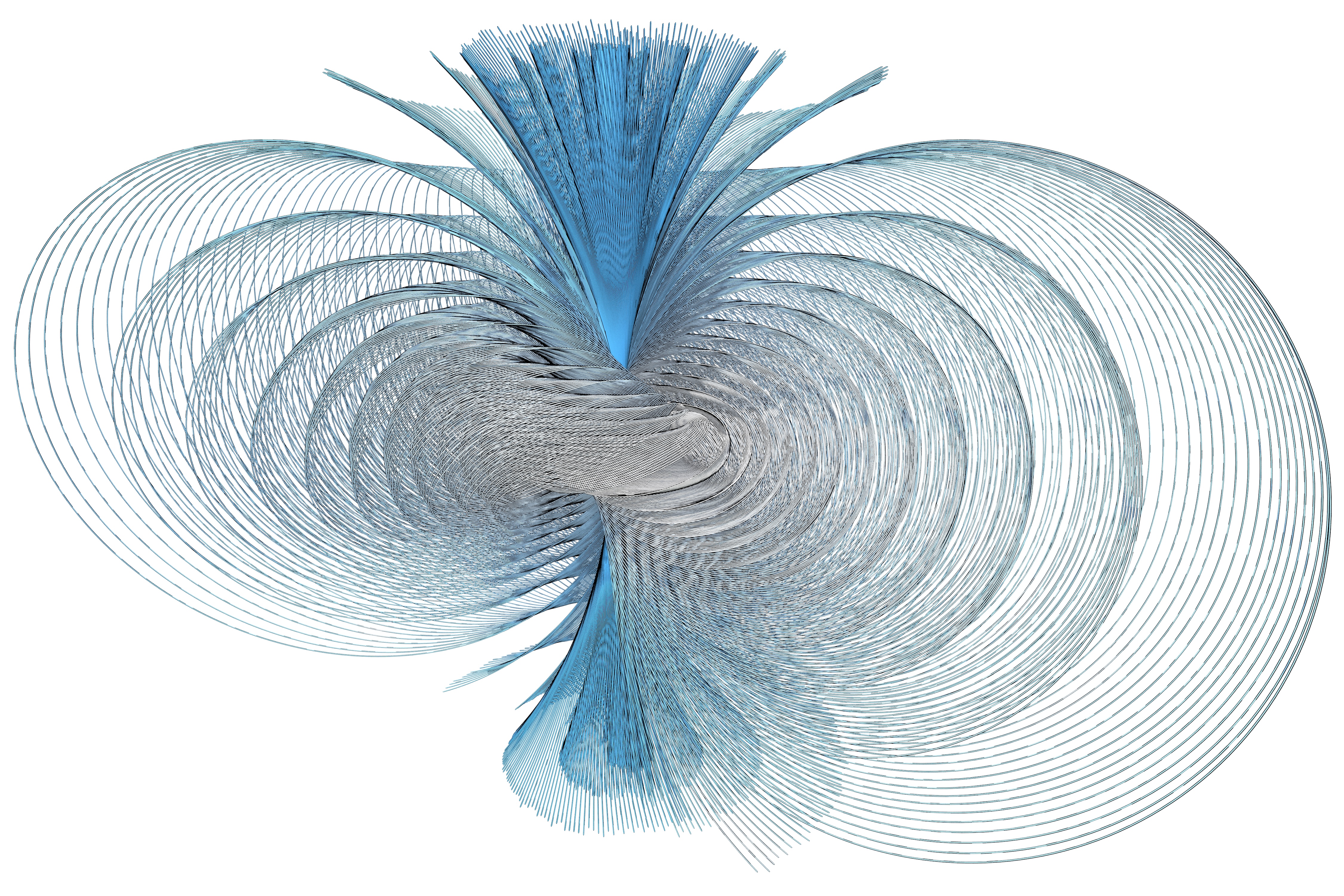}
    \caption{Depicted is the \emph{Hopf fibration}, \ie, the stereographic projection of the field lines of the \emph{Hopf field} onto \(\RR^3\). It is a geodesic and Killing vector field of unit length and therefore force-free, by \cref{thm:ConstKillingIffGeodesic} and \cref{thm:UnitBetaBeltramiGeodesic}.}
    \label{fig:HopfFibration}
\end{figure}
\begin{example}[Hopf Fibration]\label{exp:HopfFibration}
A non-trivial example for \cref{thm:UnitBetaBeltramiGeodesic} is given by the Hopf fibration (\cref{fig:HopfFibration}), which is obtained by stereographic projection of the Hopf field 
\[X_{\rm Hopf}= (-x_2,x_1,-x_4,x_3)\in \Gamma TS^3\] 
on the round \(3\)-sphere \(S^3=\{x\in\RR^4\mid x_1^2 + x_2^2 + x_3^2 + x_4^2 = 1\}\subset\RR^4\) (\figref{fig:HopfFibration}). Since the Hopf field is divergence-free, has unit length and great circles as its integral curves, by~\cref{thm:UnitBetaBeltramiGeodesic}, \(X\) is force-free (see also~\cite{Smiet2017IRHopf}). The Hopf fibration is considered, \eg, when studying so-called Hopfions in electromagnetism~\cite{PhysRevLett.111.150404} or knotted structures in ideal plasma~\cite{Smiet2017IRHopf}.
\end{example}

Previous work \cite{Etnyre:2000:CTH, rechtman_2010, cardona2021geometry} based on results on the geodesibility of vector fields by \cite{Gluck:79:OL, sullivan1978foliation} already allows to conclude an equivalence between force-free fields and geodesible vector fields in the following sense: if there is a Riemannian metric for which a vector field is force-free, then there is a metric for which the vector field is geodesic. However, the two metric are in no relation whatsoever. Our \cref{thm:MainTheorem1Formal} provides an explicit relation between the relevant metrics, thus extending the previous work. In particular, \cref{thm:UnitBetaBeltramiGeodesic} reveals for when the two metrics even coincide.

These more explicit statements are relevant from a practical point of view. For example, based on a structure-preserving discretization, \cite{Padilla22filament, Gross:2023:PK} have reduced a numerically challenging~\cite{Dixon89Agot} volumetric energy minimization with free boundary conditions to a problem of minimizing the length of curves in a conformally changed metric corresponding to our theory.

\subsection{The Surface Case}
In the case that \(M\) is a surface, \ie, \(n=2\), we find that only one implication of the equivalences in \thmref{thm:MainTheorem1Formal} holds. The reason for that is that the essential tool for the proof of \thmref{thm:L2ToL1} is the transformation of the Hodge stars under a conformal change of metric. However, the Hodge star on \(1\)-forms on a \(2\)-dimensional manifold is conformally invariant~\cite{besse2007einstein}. Therefore, for a \(2\)-dimensional manifold harmonicity (\(d\beta=0\) and \(d\star\beta=0\)) is a conformally invariant notion and cannot be achieved by a conformal transformation. 

It is easy to see that in the \(2\)-dimensional case, force-freeness and harmonicity is equivalent. This only leaves (exact) harmonic fields for our consideration. 

\begin{corollary}
	Let \(M\) be a \(2\)-dimensional conformal manifold, \(\beta\in\Omega^{n-1}(M)\) be a closed flux form with \(j^\ast_{\partial M}\beta=\beta_{\partial M}\) for given boundary conditions \(\beta_{\partial M}\in\Omega^{n-1}(M)\) and \(\bowg,\barg\in[\bowg]\) be related by \(\barg = |\beta|_{\bowg}^2\,\bowg\). Then, if \(\beta\) is (exact) harmonic with respect to \(\bowg\), \(\beta\) is (exact) eikonal with respect to \(\barg\).
\end{corollary}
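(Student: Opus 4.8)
The plan is to bypass the dimension-dependent transformation formulas \cref{eq:BTransformation,eq:MuTransformation,eq:HodgeBetaTransformation}, which all degenerate at \(n=2\) (they divide by \(n-2\)), and instead argue directly from the single structural fact that survives on a surface: the Hodge star on \(1\)-forms is conformally invariant. Since \(n=2\), the flux form \(\beta\) is itself a \(1\)-form and \(\star\beta\) is again a \(1\)-form, so this invariance applies to \(\star\beta\). Writing \(\barg = e^{2u}\bowg\) with \(e^{2u}=|\beta|^2_{\bowg}\), the first step is to record that \(\barstar\beta = \bowstar\beta\) on the support of \(\beta\).

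First I would compute how the pointwise norm of a \(1\)-form transforms. Because \(\barg^{-1}=e^{-2u}\bowg^{-1}\), any \(\omega\in\Omega^1(M)\) satisfies \(|\omega|_{\barg}=e^{-u}|\omega|_{\bowg}=|\beta|_{\bowg}^{-1}|\omega|_{\bowg}\) on the support of \(\beta\). Applying this to \(\omega=\bowstar\beta\) and using that the Hodge star is a pointwise isometry (\(|\bowstar\beta|_{\bowg}=|\beta|_{\bowg}\)), I obtain the crucial normalization
\[
|\barstar\beta|_{\barg}=|\beta|_{\bowg}^{-1}\,|\bowstar\beta|_{\bowg}=|\beta|_{\bowg}^{-1}\,|\beta|_{\bowg}=1
\]
at every point where \(\beta\neq 0\). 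This is exactly where the special structure of \(n=2\) enters: the conformal factor \(|\beta|^2_{\bowg}\) is tuned so that \(\barstar\beta\) has unit \(\barg\)-norm, hence is its own normalization.

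With this in hand the two claims follow immediately from \cref{def:Eikonal}. Setting \(\bareta\coloneqq\barstar\beta=\bowstar\beta\), the identity \(|\barstar\beta|_{\barg}\,\bareta=\barstar\beta\) holds with \(|\bareta|_{\barg}=1\le 1\), so \(\bareta\) is a normalization of \(\barstar\beta\). If \(\beta\) is harmonic with respect to \(\bowg\), then \(d\bowstar\beta=0\), whence \(d\bareta=0\) and \(\bareta\) is a \emph{closed} normalization, so \(\beta\) is eikonal with respect to \(\barg\). If \(\beta\) is exact harmonic, then \(\bowstar\beta=d\phi\) for some \(\phi\in C^\infty(M)\), so \(\bareta=d\phi\) is an \emph{exact} normalization and \(\beta\) is exact eikonal with respect to \(\barg\).

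I expect the only real subtleties to be bookkeeping rather than genuine obstacles: the argument is confined to the support of \(\beta\), where the conformal change is non-degenerate, consistent with the remark on flux forms with non-global support. The conceptual point worth emphasizing is precisely why the converse is unavailable here, in contrast to \cref{thm:MainTheorem1Formal}: conformal invariance of \(\star\) on \(1\)-forms makes both \(d\beta=0\) and \(d\barstar\beta=d\bowstar\beta\) metric-independent within the conformal class, so harmonicity can neither be created nor destroyed by a conformal change, and an eikonal but non-harmonic \(\beta\) cannot be rendered harmonic.
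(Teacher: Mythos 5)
Your proof is correct and is essentially the paper's own argument: the paper proves this corollary by invoking the corresponding direction of \cref{thm:MainTheorem1Formal}, which constructs exactly the normalization you do, namely \(\bareta=\bowstar\beta\), and transfers its closedness (resp.\ exactness) to conclude that \(\beta\) is eikonal (resp.\ exact eikonal) with respect to \(\barg\). One small correction to your framing: the transformation formulas do not \emph{all} degenerate at \(n=2\)---the forward ones, \(\barstar\beta=|\bowB|_{\bowg}^{-(n-2)}\,\bowstar\beta\) and \(|\beta|_{\barg}=|\bowB|_{\bowg}^{-(n-2)}\), remain valid there and reduce precisely to the two facts you re-derive (\(\barstar\beta=\bowstar\beta\) and \(|\barstar\beta|_{\barg}=1\) on the support of \(\beta\)); it is only the inverse formulas carrying \(1/(n-2)\) exponents that break down, which is another way of seeing why only one implication survives in two dimensions.
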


\begin{proof}
	The proof is analogous to the corresponding direction to proof \thmref{thm:MainTheorem1Formal}.
\end{proof}

\begin{figure}[h]
    \centering
    \includegraphics[width = .85\textwidth]{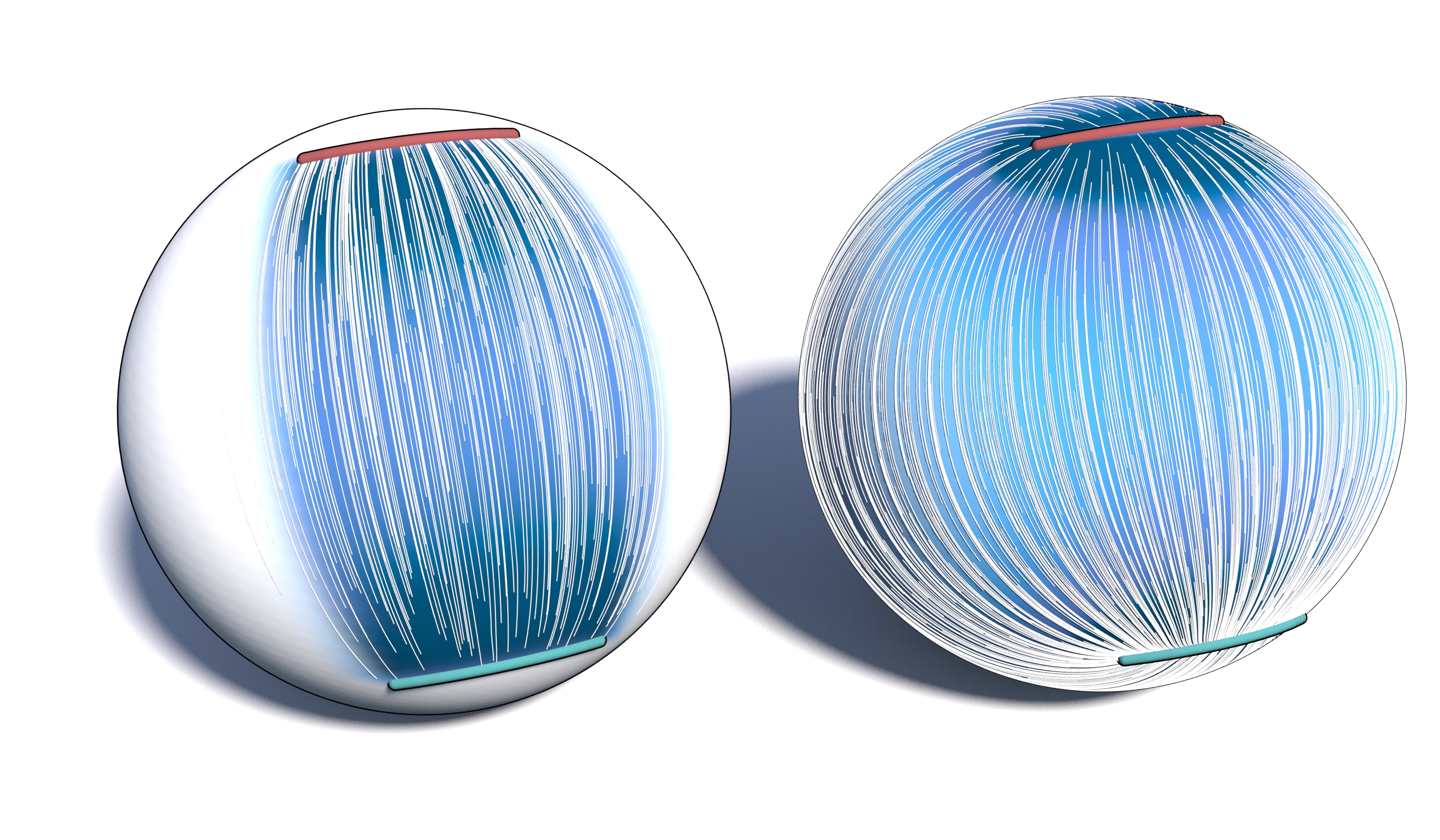}
    \caption{Vector fields \(B\in\Gamma(S^2)\) minimizing the \(L^1\)-, \resp\@ \(L^2\)-norm with boundary conditions given by a source (red) and a sink (blue).
    }
    \label{fig:L1vsL2}
\end{figure}

\begin{theorem}
	Let \(M\) be a \(2\)-dimensional conformal manifold, \(\beta\in\Omega^{n-1}(M)\) be a closed and eikonal flux form with \(j^\ast_{\partial M}\beta=\beta_{\partial M}\) for given boundary conditions \(\beta_{\partial M}\in\Omega^{n-1}(M)\). 
	Then \(\beta\) is harmonic if and only if either of the two conditions holds:
	\begin{enumerate}
		\item \(|\beta|_g\) is constant. \label{item:HarmonicityProofPartA}
		\item \(\grad |\beta|_g\) and \(B\) are parallel. \label{item:HarmonicityProofPartB}
	\end{enumerate}
	In particular, if \(\beta\) is harmonic it is harmonic with respect to any metric in \([g]\).
\end{theorem}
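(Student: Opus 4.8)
The plan is to reduce the whole statement to a single differential identity that is valid under the eikonal hypothesis. Since \(\dim M = 2\), the flux form \(\beta\) is a \(1\)-form and \(\star\beta\) is again a \(1\)-form; set \(f\coloneqq|\beta|_g = |\star\beta|_g\). Being eikonal (\cref{def:Eikonal}) means there is a closed normalization \(\eta\in\Omega^1(M)\) of \(\star\beta\), which on the support of \(\beta\) coincides with the directional covector \(\star\beta/|\star\beta|\) (see the discussion following \cref{def:Normalization}). On the support I may therefore factor \(\star\beta = f\,\eta\) and differentiate, using the eikonal condition \(d\eta = 0\):
\[
d\star\beta = d(f\,\eta) = df\wedge\eta + f\,d\eta = df\wedge\eta.
\]
Thus \(\beta\) is harmonic, \ie\@ \(d\star\beta = 0\), if and only if \(df\wedge\eta = 0\) on the support of \(\beta\).

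It remains to identify \(df\wedge\eta = 0\) with the two listed alternatives. Because \(\star\beta = (-1)^{n-1}B^\flat = -B^\flat\) for \(n=2\), the normalization \(\eta\) is a nowhere-zero scalar multiple of \(B^\flat\) on the support; hence \(df\wedge\eta = 0\) is equivalent to \(df\wedge B^\flat = 0\), \ie\@ the pointwise linear dependence of \(df\) and \(B^\flat\). Raising indices, this is exactly the requirement that \(\grad|\beta|_g\) and \(B\) be parallel (allowing the zero vector), which the statement records as the two cases: the special case (i), where \(|\beta|_g\) is constant so that \(df\equiv 0\), and the general case (ii), \(\grad|\beta|_g\parallel B\). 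Combining with the previous paragraph yields the claimed equivalence.

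For the concluding assertion I would appeal to the conformal invariance of harmonicity in two dimensions, already observed above: \(d\beta = 0\) is metric independent, and \(d\star\beta = 0\) involves only the Hodge star on \(1\)-forms, which on a surface is unchanged under \(g\mapsto e^{2u}g\). Hence harmonicity of \(\beta\) is insensitive to the choice of representative in \([g]\). I expect no substantive computational difficulty here; the only point needing care is that the factorization \(\star\beta = f\,\eta\) and the normalization \(\eta\) live only on \(\operatorname{supp}\beta\), so both the identity and the equivalence must be read there, with harmonicity automatic on the open complement where \(\star\beta\equiv 0\). Managing this support restriction, rather than any calculation, is the main obstacle.
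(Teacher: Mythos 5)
Your proof is correct and follows essentially the same route as the paper's: factor the \(1\)-form into magnitude times the closed normalization, apply the Leibniz rule so the eikonal condition kills one term, identify \(df\wedge\eta=0\) with the two alternatives, and invoke conformal invariance of \(\star\) on \(1\)-forms in two dimensions. The only differences are cosmetic (you work with \(\star\beta\) and \(\eta\) rather than \(B^\flat\) and \(B^\flat/|B|_g\), which agree up to sign in \(n=2\)) plus your slightly more careful handling of the support, which the paper glosses over.
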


\begin{proof}
	By assumption \(0=d\beta=d\star B^\flat\). Thus, harmonicity of \(\beta\) is equivalent to \(dB^\flat=0\).
	Since \(\beta\) is eikonal, we have \(d(\tfrac{B^\flat}{|B|_g})=0\). Therefore,
	\[dB^\flat = (d|B|_g)\wedge(\tfrac{B^\flat}{|B|_g}).\]
	The right-hand side vanishes if and only if either \ref{item:HarmonicityProofPartA} or \ref{item:HarmonicityProofPartB} hold and since \(\star\) is conformally invariant, this is true for any metric in the conformal class.
\end{proof}

\section{Applications to Special Vector Fields}
In this section we showcase some examples where our theory provides insights about other special kinds of vector fields.
\subsection{Reeb Vector Fields}
On an orientable manifold of odd dimension \(2n+1\), \(\alpha\in\Omega^1(M)\) is said to be \emph{contact \(1\)-form} if
	\begin{equation*}
		\alpha\wedge(d\alpha)^n\neq 0.
	\end{equation*} 
 Any contact \(1\)-form describes a hyperplane distribution \(\Xi\coloneqq\ker \alpha\) and vice versa. The hyperplane distribution \(\Xi\) is referred to as a \emph{contact structure} on \(M\) and the pair \((M,\Xi)\) is a \emph{contact manifold}. Note that this relationship is not unique and any other contact \(1\)-form determining \(\Xi\) is of the form \(f\alpha\) for a non-vanishing \(f\in C^\infty(M)\). 

The standard example for a contact \(1\)-form on \(\RR^3\) is given by \(\alpha = dz + y\,dx\) (see \cref{fig:ReebAndContactStructure}).

\begin{figure}[h]
    \centering
    \includegraphics[width = .45\textwidth]{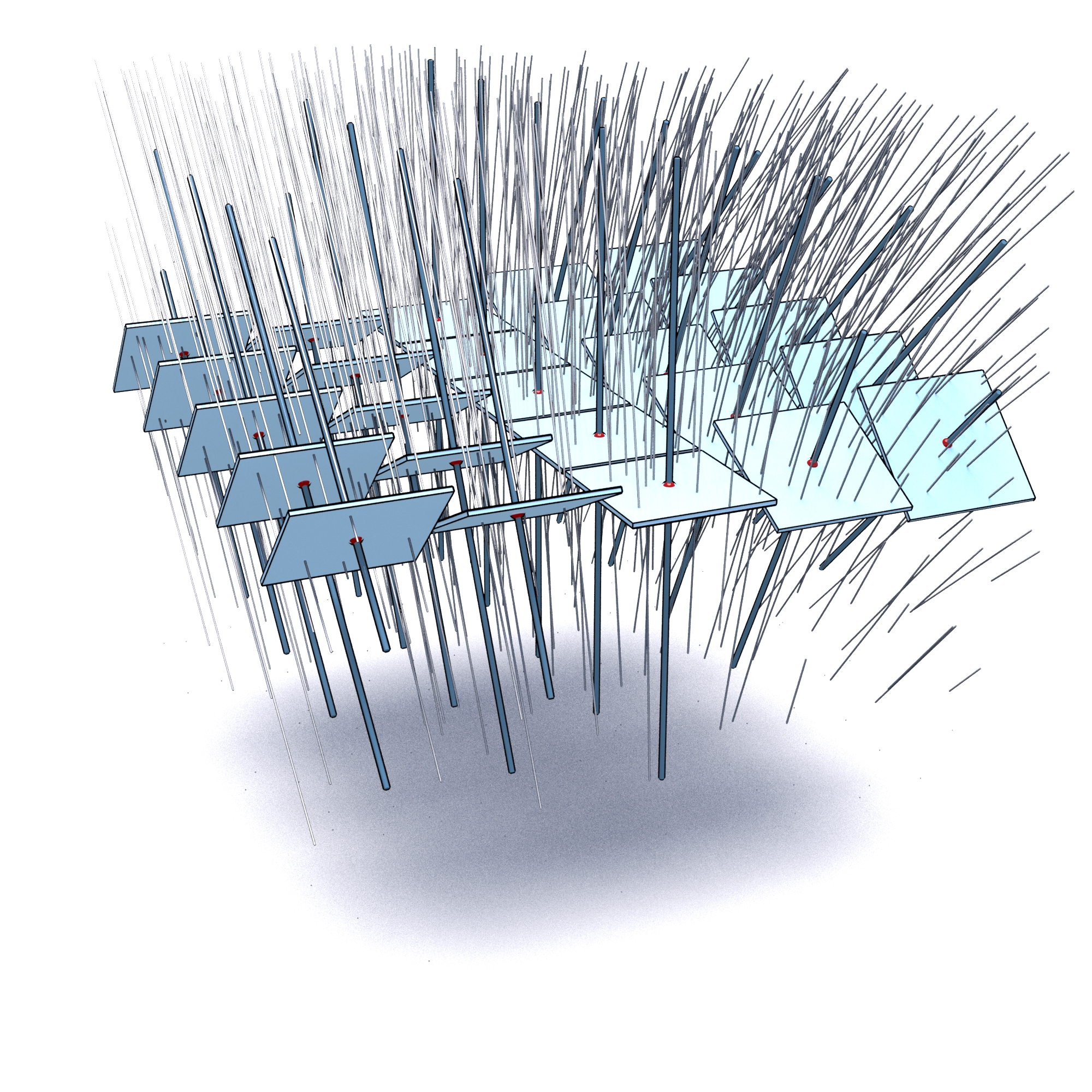}
    \includegraphics[width = .45\textwidth]{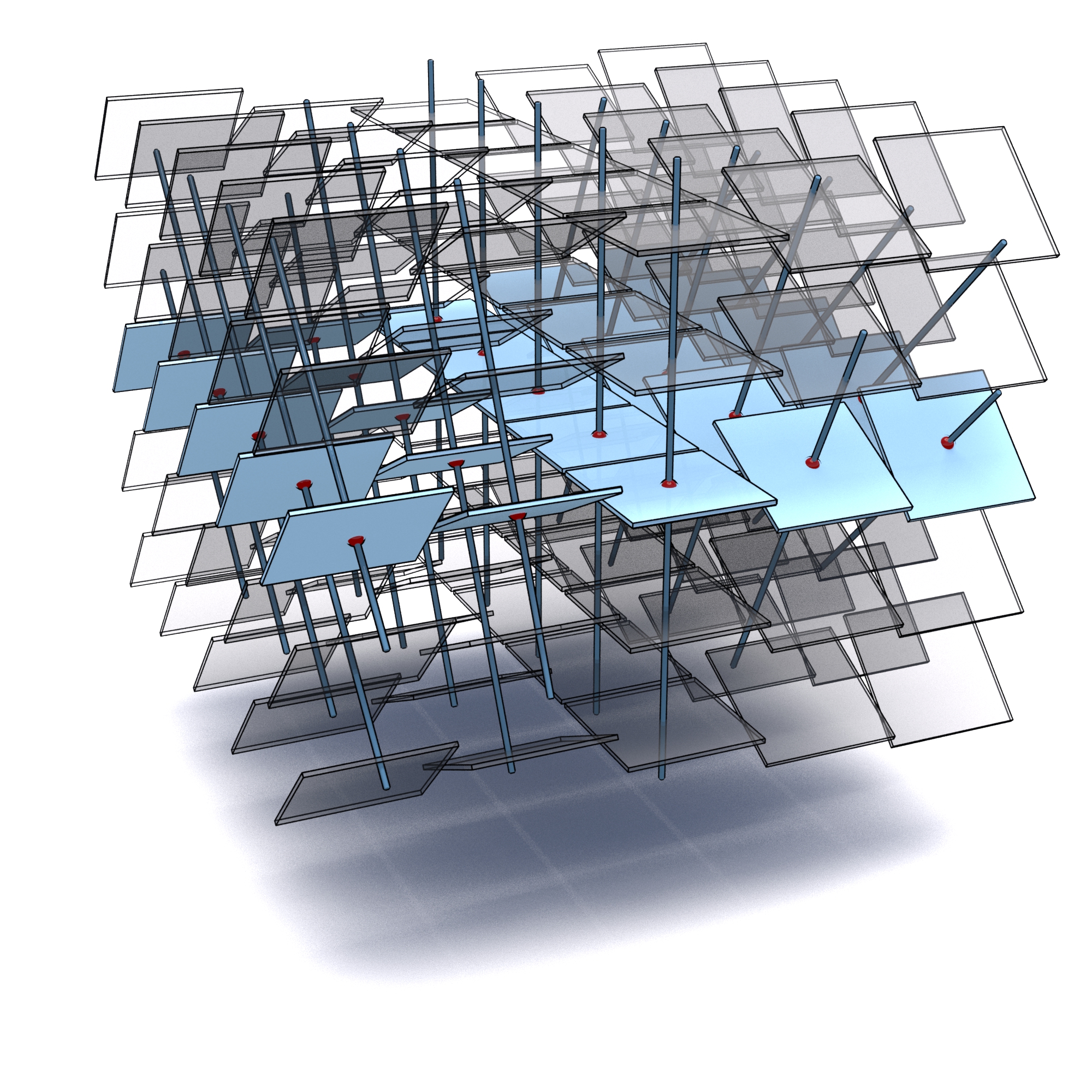}
    \caption{Field lines of the Reeb vector field corresponding to the contact 1-form \(\alpha = dz + y\,dx\) (left) and the corresponding contact structure of the contact manifold \((\RR^3,\ker\alpha)\).}
    \label{fig:ReebAndContactStructure}
\end{figure}

\begin{definition}
	 On an orientable manifold \(M\) of odd dimension \(2n+1\) with contact \(1\)-form \(\alpha\in\Omega^1(M)\), the vector field \(X\in\Gamma TM\) uniquely defined by 
	 \begin{align}\label{eq:ReebVectorFieldDefiningEq}
	 	\alpha(X)=1, \qquad X\in\ker(d\alpha).
	 \end{align}
  is called the \emph{Reeb vector field} of the contact \(1\)-form \(\alpha\).
\end{definition}
The Hopf fibration (\cref{exp:HopfFibration}) gives an example of a Reeb vector field of a contact manifold.

The following \cref{thm:ReebIsForceFreeWithNonVanishingF} states that there always exists a metric on a contact manifold \(M\) with respect to which the Reeb vector field of a corresponding contact \(1\)-form is geodesic.
\begin{theorem}[{\cite{Etnyre:2000:CTH}}]\label{thm:ReebIsForceFreeWithNonVanishingF}
	Let \(M\) be an orientable manifold of odd-dimension \(2n+1\) and \(X\in\Gamma TM\). Then \(X\) is the Reeb vector field of a contact structure \(\alpha\in\Omega^1(M)\) if and only if there is a Riemannian metric \(g\) on \(M\) such that \(X\) force-free with non-vanishing proportionality factor.
\end{theorem}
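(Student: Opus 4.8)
The plan is to prove the two implications separately, constructing in each direction the missing datum (a metric, resp.\ a contact form) from the splitting $TM = \RR X \oplus \ker$.

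For the forward implication, suppose $X$ is the Reeb field of a contact $1$-form $\alpha$, so $\alpha(X) = 1$ and $\iota_X d\alpha = 0$. I would build an adapted metric $g$ by declaring $X$ to have unit length, $X$ to be orthogonal to the contact distribution $\Xi = \ker\alpha$, and the metric on $\Xi$ to induce the volume form $(d\alpha)^n\vert_\Xi$ (possible after orienting so that $\alpha\wedge(d\alpha)^n > 0$). Then $X^\flat = \alpha$, so force-freeness $\iota_X dX^\flat = \iota_X d\alpha = 0$ is immediate from the Reeb condition. The induced volume form is $\mu = \alpha\wedge(d\alpha)^n$, and a direct contraction using $\iota_X\alpha = 1$ and $\iota_X d\alpha = 0$ gives $\iota_X\mu = (d\alpha)^n = (dX^\flat)^n = \iota_{\curl X}\mu$. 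Since $Y\mapsto\iota_Y\mu$ is injective, this yields $\curl X = X$; in particular $X$ is Beltrami with proportionality factor $\lambda \equiv 1 \neq 0$.

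For the converse, suppose $g$ makes $X$ force-free, $\iota_X dX^\flat = 0$, and Beltrami with $\curl X = \lambda X$, $\lambda$ nowhere zero; since Reeb fields are nonsingular I would take $X$ nowhere vanishing. Using $\iota_{\curl X}\mu = (dX^\flat)^n$ together with the identity $X^\flat\wedge\iota_X\mu = |X|^2\mu$ (which follows from $\iota_X(X^\flat\wedge\mu) = 0$), I would compute
\[
X^\flat\wedge(dX^\flat)^n = \lambda\, X^\flat\wedge\iota_X\mu = \lambda\,|X|^2\,\mu \neq 0,
\]
so $\alpha := X^\flat$ is a contact $1$-form. As $\iota_X dX^\flat = 0$ and $(dX^\flat)^n \neq 0$ forces $dX^\flat$ to have one-dimensional kernel $\RR X$, the Reeb field of $\alpha$ is the field $R$ with $\alpha(R) = 1$, namely $R = X/|X|^2$; thus $X$ is positively proportional to a Reeb field and shares its oriented field lines. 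When $|X|_g \equiv 1$ this proportionality is trivial and $X$ is exactly the Reeb field of $X^\flat$, and in general one reduces to this normalized case (equivalently, the forward construction already supplies a metric with $|X| = 1$ and $\curl X = X$).

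I expect the main obstacle to be precisely this normalization. The Reeb condition fixes $\alpha(X) = 1$, whereas the naive contact form $X^\flat$ gives $\alpha(X) = |X|^2$, and rescaling to $X^\flat/|X|^2$ destroys flow-invariance $\iota_X d\alpha = 0$ unless $\grad|X|^2$ is parallel to $X$. Reconciling the length normalization of $X$ with the Reeb normalization of $\alpha$ — together with verifying that $X$ is nowhere vanishing, so that $X^\flat\wedge(dX^\flat)^n$ is a genuine volume form — is the crux; the forward direction circumvents it by producing a metric in which $|X| = 1$ and the proportionality factor is identically $1$.
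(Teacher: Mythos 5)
Your forward direction is correct, and it is essentially the construction in the source the paper cites for this statement (the paper gives no proof of \cref{thm:ReebIsForceFreeWithNonVanishingF} itself; it quotes Etnyre--Ghrist \cite{Etnyre:2000:CTH}): making \(X\) unit, orthogonal to \(\Xi=\ker\alpha\), and choosing the metric on \(\Xi\) whose volume form is \((d\alpha)^n\vert_\Xi\) gives \(X^\flat=\alpha\), \(\mu=\alpha\wedge(d\alpha)^n\), \(\iota_X\mu=(d\alpha)^n=(dX^\flat)^n\), hence \(\curl X=X\) with \(\lambda\equiv1\), a closed flux form \(\iota_X\mu\), and force-freeness is exactly the Reeb condition. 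Your converse computations are also correct as far as they go: for nowhere vanishing \(X\) with \(\iota_XdX^\flat=0\) and \(\curl X=\lambda X\), \(\lambda\neq 0\), the form \(\alpha=X^\flat\) is contact and \(X=|X|^2R\), where \(R\) is the Reeb field of \(\alpha\).

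However, the normalization obstacle you flag at the end is not a technicality you failed to resolve; it is a genuine obstruction, and the ``only if'' direction is false as literally stated. Concretely, on \(T^3=(\RR/2\pi\ZZ)^3\) take \(\alpha=\cos z\,dx+\sin z\,dy\) with Reeb field \(R=\cos z\,\partial_x+\sin z\,\partial_y\), and set \(X=f(x)R\) for a smooth positive \emph{non-constant} \(f\). The metric determined by \(X\perp\ker\alpha\), \(|X|_g^2=f\), and \(g\vert_{\ker\alpha}=f^{-1/2}d\alpha(\cdot,J\cdot)\) satisfies \(X^\flat=\alpha\) and (for a suitable orientation) \(\mu=f^{-1}\alpha\wedge d\alpha\), so \(\iota_X\mu=d\alpha\) is closed and \(\curl X=X\): thus \(X\) is force-free with non-vanishing proportionality factor \(\lambda\equiv 1\). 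Yet there is \emph{no} \(1\)-form \(\beta\) with \(\beta(X)=1\) and \(\iota_Xd\beta=0\), contact or not: on the invariant torus \(\{z=\pi/2\}\) one has \(X=f(x)\partial_y\), whose orbits \(\gamma_{x_0}\) are closed of period \(2\pi/f(x_0)\), so \(\oint_{\gamma_{x_0}}\beta=2\pi/f(x_0)\); but the annulus in this torus bounded by \(\gamma_{x_0}\) and \(\gamma_{x_1}\) is tangent to \(X\), so \(\iota_Xd\beta=0\) makes \(d\beta\) vanish on it, and Stokes forces \(2\pi/f(x_1)=2\pi/f(x_0)\), i.e.\ \(f\) constant---a contradiction. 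Hence what your argument proves is exactly the true statement, which is how \cite{Etnyre:2000:CTH} phrase it: \(X\) is a \emph{Reeb-like} field (a positive rescaling of a Reeb field). To conclude that \(X\) \emph{is} the Reeb field one must add the hypothesis \(|X|_g=1\) on the force-free side, under which your converse closes immediately since then \(\alpha(X)=|X|^2=1\); the paper's sentence after the theorem (``in fact, \(X\) is of unit length with respect to the relevant metric'') is precisely this hypothesis doing silent work in the statement. In short: your proof is complete for the corrected statement, and the crux you identified is a defect of the statement, not of your argument.
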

In fact, \(X\) is of unit length with respect to the relevant metric. Hence, as a consequence of \corref{thm:UnitBetaBeltramiGeodesic} we conclude
\begin{corollary}
    In the setting of \cref{thm:ReebIsForceFreeWithNonVanishingF} the vector field \(X\) it is moreover geodesic with respect to said metric \(g\).
\end{corollary}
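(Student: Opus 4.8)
The plan is to read this as a one-line consequence of chaining the two cited results, with the only genuine content being the unit-length observation that the paper has already flagged. By \cref{thm:ReebIsForceFreeWithNonVanishingF}, since \(X\) is a Reeb vector field of a contact structure, there is a metric \(g\) for which \(X\) is force-free (with non-vanishing proportionality factor). Passing to the flux-form picture by setting \(\beta = \iota_X\mu\), I would note that \(|\beta|_g = |X|_g\), so the hypothesis \(|\beta|=1\) of \cref{thm:UnitBetaBeltramiGeodesic} is exactly the assertion \(|X|_g = 1\).

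Thus the crux is to justify that the metric delivered by \cref{thm:ReebIsForceFreeWithNonVanishingF} can be taken so that \(X\) has unit length. I would extract this from the Etnyre--Ghrist construction underlying that theorem: given the contact form \(\alpha\) whose Reeb field is \(X\), one chooses an almost-complex structure \(J\) on the contact distribution \(\ker\alpha\) compatible with \(d\alpha\), and defines \(g\) by declaring \(X\) orthogonal to \(\ker\alpha\) with \(|X|_g=1\) and \(g|_{\ker\alpha}(\cdot,\cdot) = d\alpha(\cdot, J\cdot)\). For this \(g\) one has \(\alpha = X^\flat\), so the defining Reeb conditions \(\alpha(X)=1\) and \(\iota_X d\alpha = 0\) translate directly into \(|X|_g = 1\) and \(\iota_X dX^\flat = 0\), the latter being the force-free condition of \cref{def:Beltrami} via \cref{thm:WadsleyLemma}.

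With \(|X|_g = 1\) in hand, the conclusion is immediate: \cref{thm:UnitBetaBeltramiGeodesic} asserts that for a unit-length flux form force-freeness and the geodesic property coincide, and since \(X\) is force-free with respect to \(g\), it is therefore geodesic with respect to \(g\). The main obstacle is not analytic but a matter of bookkeeping: one must ensure the metric whose existence is asserted in \cref{thm:ReebIsForceFreeWithNonVanishingF} is precisely the unit-normalizing metric just described, rather than some rescaled representative. Once the normalization \(\alpha = X^\flat\) is fixed, the unit-length property and the geodesic conclusion follow automatically.
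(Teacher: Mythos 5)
Your proposal is correct and follows essentially the same route as the paper: the paper likewise notes that the metric produced by \cref{thm:ReebIsForceFreeWithNonVanishingF} makes \(X\) of unit length and then invokes \cref{thm:UnitBetaBeltramiGeodesic} to conclude that force-free and geodesic coincide. The only difference is that you spell out the Etnyre--Ghrist construction (\(\alpha = X^\flat\), compatible \(J\) on \(\ker\alpha\)) to justify the unit-length claim, which the paper simply asserts.
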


\subsection{Killing Vector Fields}
Let \(M\) be an \(n\)-dimensional Riemannian manifold with Riemannian metric \(g\). Then a vector field which generates an isometric flow, \ie, an infinitesimal isometry of \(M\) is called a \emph{Killing vector field} (\cref{fig:KillingVector Fields}). Examples include, \eg, vector fields associated to rigid body transformations in \(\RR^n\).
\begin{definition}
    \label{def:Killing}
    A vector field \(B\in\Gamma TM\) on a Riemannian manifold \(M\) is called a \emph{Killing vector field} if \[\LD_Bg=0.\]
    A flux form \(\beta\in\Omega^{n-1}(M)\) is called \emph{Killing} if the associated vector field is a Killing vector field.
\end{definition}

\begin{figure}[h]
   \centering
   \includegraphics[width=.45\columnwidth]{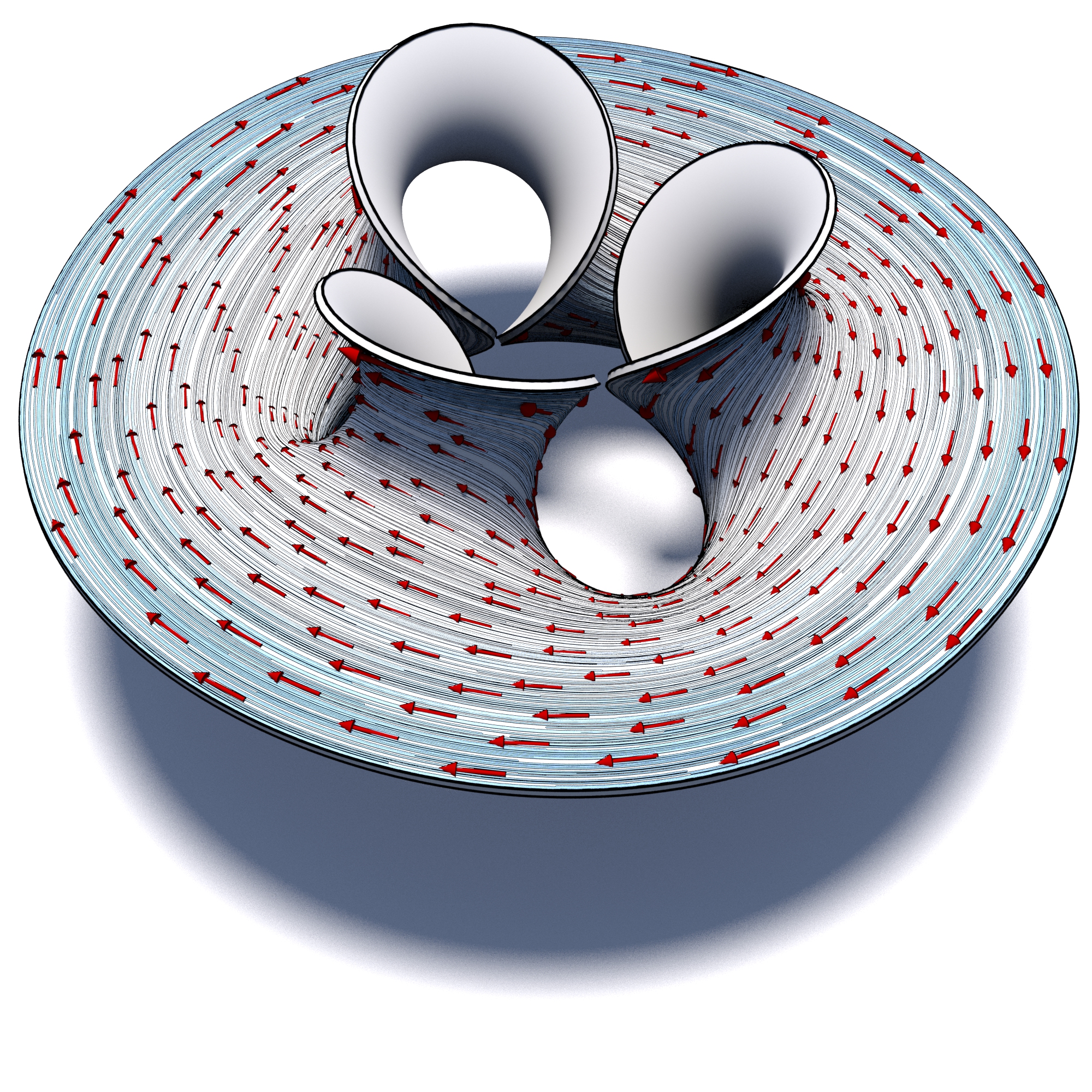}
   \includegraphics[width=.45\columnwidth]{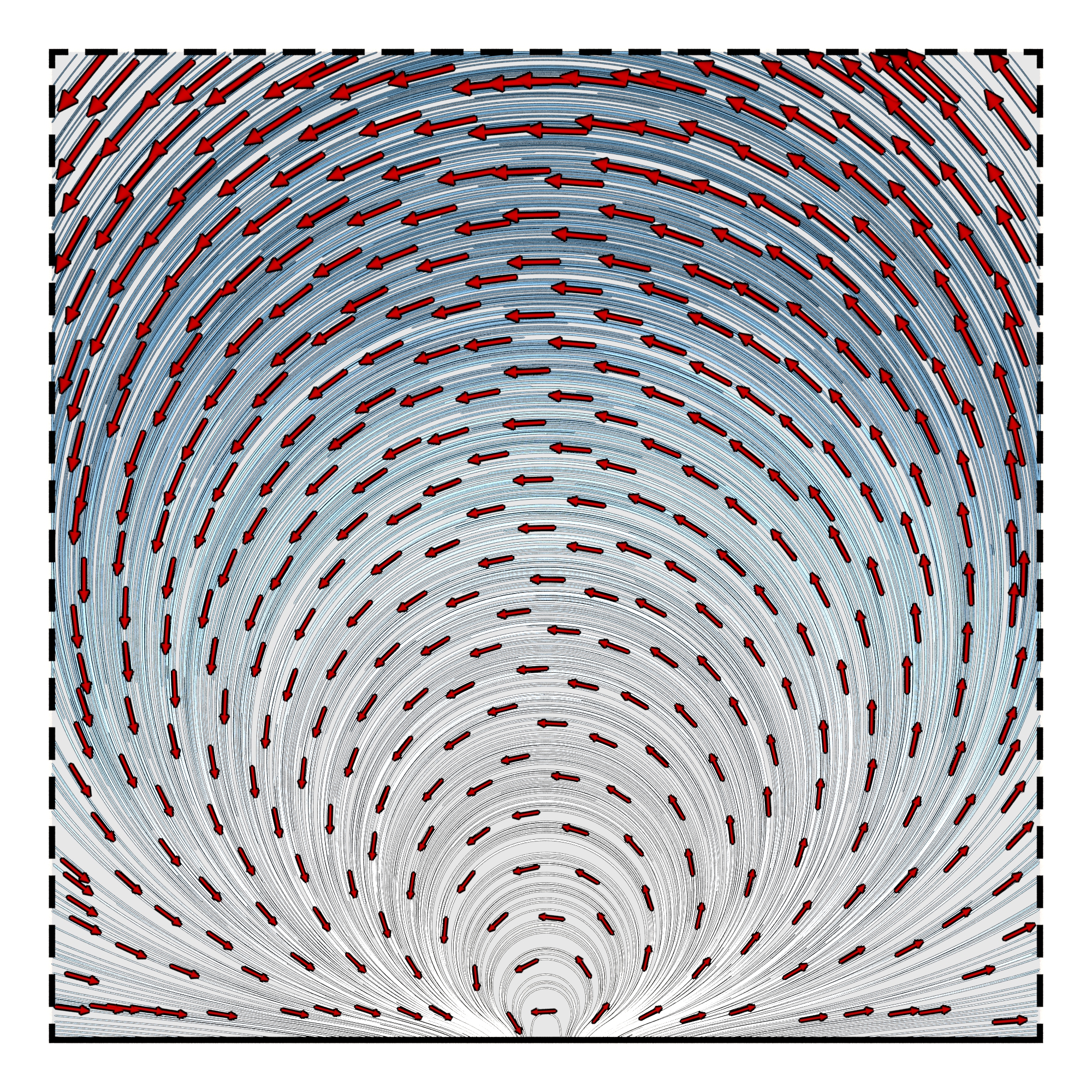}
   \caption{\label{fig:KillingVector Fields}A Killing field and the associated flow lines on an \emph{Enneper surface} (left) and on a piece of the hyperbolic plane in the upper half-plane model (right).}
\end{figure}
\begin{proposition}
    \label{thm:prop}
    A vector field \(B\in\Gamma TM\) on a Riemannian manifold \(M\) is a Killing vector field if and only if for \(Y,Z\in\Gamma TM\), \[g(\nabla_YB,Z)=-g(Y,\nabla_ZB).\]
\end{proposition}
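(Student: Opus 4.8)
The plan is to establish the pointwise identity
\begin{align}
    \label{eq:LieDerivativeIdentity}
    (\LD_Bg)(Y,Z) = g(\nabla_YB,Z) + g(Y,\nabla_ZB)
\end{align}
for all \(Y,Z\in\Gamma TM\), from which the stated equivalence is immediate: the Killing condition \(\LD_Bg=0\) holds if and only if the right-hand side of \cref{eq:LieDerivativeIdentity} vanishes identically, and moving one summand across the equality yields \(g(\nabla_YB,Z) = -g(Y,\nabla_ZB)\).

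To derive \cref{eq:LieDerivativeIdentity}, I would start from the standard expression for the Lie derivative of a \((0,2)\)-tensor,
\[
(\LD_Bg)(Y,Z) = B\bigl(g(Y,Z)\bigr) - g([B,Y],Z) - g(Y,[B,Z]),
\]
and then invoke the two defining properties of the Levi-Civita connection. First, metric compatibility \(\nabla g = 0\) rewrites the derivative of the inner product as \(B(g(Y,Z)) = g(\nabla_BY,Z) + g(Y,\nabla_BZ)\). Second, torsion-freeness gives \([B,Y] = \nabla_BY - \nabla_YB\) and \([B,Z] = \nabla_BZ - \nabla_ZB\). Substituting these into the three terms and observing that all occurrences of \(\nabla_BY\) and \(\nabla_BZ\) cancel in pairs leaves exactly \(g(\nabla_YB,Z) + g(Y,\nabla_ZB)\), which is \cref{eq:LieDerivativeIdentity}.

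The computation is essentially bookkeeping once metric compatibility and torsion-freeness are brought to bear, so there is no genuine obstacle here; the only point requiring care is tracking the cancellation of the \(\nabla_BY\) and \(\nabla_BZ\) terms correctly, since a sign slip there would corrupt the symmetric structure on the right-hand side. Because both properties of \(\nabla\) are pointwise (tensorial in the appropriate arguments), the resulting identity holds pointwise, and the equivalence of \(\LD_Bg=0\) with the antisymmetry relation \(g(\nabla_YB,Z) = -g(Y,\nabla_ZB)\) follows for all \(Y,Z\) as claimed.
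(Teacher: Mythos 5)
Your proof is correct. The paper states this proposition without proof (it is the standard characterization of Killing fields), and your argument---expanding \((\LD_Bg)(Y,Z)\) via the Lie-derivative formula for \((0,2)\)-tensors and then using metric compatibility and torsion-freeness of the Levi-Civita connection to cancel the \(\nabla_BY\) and \(\nabla_BZ\) terms---is exactly the canonical derivation, with the sign bookkeeping checking out.
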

Note that \cref{thm:UnitBetaBeltramiGeodesic} is true as long as the flux form \(\beta\)  has constant (not necessarily unit) norm, which shows that flux forms of constant norm are special. In this section we derive even more interesting consequences of the constancy of the norm. On Riemannian manifolds, Killing vector fields of constant length are known to be related to geodesic foliations~\cite{berestovskii2008killing}. 
\begin{lemma}
    \label{thm:ConstKillingIffGeodesic}
    A Killing vector field \(B\in\Gamma TM\) on a Riemannian manifold \(M\) has constant length with respect to the metric \(g\) if and only if it is geodesic with respect to \(g\).
\end{lemma}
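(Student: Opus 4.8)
The plan is to reduce both implications to the single pointwise identity
\[
  \nabla_B B = -\tfrac12\grad|B|^2,
\]
which holds for \emph{every} Killing field and already contains the full content of the lemma. To derive it I would start from the skew-symmetry of the covariant derivative recorded in \propref{thm:prop}: since \(\tfrac12 d|B|^2(Y) = g(\nabla_Y B, B)\) by metric compatibility, applying \(g(\nabla_Y B, B) = -g(Y, \nabla_B B)\) gives \(\tfrac12 d|B|^2 = -(\nabla_B B)^\flat\), which is the claimed identity. (Equivalently, one may feed \(\iota_B dB^\flat = 2(\nabla_B B)^\flat\)---another immediate consequence of skew-symmetry---into Wadsley's formula \lemref{thm:WadsleyLemma}.) As a byproduct, pairing the identity with \(B\) yields \(g(\nabla_B B, B)=0\), so \(|B|^2\) is automatically constant along the flow of any Killing field; this observation is the key to the nontrivial direction.

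For the forward implication, suppose \(|B|\) is constant. Then \(\grad|B|^2 = 0\), so the identity gives \(\nabla_B B = 0\); in particular \(B\) is geodesic in the sense of \cref{def:GeodesicVF} with \(\rho = 0\).

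For the converse, suppose \(B\) is geodesic, so \(\nabla_B B = \rho B\) for some \(\rho\in C^\infty(M)\). Substituting into the identity gives \(\rho B = -\tfrac12\grad|B|^2\). Pairing with \(B\) and using \(g(\nabla_B B, B)=0\) produces \(\rho|B|^2 = g(\rho B, B) = g(\nabla_B B, B) = 0\). Hence \(\rho|B|^2 \equiv 0\), which forces \(\rho B \equiv 0\) (at each point either \(B=0\) or \(\rho=0\)), so that \(\nabla_B B = \rho B = 0\) everywhere. Feeding this back into the identity yields \(\grad|B|^2 = 0\), and therefore \(|B|\) is constant on each connected component of \(M\).

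The computation is short, so the only step I would watch most carefully is the bookkeeping at the zero set of \(B\) in the converse. Rather than splitting into the support of \(B\) and its complement and then arguing by continuity across the zero set, the argument sidesteps this entirely by first deriving \(\rho|B|^2=0\) and then \(\rho B = 0\), both of which hold pointwise irrespective of whether \(B\) vanishes; this is precisely what lets me conclude \(\nabla_B B = 0\) globally and hence constancy of \(|B|\).
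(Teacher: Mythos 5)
Your proposal is correct and takes essentially the same route as the paper: both rest on the single identity \(dg(B,B) = -2(\nabla_BB)^\flat\), obtained from the skew-symmetry in \propref{thm:prop} together with metric compatibility, and both implications are read off from it. The only difference is that you additionally handle the weaker reading of ``geodesic'' (\(\nabla_BB=\rho B\), cf.\ \cref{def:GeodesicVF}) by deriving \(\rho|B|^2\equiv 0\) and hence \(\rho B\equiv 0\) pointwise across the zero set of \(B\) -- a careful refinement that the paper's one-line proof leaves implicit.
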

\begin{proof}
    By \cref{thm:prop},
    \begin{align}
        \label{eq:ConstKillingIffGeodesic}
        dg(B,B) = 2g(\nabla B,B) = -2(\nabla_BB)^\flat,
    \end{align}
    from which the claim immediately follows.
\end{proof}

\begin{remark}
Conditions on the curvature of the manifold \(M\) need to be satisfied for the converse statement of \cref{thm:ConstKillingIffGeodesic}, that is for when a geodesic vector field of constant length is Killing, are given in \cite{Deshmukh:2019:GVF}.
\end{remark}

The Hopf field (\cref{fig:HopfFibration}) also serves as an example for \cref{thm:ConstKillingIffGeodesic}. With its unit norm geodesic field lines, it is not only force-free (\cref{thm:UnitBetaBeltramiGeodesic}), but also a Killing vector field on \(S^3\). We can use \cref{thm:ConstKillingIffGeodesic} to show that---even without a constant norm---Killing vector fields are in fact conformally geodesic vector fields.
\begin{theorem}\label{thm:KillingImpliesUnitGeodesic}
	Let \(M\) be an \(n\)-dimensional manifold with Riemannian metric \(g\) and \(B\in\Gamma TM\) a Killing vector field. Then there is a Riemannian metric \(h\in[g]\) such that \(B\) is geodesic and of unit length.
\end{theorem}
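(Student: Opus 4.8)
The plan is to use the conformal factor that normalizes \(B\) directly, and then to establish geodesy by a single contraction identity rather than by attempting to preserve the Killing property. I work on the support \(\{B\neq 0\}\), where a non-degenerate conformal factor exists (the usual support caveat of the paper), and set
\[
    h\coloneqq |B|_g^{-2}\,g\ \in[g].
\]
By construction \(|B|_h^2 = h(B,B) = |B|_g^{-2}\,g(B,B) = 1\), so \(B\) is automatically of unit length with respect to \(h\). The entire content of the theorem is thereby reduced to showing that \(B\) is geodesic with respect to \(h\).

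For that reduction I would exploit the fact that unit length makes the geodesic and force-free conditions coincide. Since \(|B|_h=1\), \cref{thm:UnitBetaBeltramiGeodesic} (applied to the unit-length flux form \(\star_h B^\flat\)) tells me that \(B\) is geodesic with respect to \(h\) if and only if it is force-free with respect to \(h\), i.e.\ if and only if \(\iota_B\,dB^{\flat_h}=0\). Equivalently, one may apply \cref{thm:WadsleyLemma} on \((M,h)\): constancy of \(|B|_h\) annihilates the \(\tfrac12 d|B|_h^2\) term, so that \(\iota_B\,dB^{\flat_h} = (\nabla^h_B B)^{\flat_h}\), and a unit-length pre-geodesic field necessarily has vanishing acceleration. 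Either route turns the task into the single verification \(\iota_B\,dB^{\flat_h}=0\).

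The Killing hypothesis enters precisely in this computation. Writing \(\phi\coloneqq|B|_g^2\) and \(\alpha\coloneqq B^\flat\) for the \(g\)-musical dual, the conformal transformation law gives \(B^{\flat_h}=\phi^{-1}\alpha\), hence \(dB^{\flat_h} = -\phi^{-2}\,d\phi\wedge\alpha + \phi^{-1}\,d\alpha\). Contracting with \(B\) and using \(\iota_B\alpha=\phi\), I invoke the two consequences of \cref{thm:prop} (antisymmetry of \(\nabla B\)): first \(B(\phi)=2g(\nabla_B B,B)=0\), and second \((\nabla^g_B B)^\flat=-\tfrac12 d\phi\), the latter combined with \cref{thm:WadsleyLemma} on \((M,g)\) yielding \(\iota_B\,d\alpha = -d\phi\). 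The two surviving terms are then \(+\phi^{-1}d\phi\) and \(-\phi^{-1}d\phi\), which cancel, so \(\iota_B\,dB^{\flat_h}=0\) and \(B\) is \(h\)-geodesic.

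The main point to flag is conceptual rather than computational: \(B\) need \emph{not} be Killing for \(h\), so \cref{thm:ConstKillingIffGeodesic} cannot simply be transported to the new metric, and the theorem is genuinely nontrivial for that reason. The essential mechanism is that the \(g\)-Killing property supplies exactly the cancellation that forces \(B\) to be geodesic after the normalizing rescaling. Beyond this, the only items requiring care are the conformal transformation rule for the musical isomorphism together with the accompanying sign bookkeeping (which I expect to be the most error-prone part), and the restriction to \(\{B\neq 0\}\) needed to keep \(h\) non-degenerate.
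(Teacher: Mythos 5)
Your computation is correct and yields a valid proof. With \(h=|B|_g^{-2}\,g=\phi^{-1}g\) the unit-length claim is immediate, and your contraction identity checks out: \cref{thm:prop} gives both \(B(\phi)=2g(\nabla_BB,B)=0\) and \((\nabla^g_BB)^\flat=-\tfrac12 d\phi\), so \cref{thm:WadsleyLemma} on \((M,g)\) gives \(\iota_Bd\alpha=-d\phi\), and in
\[
\iota_B\,dB^{\flat_h}=-\phi^{-2}\bigl((B\phi)\,\alpha-\phi\,d\phi\bigr)+\phi^{-1}\iota_Bd\alpha=\phi^{-1}d\phi-\phi^{-1}d\phi=0
\]
the surviving terms cancel as you say. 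Either of your closing routes (\cref{thm:UnitBetaBeltramiGeodesic} for the unit-norm flux form, or \cref{thm:WadsleyLemma} again on \((M,h)\), where \(d|B|_h^2=0\) forces \((\nabla^h_BB)^{\flat_h}=0\)) then legitimately converts this into geodesy. This is a genuinely different argument from the paper's: the paper never differentiates \(B^{\flat_h}\) at all and stays entirely at the level of Lie derivatives. Your version buys independence from \cref{thm:ConstKillingIffGeodesic} at the cost of the musical-isomorphism bookkeeping, which you carried out correctly.

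However, your ``main point to flag'' is false, and it is the exact opposite of what the paper does. You claim that \(B\) need not be Killing for \(h\), so that \cref{thm:ConstKillingIffGeodesic} cannot be transported to the new metric. In fact \(B\) \emph{is} Killing for \(h\): the fact \(B(\phi)=0\) that you yourself establish says precisely that the conformal factor \(\phi^{-1}\) is constant along the flow of \(B\), whence
\[
\LD_Bh=\LD_B(\phi^{-1}g)=\bigl(d_B\phi^{-1}\bigr)g+\phi^{-1}\LD_Bg=0.
\]
This is the paper's entire proof: \(B\) is a unit-length Killing field for \(h\), and \cref{thm:ConstKillingIffGeodesic} immediately gives that it is geodesic. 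So the Killing property is stable under rescaling by any flow-invariant conformal factor, and \(|B|_g^{-2}\) is flow-invariant exactly because \(B\) is \(g\)-Killing; the cancellation you observed in coordinates is the shadow of this structural fact. Your proof stands, but the remark asserting the nontriviality of the theorem on these grounds should be deleted or corrected.
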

\begin{proof}
	From \cref{thm:prop} we conclude that \(g(\nabla_BB,B)=0\). Define \(h\coloneqq e^{-2u}\,g\) for \(e^{2u}\coloneqq g(B,B)\), then \[d_Be^{-2u} = -2g(B,B)^{-2}g(\nabla_BB,B) = 0.\]
	In particular, 
	\[\LD_Bh = \LD_B(e^{-2u}g) = d_Be^{-2u}g + e^{-2u}\LD_Bg = 0.\]  
	Therefore, \(B\) is also a Killing vector field with respect to \(h\) and in particular, \(h(B,B)= \frac{1}{g(B,B)}g(B,B)=1\). The claim now follows from \cref{thm:ConstKillingIffGeodesic}.
\end{proof}
\begin{corollary}
    Let \(M\) be an \(n\)-dimensional manifold with Riemannian metric \(g\) and \(B\in\Gamma TM\) a Killing vector field. Then there is a Riemannian metric \(h\in[g]\) such that \(B\) is force-free.
\end{corollary}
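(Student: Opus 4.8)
The plan is to chain together the two results immediately preceding this corollary, since each half of the desired conclusion has already been isolated. The starting observation is that \cref{thm:KillingImpliesUnitGeodesic} does more than produce a conformal metric in which \(B\) is geodesic: it produces one in which \(B\) is \emph{simultaneously} geodesic and of unit length. This simultaneity is exactly what the passage from geodesic to force-free will require.

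Concretely, I would first invoke \cref{thm:KillingImpliesUnitGeodesic} to obtain a metric \(h = e^{-2u}g \in [g]\), with \(e^{2u} = g(B,B)\), for which \(B\) is geodesic and satisfies \(|B|_h = 1\). Next, I would translate to the language of flux forms: letting \(\beta = \star B^\flat\) be the flux form associated to \(B\) with respect to \(h\) (so that \(\star\) and \(\flat\) are taken relative to \(h\)), the identity \(|\beta|_h = |B|_h\) shows that \(\beta\) is a unit-norm flux form. At this point the hypothesis of \cref{thm:UnitBetaBeltramiGeodesic} is met.

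Finally, I would apply \cref{thm:UnitBetaBeltramiGeodesic}, which asserts that for a unit-norm flux form, being force-free is \emph{equivalent} to being geodesic. Since \(\beta\) is geodesic with respect to \(h\) by the first step, it is therefore force-free with respect to \(h\), and \(h \in [g]\) is the desired metric.

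There is essentially no technical obstacle here; the content of the statement is entirely front-loaded into \cref{thm:KillingImpliesUnitGeodesic} and \cref{thm:UnitBetaBeltramiGeodesic}. The only point that deserves a moment's care is that both the unit-length normalization and the geodesic property must hold with respect to the \emph{same} representative \(h\); this is guaranteed because \cref{thm:KillingImpliesUnitGeodesic} delivers both conclusions for a single metric, so no further adjustment of the conformal factor is needed.
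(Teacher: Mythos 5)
Your overall strategy is the same as the paper's---chain \cref{thm:KillingImpliesUnitGeodesic} into \cref{thm:UnitBetaBeltramiGeodesic}---but there is a genuine gap: you never verify that the flux form \(\beta = \iota_B\mu_h\) associated to \(B\) via the metric \(h\) is \emph{closed}. In this paper ``flux form'' means closed \((n-1)\)-form, the notion of force-free (\cref{def:Beltrami}) is only defined for closed flux forms (closedness of \(\beta\) is exactly the divergence-free condition \(\div B = 0\) that is part of the classical definition of a force-free field), and \cref{thm:UnitBetaBeltramiGeodesic} is a corollary of \cref{thm:MainTheorem1Formal}, whose hypotheses likewise concern closed flux forms. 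So when you write ``at this point the hypothesis of \cref{thm:UnitBetaBeltramiGeodesic} is met,'' you have only checked the unit-norm hypothesis, not the closedness one; without it the conclusion ``\(\beta\) is force-free'' is not even well-posed.

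The gap is easily filled, and the paper's own proof does exactly this: since the proof of \cref{thm:KillingImpliesUnitGeodesic} shows \(\LD_B h = 0\), the field \(B\) is Killing with respect to \(h\) as well, hence its flow preserves the volume form \(\mu_h\) induced by \(h\). By Cartan's formula (using \(d\mu_h = 0\)) this gives
\begin{align*}
    d\beta = d\iota_B\mu_h = \LD_B\mu_h = 0,
\end{align*}
so \(\beta\) is a closed, unit-norm, geodesic flux form with respect to \(h\), and only then does \cref{thm:UnitBetaBeltramiGeodesic} yield that \(\beta\) is force-free. Note also that volume preservation with respect to the \emph{original} metric \(g\) would not suffice; it must hold for \(h\), which is why the observation that \(B\) remains Killing after the conformal change is the key point.
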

\begin{proof}
    By \cref{thm:KillingImpliesUnitGeodesic} there is a conformally equivalent metric \(h\) on \(M\) with respect to which \(B\) is geodesic and of unit length. We note that, since the flow induced by a Killing field \(B\) preserves \(h\) the same holds true for the induced volume form, \ie, \(B\) is volume preserving with respect to the volume form induced by \(h\). Then, by \cref{thm:UnitBetaBeltramiGeodesic} \(B\) is the force-free.
\end{proof}

\section*{Acknowledgments}
This work was funded by the Deutsche Forschungsgemeinschaft (DFG - German Research Foundation) - Project-ID 195170736 - TRR109 ``Discretization in Geometry and Dynamics'' and the National Science Foundation - CAREER Award 223906.
Additional support was provided by SideFX software.
The research was conducted during a visiting stay of the second author at California Institute of Technology hosted by Prof.\@ Peter Schr\"oder.
The authors would also like to thank Prof.\@ Ulrich Pinkall, Dr.\@ Felix Kn\"oppel, Mark Gillespie and Sadashige Ishida for initial discussions.

\bibliographystyle{siamplain}
\bibliography{references}

\end{document}